\DeclareMathAlphabet{\pazocal}{OMS}{zplm}{m}{n}
\newtheorem{theorem}{Theorem}[section]
\newtheorem{lemma}[theorem]{Lemma}
\newtheorem{proposition}[theorem]{Proposition}
\theoremstyle{definition}
\theoremstyle{remark}
\newtheorem{remark}[theorem]{Remark}
\numberwithin{equation}{section}
\newcommand{\N}{\ensuremath{\mathbb{N}}}
\renewcommand{\c}{ {\mathbf{c}}}
\renewcommand{\d}{ {\mathbf{d}}}
\newcommand{\us}{\mathbf{U}}
\newcommand{\ub}{\mathcal{U}}
\newcommand{\uu}{\pazocal{U}}
\newcommand{\vb} {\mathcal{V}}
\newcommand{\set}[1]{\left\{#1\right\}}
\newcommand{\ep}{\varepsilon}
\newcommand{\f}{\infty}
\newcommand{\de}{\delta}
\newcommand{\al}{\alpha}
\newcommand{\si}{\sigma}
\newcommand{\ra}{\rightarrow}
\newcommand{\lle}{\preccurlyeq}
\newcommand{\lge}{\succcurlyeq}
\begin{document}

\title[Univoque bases of real numbers]{Univoque bases of real numbers:  local dimension, Devil's staircase and isolated points}

\author[D. Kong]{Derong Kong}
\address[D. Kong]{College  of Mathematics and Statistics, Chongqing University, Chongqing 401331, People's Republic of China.}
\email{derongkong@126.com}

\author[W. Li]{Wenxia Li}
\address[W. Li]{Department of Mathematics, Shanghai Key Laboratory of PMMP, East China Normal University, Shanghai 200062,
People's Republic of China}
\email{wxli@math.ecnu.edu.cn}

\author[F. L\"u]{Fan L\"{u} }
\address[F. L\"u]{Department of Mathematics, Sichuan Normal University, Chengdu 610068, People's Republic of China}
\email{lvfan1123@163.com}

\author[Z. Wang]{Zhiqiang Wang}
\address[Z. Wang]{Department of Mathematics, Shanghai Key Laboratory of PMMP, East China Normal University, Shanghai 200062,
People's Republic of China}
\email{zhiqiangwzy@163.com}

\author[J. Xu]{Jiayi Xu}
\address[J. Xu]{Department of Mathematics, Shanghai Key Laboratory of PMMP, East China Normal University, Shanghai 200062,
People's Republic of China}
\email{dkxujy@163.com}

\dedicatory{}


\subjclass[2010]{Primary: 11A63, Secondary: 37B10, 26A30, 28A80, 68R15}

\begin{abstract}
Given a positive integer $M$ and a real number $x>0$, let $\ub(x)$ be the set of all bases $q\in(1, M+1]$ for which there exists a unique sequence $(d_i)=d_1d_2\ldots$ with each digit $d_i\in\set{0,1,\ldots, M}$ satisfying
\[
x=\sum_{i=1}^\f\frac{d_i}{q^i}.
\]
The sequence $(d_i)$ is called a \emph{$q$-expansion} of $x$.
In this paper we investigate the local dimension of $\ub(x)$ and prove a `variation principle' for unique non-integer base expansions. We also determine the critical values of $\ub(x)$  such that when $x$ passes the first critical value the set $\ub(x)$ changes from a set with positive Hausdorff dimension  to  a countable set, and when $x$ passes the second critical value the set $\ub(x)$ changes from an infinite set to a singleton. Denote by $\us(x)$ the set of all unique $q$-expansions of $x$ for  $q\in\ub(x)$. We give the Hausdorff dimension of $\us(x)$ and show that the dimensional function $x\mapsto\dim_H\us(x)$ is a non-increasing Devil's staircase. Finally, we investigate the topological structure of $\ub(x)$. Although the set  $\ub(1)$ has no isolated points, we prove that for typical $x>0$ the set $\ub(x)$ contains isolated points.
\end{abstract}
\keywords{univoque bases,   Hausdorff dimension, Devil's staircase, critical values, isolated points.}
\maketitle

\section{Introduction}\label{sec: Introduction}

Given a positive integer $M$ and a real number $q\in(1, M+1]$, each point $x\in[0, M/(q-1)]$ can be written as
\begin{equation}\label{eq:pi-q}
x=\pi_q((d_i)):=\sum_{i=1}^\f\frac{d_i}{q^i},\quad d_i\in\set{0, 1,\ldots, M}~\forall i\ge 1.
\end{equation}
The infinite sequence $(d_i)=d_1d_2\cdots$   is called a $q$-\emph{expansion} of $x$ with respect to the \emph{alphabet} $\{0,1,\cdots,M\}$.

Expansions in non-integer bases were pioneered by R\'{e}nyi \cite{Renyi_1957} and Parry \cite{Parry_1960}. Different from the integer base expansions Sidorov \cite{Sidorov_2003} (see also \cite{Dajani_DeVries_2007}) showed that for any $q\in(1, M+1)$ Lebesgue almost every $x\in[0, M/(q-1)]$ has  a continuum of $q$-expansions.  Furthermore, Erd\H os et al.~\cite{Erdos_Joo_Komornik_1990, Erdos_Horvath_Joo_1991, Erdos_Joo_1992} showed that for any  $k\in\N\cup\set{\aleph_0}$ there exist $q\in(1, M+1)$ and $x\in[0, M/(q-1)]$ such that $x$ has precisely  $k$ different  $q$-expansions  (see also cf.~\cite{  Sidorov_2009}). In particular, there is a great interest in unique $q$-expansions due to {their} close connections with open dynamical systems (cf.~\cite{DeVries_Komornik_2008, Glendinning_Sidorov_2001, Komornik-Kong-Li-17}). For more information on expansions in  non-integer bases we refer to the surveys \cite{ Komornik_2011, Sidorov_2003_survey} and the survey chapter \cite{deVries-Komornik-2016}.

For $q>1$ let $\uu_q$ be  the \emph{univoque set} of $x\in I_q:=[0,M/(q-1)]$ having a unique $q$-expansion, and let $\us_q:=\pi_q^{-1}(\uu_q)$ be the set of corresponding $q$-expansions.
Dual to the univoque set $\uu_q$  we consider in this paper the set of \emph{univoque bases} of real numbers.
For $x\ge 0$  let $\ub(x)$ be the set of bases $q\in(1, M+1]$ such that $x$ has a unique $q$-expansion, i.e.,
 \[
 \ub(x)=\set{q\in(1, M+1]: x\in\uu_q}.
 \]
Clearly, for $x=0$ the set $\ub(0)=(1,M+1]$, because  for each $q\in(1, M+1]$ the point  $0$ always has a unique $q$-expansion $0^\f=00\cdots$.  So, it is interesting to investigate the set $\ub(x)$ for $x>0$.

When $x=1$, the set $\ub=\ub(1)$  is well understood. Erd\H os et al.~\cite{Erdos_Joo_Komornik_1990} showed  that   $\ub$ is a  Lebesgue null set of first category but it is uncountable. Later Dar\'oczy and K\'atai \cite{Darczy_Katai_1995} showed that $\ub$ has full Hausdorff dimension. Clearly,  the largest element of $\ub$ is $M+1$ since $1$ has the unique expansion  $M^\f=MM\cdots$ in base $M+1$. Komornik and Loreti \cite{Komornik-Loreti-1998, Komornik_Loreti_2002} found the smallest element $q_{KL}=q_{KL}(M)$ of $\ub$, which was  called  the \emph{Komornik-Loreti constant} by Glendinning and Sidorov \cite{Glendinning_Sidorov_2001}. Furthermore, they showed in \cite{Komornik_Loreti_2007} that
 its topological closure $\overline{\ub}$ is a {Cantor set}: a non-empty compact set with neither isolated nor interior points. Hence,
 \begin{equation}\label{eq:11}
(1,M+1]\setminus\overline{\ub}=\bigcup(q_0,q_0^*),
\end{equation}
where  the left endpoints $q_0$ run through  $1$ and  the set $\overline{\ub}\setminus\ub$, and the right endpoints $q_0^*$ run through   a   subset $\ub^*$ of $\ub$ (cf.~\cite{DeVries_Komornik_2008}). In particular, each left endpoint $q_0$ is algebraic, while each right endpoint $q_0^*$, called a \emph{de Vries-Komornik number},  is transcendental (cf.~\cite{Kong_Li_2015}). Recently, Kalle et al.~\cite{Kalle-Kong-Li-Lv-2019} showed that the set $\ub$ has more weight  close to $M+1$. For the detailed description of the local structure of $\ub$ we refer to the recent paper \cite{Allaart-Kong-2018}.

However, for a general $x>0$ we know very little about $\ub(x)$. L\"u, Tan and Wu \cite{Lu_Tan_Wu_2014} showed that for $M=1$ and $x\in(0, 1)$ the set $\ub(x)$ is a Lebesgue null set but {has} full Hausdorff dimension. Recently, Dajani et al.~\cite{Dajani-Komornik-Kong-Li-2018} showed that the algebraic difference $\ub(x)-\ub(x)$ contains an interval for any $x\in(0, 1]$. The smallest element of $\ub(x)$ was   investigated in \cite{Kong_2016}. In this paper we will investigate the set $\ub(x)$ from the following   perspectives.  (i) We will determine the local dimension of $\ub(x)$ and establish a so-called `variation principle' in unique non-integer base expansions; (ii) We will determine the Hausdorff dimension of the symbolic set $\us(x)$ consisting of all expansions of $x$ in base $q\in\ub(x)$, and show that the function $x\mapsto\dim_H\us(x)$ is a non-increasing Devil's staircase (see Figure \ref{Figure:2}); (iii) We will determine the critical values of $\ub(x)$ {such} that when $x$ passes the first critical value  the set $\ub(x)$   changes from   positive Hausdorff dimension to a countable set, and when $x$ passes the second critical value the set $\ub(x)$ changes from an infinite set to a singleton; (iv) In contrast with $\ub=\ub(1)$ we will show that typically the set $\ub(x)$ contains isolated points.

{For $x>0$ let
\begin{equation}\label{eq:qx}
q_x:=\min\set{1+M, 1+\frac{M}{x}}.
\end{equation}
Then $q_x$   is the largest base in $(1, M+1]$ such that  the given $x$ has an expansion with respect to the alphabet $\set{0,1,\ldots, M}$, i.e., $q_x=\max\ub(x)$.}

 Our first result focuses on the local dimension of $\ub(x)$.
\begin{theorem}\label{main:local-dim}
For any $x>0$ and for any  $q\in {(1,q_x]\setminus \overline{\ub}}$ we have
\begin{align*}
\lim_{\de\to 0}\dim_H(\ub(x)\cap (q-\de, q+\de))=\lim_{\de\to 0}\dim_H(\uu_q\cap(x-\de, x+\de)).
\end{align*}
\end{theorem}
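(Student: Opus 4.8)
The plan is to establish both inequalities between the two local dimensions by exploiting a symbolic dictionary between bases in $\ub(x)$ near a fixed $q$ and points in $\uu_q$ near a fixed $x$. Fix $x>0$ and $q\in(1,q_x]\setminus\overline{\ub}$. Since $q\notin\overline{\ub}$, by \eqref{eq:11} there is an interval $(q_0,q_0^*)$ containing $q$ with $q_0$ a left endpoint and $q_0^*\in\ub^*$ a right endpoint; in particular the map $p\mapsto(d_i)$ sending a base $p$ to the greedy (or quasi-greedy) expansion of $1$ in base $p$ is locally constant on a neighbourhood of $q$ in the complement of $\overline{\ub}$, and more importantly the combinatorial condition characterising uniqueness is stable. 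The key technical tool I would use is the standard characterisation: a sequence $(d_i)$ is the unique $q$-expansion of $x$ if and only if it satisfies a lexicographic condition relating its tails to the quasi-greedy expansion $(\alpha_i(q))$ of $1$ in base $q$ (a Parry-type criterion). Because $q$ lies in a gap $(q_0,q_0^*)$ of $\overline{\ub}$, for $p$ in a small neighbourhood of $q$ the relevant bound $(\alpha_i(p))$ varies in a controlled, monotone way while the "admissible'' digit sequences form essentially the same subshift.

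First I would prove the inequality $\le$: given a small $\delta$, take $q$-expansions $(c_i)$ that are unique expansions of various points $x'\in(x-\delta,x+\delta)$; the set of such sequences, call it $\Sigma_\delta$, has entropy (hence box/Hausdorff dimension under $\pi_q$) comparable to $\dim_H(\uu_q\cap(x-\delta,x+\delta))$. Now for a base $p$ near $q$, the point $x$ has a unique $p$-expansion precisely when a certain sequence obtained from the "digit frequency profile'' of $x$ relative to $p$ lies in the analogous admissible set; varying $p$ over $\ub(x)\cap(q-\delta',q+\delta')$ traces out a subset of sequences whose symbolic complexity is dominated by that of $\Sigma_{\delta}$ for a suitable $\delta=\delta(\delta')\to 0$. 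Transferring back through the bi-Lipschitz-on-pieces nature of $p\mapsto\sum d_i p^{-i}$ on the gap interval gives the dimension bound. For the reverse inequality $\ge$ I would run the construction in the other direction: from a large family of unique $p$-expansions of $x$ for $p$ ranging in a Cantor subset of $\ub(x)$ near $q$, I would manufacture, by fixing $p=q$ and perturbing the digit string in the "free'' coordinates identified above, a family of unique $q$-expansions of points accumulating at $x$, with matching dimension. The point that makes both directions work is that on the gap $(q_0,q_0^*)$ the uniqueness constraint decouples into a fixed subshift condition plus a one-parameter (in $p$, resp.\ in $x$) lexicographic threshold, so the "extra'' parameter contributes zero dimension.

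The main obstacle I anticipate is making the correspondence between "perturbing the base $p$ near $q$'' and "perturbing the point $x'$ near $x$'' genuinely dimension-preserving rather than merely entropy-heuristic: one must show that the distortion of the substitution $p\mapsto\pi_p$ combined with the change in the lexicographic threshold $(\alpha_i(p))$ does not inflate or deflate Hausdorff dimension in the limit $\delta\to 0$. Concretely, one needs uniform (in the scale) comparisons of the form: the set of admissible digit sequences realising uniqueness at base $q$ for points within $\delta$ of $x$, and the set realising uniqueness at $x$ for bases within $\delta'$ of $q$, are each sandwiched between two copies of the other (for comparable scales) up to finitely many prefixed digits — a prefix only costs a bounded factor and is invisible to Hausdorff dimension. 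I would handle this by working with the induced subshifts of finite type (or sofic shifts) arising from truncations of $(\alpha_i(q))$, using that inside the gap these truncations stabilise, and then passing to the limit. A secondary technical point is treating the boundary case $q=q_x$ (when $q_x<M+1$), where $x$ sits at the right end of $I_q$; here the expansion of $x$ is forced to be $M^\infty$ or has a restricted form, and one checks the statement holds trivially or reduces to an interior point after a shift.
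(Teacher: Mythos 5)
Your core intuition is right: the dictionary between bases $p\in\ub(x)$ near $q$ and points $y\in\uu_q$ near $x$ is exactly what drives the proof, and the gap structure of $\overline{\ub}$ is what makes the symbolic constraint stable. But the proposal has several genuine gaps, and you acknowledge the decisive one yourself (``the main obstacle I anticipate is making the correspondence\ldots genuinely dimension-preserving rather than merely entropy-heuristic''). Here is what is missing, and what is off.

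First, the dictionary can be made a single explicit bijection, and once you see it the entropy/SFT machinery and the ``perturbing digit strings in free coordinates'' construction become unnecessary. Since $\overline{\ub}\subset\vb$ and $\vb\setminus\overline{\ub}$ is countable, one can choose $\de>0$ with $(q-\de,q)\cap\vb=\emptyset$, so $\us_p=\us_q$ for all $p\in(q-\de,q]$. Then $p\mapsto\pi_q(\Phi_x(p))$ is a bijection from $\ub(x)\cap(q-\de,q)$ onto $\uu_q\cap(x-\eta,x)$: the digit string $\Phi_x(p)$, which is the unique $p$-expansion of $x$, lies in $\us_p=\us_q$ and is hence also the unique $q$-expansion of some point $y$ near $x$. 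No perturbation, no frequency profile — just re-read the same string in base $q$. You never wrote this map down, and it is the load-bearing object. (The right side $(q,q+\de)$ requires replacing $\Phi_x$ by the greedy map $\Psi_x$ because of the right-continuity of $\Psi_x$, and one must discard a countable subset using Lemma \ref{lem:prop-Uq}(i); your sketch does not distinguish the two sides, but the asymmetry matters.)

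Second, the distortion control that you flag as an obstacle is handled not by passing to SFTs but by two quantitative estimates: $\Phi_x$ is bi-H\"older on $\ub(x)\cap(a,b)$ with exponents $\tfrac{1}{\log a}$ and $\tfrac{1}{\log b}$ (Proposition \ref{prop:bi-holder}), and $\pi_q$ is bi-Lipschitz from $(\us_q,\rho^{\log q})$ to $\uu_q$ (Lemma \ref{lem:inequality-1}). Composing these gives
\[
D_1\,|p_1-p_2|^{\frac{\log q}{\log(q-\de)}}\ \le\ |\pi_q(\Phi_x(p_1))-\pi_q(\Phi_x(p_2))|\ \le\ D_2\,|p_1-p_2|,
\]
and the H\"older exponent $\tfrac{\log q}{\log(q-\de)}\to 1$ as $\de\to 0$. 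That is the mechanism that kills the distortion in the limit; the entropy argument you propose would at best give a comparison at a fixed scale and would still require this kind of exponent control to conclude equality of the limits.

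Third, a factual slip: the map $p\mapsto\al(p)$ (quasi-greedy expansion of $1$) is \emph{not} locally constant off $\overline{\ub}$ — it is strictly increasing by Lemma \ref{l21}(i). What is locally constant (on gaps of $\vb$) is the set $\us_p$, i.e.\ the admissible subshift, not the Parry threshold itself. Your proof sketch leans on the wrong statement; the correct one is Lemma \ref{lem:prop-Uq}(ii). Finally, your treatment of the endpoint $q=q_x$ is in the right spirit but imprecise: when $q_x<M+1$ and $q_x\notin\overline{\ub}$, the greedy expansion of $x$ in base $q_x$ is $M^\f$, so $x=\max\uu_{q_x}$ and both one-sided right limits are trivially zero; no reduction to an interior point is needed.
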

 Theorem \ref{main:local-dim} can be viewed as a `variation principle' in unique non-integer base expansions.  Recall from \cite{DeVries-Komornik-2011} the two dimensional univoque set
\[
\mathbb U=\set{(z, p): z\textrm{ has a unique }p\textrm{-expansion}}.
\]
Then the left hand side in Theorem \ref{main:local-dim} is the local dimension of the vertical slice $\mathbb U\cap\set{z=x}=\ub(x)$ at the point $(x, q)$, and the right hand side gives the local dimension of the horizontal slice $\mathbb U\cap\set{p=q}=\uu_q$ at the same point $(x, q)$. So Theorem \ref{main:local-dim} states  that for any $x>0$ and any $q\in(1, q_x]\setminus\overline{\ub}$ the local dimension  of $\mathbb U$ at the point $(x, q)$ through the vertical slice is the    same  as that through  the   horizontal slice.

Let $\set{0, 1,\ldots, M}^\N$ be the set   of all  sequences $(d_i)=d_1d_2\ldots$ over the alphabet $\set{0,1,\ldots, M}$. Equipped with the {order} topology  on $\set{0,1,\ldots,M}^\N$ induced by the metric
\begin{equation}\label{eq:metric-rho}
\rho((c_i),(d_i))=(M+1)^{-\inf\set{i\ge 1: c_i\ne d_i}}
\end{equation}
we {can define} the Hausdorff dimension of any   subset of $\set{0,1,\ldots,M}^\N$.

Note that $\us_q=\pi_q^{-1}(\uu_q)\subset \set{0,1,\ldots,M}^\N$ is the symbolic horizontal slice of the {two-dimensional} univoque set $\mathbb U$. The following result for the Hausdorff dimension of   $\us_q$ was established in \cite{Komornik-Kong-Li-17} and \cite{Allaart-Kong-2019} (see Figure \ref{Figure:1}).
\begin{figure}[h!]
  \centering
  \includegraphics[{width=12cm,height=8cm}]{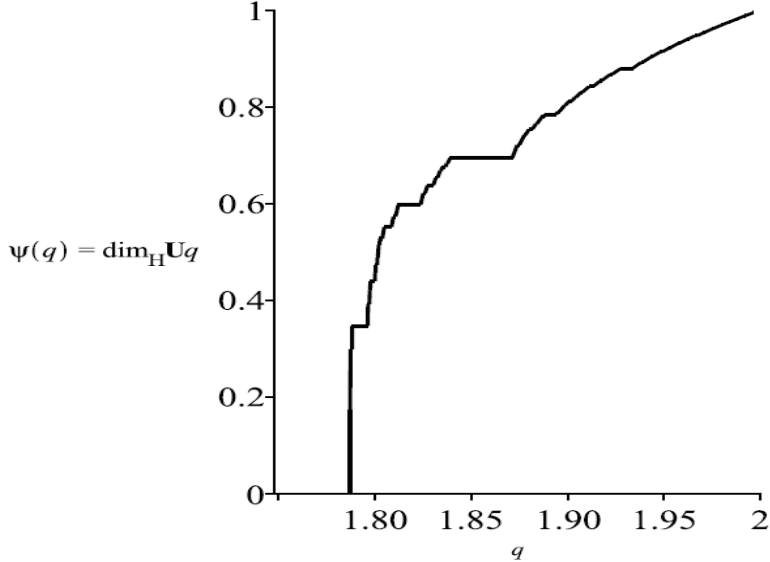}\\
  \caption{The {graph} of $\psi: q\mapsto \dim_H\us_q$ with $M=1$. $\psi(q)$ is positive if and only if $q>q_{KL}\approx 1.78723$, and $\psi(q)=1$ if and only if $q=2$.}\label{Figure:1}
\end{figure}
\begin{proposition}[\cite{Komornik-Kong-Li-17, Allaart-Kong-2019}]
  \label{th:cont-uq}
  The dimensional function $\psi: q\mapsto \dim_H\us_q$ is a non-decreasing Devil's staircase on $(1, M+1]$. {In particular,}
  \begin{itemize}
    \item   $\psi$ is   non-decreasing and continuous  on $(1, M+1]$;
    \item  $\psi$ is locally constant almost everywhere on $(1, M+1]$;
    \item $\psi(q)\in(0, 1]$ if and only if $q>q_{KL}$. Furthermore, $\psi(q)=1$ only when $q=M+1$.
  \end{itemize}
\end{proposition}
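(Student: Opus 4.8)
The plan is to recast $\psi$ as a normalized topological entropy and then control how that entropy varies with the base. First I would invoke the de~Vries--Komornik lexicographic characterization of $\us_q$: writing $\al(q)=(\al_i(q))$ for the quasi-greedy $q$-expansion of $1$ and $\overline a:=M-a$ for the reflected digit, a sequence $(d_i)$ lies in $\us_q$ precisely when, for every $n\ge1$, $\si^n((d_i))\prec\al(q)$ whenever $d_n<M$ and $\overline{\si^n((d_i))}\prec\al(q)$ whenever $d_n>0$, $\si$ being the shift. Since $q\mapsto\al(q)$ is strictly increasing for the lexicographic order, this immediately gives $\us_p\subseteq\us_q$ for $p<q$, so $\psi$ is non-decreasing and admits one-sided limits everywhere. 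Replacing $\prec$ by $\lle$ in the characterization describes a closed shift-invariant set lying between $\us_q$ and its closure $\overline{\us_q}$; all three sets share the same Hausdorff dimension, and for any subshift $X\subseteq\set{0,1,\ldots,M}^\N$ one has $\dim_H X=h(X)/\log(M+1)$, with $h(\cdot)$ the topological entropy and the metric given by (\ref{eq:metric-rho}). Thus $\psi(q)=h(\overline{\us_q})/\log(M+1)$, and everything becomes a statement about $q\mapsto h(\overline{\us_q})$.

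The main work is continuity. For left-continuity I would count words: the length-$n$ language $L_n(\overline{\us_q})$ depends on only finitely many digits of $\al(q)$, and $\al$ is left-continuous, so for $p<q$ close enough to $q$ one has $L_n(\overline{\us_p})=L_n(\overline{\us_q})$ for all $n\le N(p)$, where $N(p)\to\f$ as $p\uparrow q$; combining this with the fact that $\overline{\us_q}$ is approximated in entropy from within by its subshifts of finite type (a structural property of univoque subshifts) yields $h(\overline{\us_p})\to h(\overline{\us_q})$. Right-continuity is the main obstacle. I would first reduce it using the elementary observation that a decreasing sequence of subshifts $X_1\supseteq X_2\supseteq\cdots$ satisfies $h(\bigcap_n X_n)=\lim_n h(X_n)$ — for each $\ell$ the finite set $L_\ell(\bigcap_n X_n)$ already equals $L_\ell(X_N)$ for $N$ large, so $h(X_N)\le\tfrac1\ell\log\#L_\ell(\bigcap_n X_n)\to h(\bigcap_n X_n)$ — applied along any $p_n\downarrow q$, noting that $\bigcap_{p>q}\overline{\us_p}$ is exactly the subshift governed by $\al(q^+):=\inf_{p>q}\al(p)$. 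Right-continuity then amounts to the equality $h(\overline{\us_{q^+}})=h(\overline{\us_q})$, which is automatic at any base where $\al$ is right-continuous. The genuinely hard case is a right-discontinuity of $\al$ — necessarily a Parry-type base, e.g.\ a left endpoint $q_0$ occurring in (\ref{eq:11}) — where $\al(q^+)\succ\al(q)$ and one must show that the extra admissible sequences form a zero-entropy subsystem, forced by the rigid periodic block structure of $\al(q^+)$. This combinatorial analysis is the heart of \cite{Komornik-Kong-Li-17, Allaart-Kong-2019}.

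For local constancy almost everywhere I would appeal to the entropy-plateau picture: $\psi$ is constant on each maximal interval of constancy (a \emph{plateau}), the plateaus are indexed by finitely presented data built from Parry-type words, and a base outside every plateau must have $\al(q)$ avoiding all the associated periodic patterns. The set of such bases — the bifurcation set, i.e.\ the set of $q$ at which $\psi$ fails to be locally constant — is then shown to have Lebesgue measure zero (although, being a Cantor-type set, it carries full Hausdorff dimension). Establishing this null property is once more a combinatorial argument, carried out in \cite{Allaart-Kong-2019}; granting it, $\psi$ is locally constant off a Lebesgue-null set.

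It remains to pin down the values. If $q<q_{KL}$ then $\al(q)\prec\al(q_{KL})$, and the lexicographic characterization above becomes so restrictive that $\us_q$ is countable (Glendinning--Sidorov \cite{Glendinning_Sidorov_2001} for $M=1$, with analogous results for general $M$), whence $\psi(q)=0$; at $q=q_{KL}$ the set $\us_{q_{KL}}$ is uncountable but still has Hausdorff dimension $0$, the (generalized) Komornik--Loreti constant being exactly the dimension threshold, its quasi-greedy expansion being the smallest sequence satisfying the admissibility conditions that define univoque sequences. If $q>q_{KL}$ then $\al(q)\succ\al(q_{KL})$, leaving enough room to embed a full $2$-shift on a pair of equal-length blocks $w_1,w_2$ into $\us_q$, so that $\psi(q)\ge(\log2)/(|w_1|\log(M+1))>0$. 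Finally $\overline{\us_{M+1}}$ is the full shift $\set{0,1,\ldots,M}^\N$, so $\psi(M+1)=1$; while for $q<M+1$ we have $\al(q)\prec M^\f$, and, taking $k$ minimal with $\al_k(q)<M$, the word $0M^k$ is forbidden in $\overline{\us_q}$, so $\overline{\us_q}$ is a proper subshift of the full shift and $h(\overline{\us_q})<\log(M+1)$, i.e.\ $\psi(q)<1$. Combining these — non-decreasing, continuous, locally constant off a null set, and non-constant since $\psi(q_{KL})=0<1=\psi(M+1)$ — gives precisely the assertion that $\psi$ is a non-decreasing Devil's staircase with the stated properties.
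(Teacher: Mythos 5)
The paper does not prove Proposition~\ref{th:cont-uq}: it is stated as a citation to \cite{Komornik-Kong-Li-17, Allaart-Kong-2019}, so there is no in-paper proof to compare your sketch against. As a reconstruction of what those references actually do, your outline is broadly on target: recast $\psi$ as normalized entropy of $\overline{\us_q}$ via $\dim_H = h/\log(M+1)$ (the countability of $\vb_q\setminus\us_q$, where $\vb_q$ is the non-strict version of the characterization, justifies replacing $\us_q$ by its closure), get monotonicity from $p\mapsto\al(p)$, then fight for continuity, with the right-discontinuities of $\al$ and the null bifurcation set identified correctly as the hard combinatorial cores of \cite{Komornik-Kong-Li-17} and \cite{Allaart-Kong-2019} respectively. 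The critical-value discussion at the end (countable below $q_{KL}$, dimension $0$ at $q_{KL}$, embedded $2$-shift above, full shift only at $M+1$ via the forbidden word $0M^k$) is the standard argument and is fine.

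One place your sketch is genuinely misleading as written is left-continuity. Agreement of languages $L_n(\overline{\us_p})=L_n(\overline{\us_q})$ for $n\le N(p)$ gives no control on entropy by itself: a subshift can agree with the full shift on all words of length up to $N$ and still have entropy $0$. Moreover the inclusion $Y\subseteq\overline{\us_p}$ for an interior SFT $Y\subseteq\overline{\us_q}$ is not automatic either, since $p<q$ means $\al(p)\prec\al(q)$, so the constraints defining $\overline{\us_p}$ are \emph{stronger}, not weaker. What actually makes the argument go in \cite{Komornik-Kong-Li-17} is the construction of explicit SFTs $\us_q^{(N)}$ (defined by truncating $\al(q)$ at level $N$ and imposing a strict gap) that are simultaneously contained in $\us_p$ for all $p$ in a two-sided neighbourhood of $q$ and whose entropies increase to $h(\overline{\us_q})$; this uniform-gap construction, not naive word-matching, is the mechanism. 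You do gesture at this with ``a structural property of univoque subshifts,'' but as phrased the word-counting step reads as if it carried more weight than it can. Similarly, the local-constancy-a.e.\ part is asserted rather than argued; that is consistent with how the paper itself treats the proposition, but it is worth being explicit that the Lebesgue-null property of the bifurcation set of $\psi$ is a theorem of \cite{Allaart-Kong-2019, Allaart-Baker-Kong-17} and not a formal consequence of the monotone-Cantor picture.
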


The detailed study of the plateaus of $\psi$, i.e., the largest intervals for which $\psi$ is constant,  can be found in \cite{AlcarazBarrera-Baker-Kong-2016}. For the bifurcation set of $\psi$,  which is the set of {points where $\psi$ vibrates},  we refer to \cite{Allaart-Baker-Kong-17}.

For $x>0$ let $\Phi_x(q)=x_1(q)x_2(q)\ldots$ be the quasi-greedy $q$-expansion of $x$ (see Section \ref{sec: preliminaries} for its definition). Now we define the symbolic set of univoque bases by
\[
\us(x):=\set{\Phi_x(q): q\in\ub(x)}.
\]
Observe that  for each $q\in\ub(x)$ the sequence $\Phi_x(q)\in\us(x)$ is the unique $q$-expansion of $x$.
So, the map $q\mapsto \Phi_x(q)$ is   bijective from  $\ub(x)$ to $\us(x)$. We will show in {Proposition \ref{prop:bi-holder}}    that  the map $q\mapsto\Phi_x(q)$ is locally bi-H\"older continuous {on $\ub(x)$}.

Observe that $\us(x)=\Phi_x(\ub(x))$ is the symbolic vertical slice of the two dimensional univoque set $\mathbb U$. Comparing  with {Proposition} \ref{th:cont-uq} our second main result gives the Hausdorff dimension of $\us(x)$, and shows that  the dimensional function $x\mapsto\dim_H\us(x)$ is a non-increasing Devil's staircase (see Figure \ref{Figure:2}).
\begin{figure}[h!]
  \centering
  \includegraphics[{width=12cm,height=8cm}]{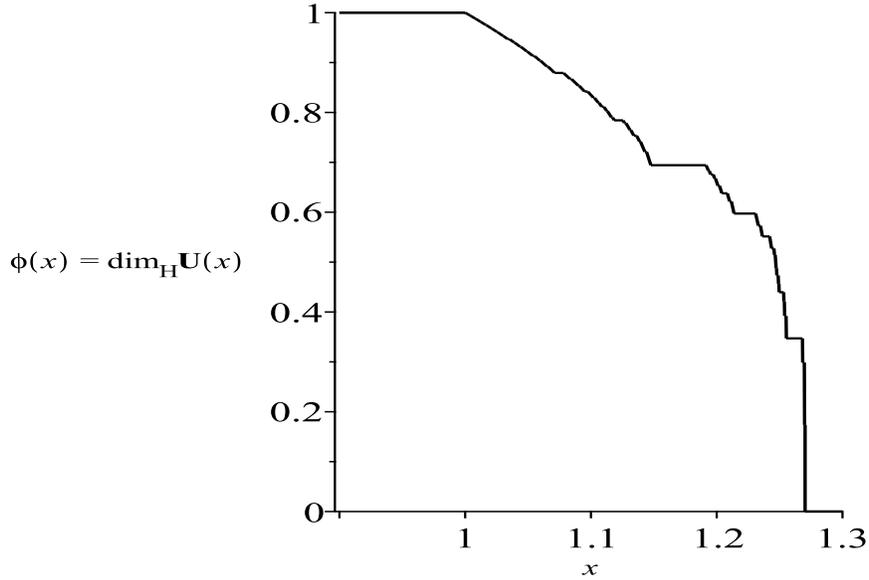}\\
  \caption{The {graph} of $\phi: x\mapsto \dim_H\us(x)$ with $M=1$. $\phi(x)$ is positive if and only if $x<x_{KL}\approx 1.27028$, and $\phi(x)=1$ if and only if $x\le 1$.}\label{Figure:2}
\end{figure}
\begin{theorem}\label{main:dim-devil}
For any $x>0$ the Hausdorff dimension of $\us(x)$ is given by
\[
\dim_H\us(x)=\dim_H\us_{q_x},
\]
where $q_x$ is defined in (\ref{eq:qx}).
Consequently, the dimensional function $\phi: x\mapsto\dim_H\us(x)$ is a non-increasing Devil's staircase on $(0,\f)$. {In particular,}
\begin{itemize}
  \item[{\rm(i)}] $\phi$ is non-increasing and continuous on $(0, \f)$;
  \item[{\rm(ii)}] $\phi$ is locally constant almost everywhere;
  \item[{\rm(iii)}] $\phi(x)\in(0, 1]$ if and only if $x<\frac{M}{q_{KL}-1}$. Furthermore, $\phi(x)=1$ if and only if $x\le 1$.
\end{itemize}
\end{theorem}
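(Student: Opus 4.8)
The identity $\dim_H\us(x)=\dim_H\us_{q_x}$ splits into an easy upper bound and a more substantial lower bound; the staircase statements then follow formally from Proposition~\ref{th:cont-uq} and the elementary shape of $x\mapsto q_x$.

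\emph{Upper bound and the key inclusion.} Since $q_x=\max\ub(x)$ and the family $\us_q$ is non-decreasing in $q$ (immediate from the lexicographic characterisation of unique expansions, as $q\mapsto\Phi_1(q)$ is non-decreasing), every $\Phi_x(q)$ with $q\in\ub(x)$ lies in $\us_q\subseteq\us_{q_x}$; hence $\us(x)\subseteq\us_{q_x}$ and $\dim_H\us(x)\le\dim_H\us_{q_x}$. For the reverse inequality the crucial observation is that
\[
\set{\mathbf d\in\us_{q_0}:\ \pi_{M+1}(\mathbf d)\le x\le\pi_{q_0}(\mathbf d)}\ \subseteq\ \us(x)\qquad\text{for every }q_0\in(1,q_x).
\]
Indeed such a $\mathbf d$ is univoque in base $q_0$ and not eventually constant (no nonzero univoque sequence in a base below $M+1$ is eventually $0$ or eventually $M$, by the complement condition), so $q\mapsto\pi_q(\mathbf d)$ decreases continuously from $\infty$ to $0$ and there is a unique $q=q(\mathbf d)$ with $\pi_q(\mathbf d)=x$; the two displayed inequalities give $q_0\le q(\mathbf d)\le M+1$, while $x=\pi_{q(\mathbf d)}(\mathbf d)\le M/(q(\mathbf d)-1)$ gives $q(\mathbf d)\le 1+M/x$, so $q(\mathbf d)\in[q_0,q_x]\subseteq(1,M+1]$; by monotonicity $\mathbf d\in\us_{q_0}\subseteq\us_{q(\mathbf d)}$, whence $q(\mathbf d)\in\ub(x)$ and $\Phi_x(q(\mathbf d))=\mathbf d$. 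Since $\psi$ is continuous and $\psi(q_0)\uparrow\psi(q_x)$ as $q_0\uparrow q_x$, it therefore suffices to exhibit, for each $q_0\in(1,q_x)$, a subset of the left-hand set of Hausdorff dimension arbitrarily close to $\psi(q_0)$.

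\emph{Lower bound, case $x\ge 1$.} Here $q_x=1+M/x$, the constraint $\pi_{M+1}(\mathbf d)\le 1\le x$ is automatic, and I would prepend a long block of $M$'s. Put $A(q_0)=\set{\mathbf e\in\us_{q_0}:(M-e_1)(M-e_2)\cdots\prec\Phi_1(q_0)}$. Checking the lexicographic conditions shows $M^k\mathbf e\in\us_{q_0}$ for all $\mathbf e\in A(q_0)$ and all $k\ge 1$; and since $q_0<q_x$ forces $M/(q_0-1)>x$, for $k$ large $\pi_{q_0}(M^k\mathbf e)\ge(1-q_0^{-k})M/(q_0-1)\ge x$ uniformly in $\mathbf e$, so $M^kA(q_0)\subseteq\us(x)$. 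As $\mathbf e\mapsto M^k\mathbf e$ is a similarity, $\dim_H M^kA(q_0)=\dim_H A(q_0)$, and $\dim_H A(q_0)=\psi(q_0)$: the map $\mathbf e\mapsto M\mathbf e$ is an isometry of $A(q_0)$ onto the cylinder $\us_{q_0}\cap[M]$ (where $[a]=\set{\mathbf e:e_1=a}$), while the reflection $\mathbf e\mapsto(M-e_i)_i$ (an isometry preserving $\us_{q_0}$) and the analogous first-digit-transposition isometries show $\dim_H(\us_{q_0}\cap[a])\le\dim_H(\us_{q_0}\cap[M])$ for every $a$, hence $\dim_H(\us_{q_0}\cap[M])=\max_a\dim_H(\us_{q_0}\cap[a])=\dim_H\us_{q_0}=\psi(q_0)$.

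\emph{Lower bound, case $x<1$; conclusion; obstacle.} Now $q_x=M+1$ and the target is $\dim_H\us(x)=1$. For $q_0<M+1$ there is $L_0=L_0(q_0)\to\infty$ such that any sequence avoiding the blocks $0^{L_0}$ and $M^{L_0}$ is univoque in base $q_0$ (because $\Phi_1(q_0)$ begins with a long run of $M$'s). Fixing a small $\delta_0=\delta_0(q_0)\to 0$ and, by a short combinatorial argument, a finite word $u$ avoiding $0^{L_0},M^{L_0}$ with $\pi_{M+1}([u])\subseteq(x-\delta_0,x)$, one checks that every $\mathbf d=u\mathbf c$ with $\mathbf c$ avoiding those blocks (and its first digit chosen to break the junction run) has $\pi_{M+1}(\mathbf d)<x$ while $\pi_{q_0}(\mathbf d)-\pi_{M+1}(\mathbf d)\ge\eta(q_0)>0$ — the gap being bounded below because $\pi_{M+1}(\mathbf d)>x-\delta_0$ confines the first nonzero digit of $\mathbf d$ to a bounded range — so that $\pi_{q_0}(\mathbf d)>x$ once $\delta_0\le\eta(q_0)/2$; hence this set lies in $\us(x)$ and its dimension equals that of the run-restricted full shift, which tends to $1$ as $L_0\to\infty$. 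Combining the two cases, $\dim_H\us(x)\ge\sup_{q_0<q_x}\psi(q_0)=\psi(q_x)=\dim_H\us_{q_x}$, proving the identity. For the consequences, write $g(x)=q_x=\min\set{M+1,\,1+M/x}$: $g$ is continuous and non-increasing on $(0,\infty)$, equals $M+1$ on $(0,1]$ and is a real-analytic decreasing bijection of $[1,\infty)$ onto $(1,M+1]$, so $\phi=\psi\circ g$ inherits from Proposition~\ref{th:cont-uq} continuity, monotonicity and local constancy off a Lebesgue-null set, and $\phi(x)\in(0,1]$ (resp.\ $\phi(x)=1$) iff $g(x)>q_{KL}$ (resp.\ $g(x)=M+1$), i.e.\ iff $x<M/(q_{KL}-1)$ (resp.\ $x\le 1$). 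The main difficulty is the lower bound, specifically the case $x<1$: one must keep $\pi_{M+1}(\mathbf d)$ just below $x$ (so $q(\mathbf d)<M+1$, yet close enough to $M+1$ to force $\mathbf d$ univoque in its own base) \emph{and} $\pi_{q_0}(\mathbf d)\ge x$, which is why the prepend-$M$'s device of the $x\ge 1$ case must be replaced by a run restriction with a carefully placed $(M+1)$-adic prefix; a secondary point needing care, if one wants self-containedness, is the homogeneity identity $\dim_H A(q_0)=\psi(q_0)$, which is nonetheless implicit in the structure theory behind Proposition~\ref{th:cont-uq}.
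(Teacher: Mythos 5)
Your proof is essentially correct and takes a genuinely different route from the paper's in the more delicate case $x\ge 1$, while being broadly similar for $x<1$. The paper also splits at $x=1$: for $x\in(0,1)$ it builds, via Lemma~\ref{lem:subset-u-x}, explicit subsets $\us_{N_j}(x)\subset\us(x)$ of the form $\ep_1\ldots\ep_{N+N_j}\,\mathbf w\,(\text{run-restricted tail})$, where $(\ep_i)=\Phi_x(M+1)$ and $\mathbf w\in\{\epsilon,0M\}$, then lets $N_j\to\infty$ to reach dimension $1$; your prefix-near-$x$ plus run-restriction construction is the same idea, though you pick a ``good'' $(M+1)$-adic prefix $u$ freely rather than working with the expansion of $x$ itself. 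For $x\ge 1$, however, the paper prepends $M^J$ to the finite-window SFT approximation $\us_{q_x,N}$ (sequences whose $N$-blocks are sandwiched between $\overline{\al_1(q_x)\cdots\al_N(q_x)}$ and $\al_1(q_x)\cdots\al_N(q_x)$), checks via Lemma~\ref{l21} that for $J$ large the induced base $p$ has $\al_1(p)\cdots\al_N(p)=\al_1(q_x)\cdots\al_N(q_x)$, and then invokes the external result $\lim_{N\to\infty}\dim_H\us_{q_x,N}=\dim_H\us_{q_x}$ from \cite[Theorem~3.1]{Allaart-Kong-2019}. You instead prepend $M^k$ to $A(q_0)=\{\mathbf e\in\us_{q_0}:\overline{\mathbf e}\prec\al(q_0)\}$ for $q_0<q_x$, prove $\dim_H A(q_0)=\psi(q_0)$ by a digit-symmetry/shift argument, and take $q_0\uparrow q_x$ using the continuity of $\psi$ from Proposition~\ref{th:cont-uq}. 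Both substitutes for the nontrivial core step are of comparable depth; the trade is that the paper leans on the quantitative $\us_{q,N}$-approximation result, whereas you lean on continuity of $\psi$ plus the homogeneity of $\us_{q_0}$. Your ``key inclusion''
\[
\set{\mathbf d\in\us_{q_0}:\ \pi_{M+1}(\mathbf d)\le x\le\pi_{q_0}(\mathbf d)}\ \subseteq\ \us(x),
\]
which packages both cases under one principle (reparametrize $\mathbf d$ by the unique base $q(\mathbf d)$ with $\pi_{q(\mathbf d)}(\mathbf d)=x$, then use monotonicity of $q\mapsto\us_q$), is a nice abstraction the paper never states explicitly though it is implicit in both of its constructions. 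Two places in your writeup deserve tightening if you want self-containedness: the phrase ``first-digit-transposition isometries'' should be unpacked (for $0<a<M$ the map $a\mathbf e'\mapsto M\mathbf e'$ is an isometry of $\us_{q_0}\cap[a]$ into $\us_{q_0}\cap[M]$ because the digit condition at $n=1$ only loosens, and $a=0$ reduces to $a=M$ by the reflection $\mathbf e\mapsto\overline{\mathbf e}$, giving $\dim_H A(q_0)=\dim_H(\us_{q_0}\cap[M])=\dim_H\us_{q_0}$), and the existence of the prefix $u$ in the $x<1$ case should note that one can pick a point $y\in(x-\delta_0,x)$ whose $(M+1)$-expansion avoids long runs (such $y$ are dense) and truncate it far enough out. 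Your derivation of (i)--(iii) from $\phi=\psi\circ g$ with $g(x)=q_x$ matches the paper's reduction to Proposition~\ref{th:cont-uq}.
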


Recall from \cite{Baker_2014} that the \emph{generalized golden ratio} is defined by
\begin{equation}\label{eq:qg}
q_G=q_G(M):=\left\{\begin{array}
  {lll}
  k+1&\textrm{if}& M=2k;\\
  \frac{k+1+\sqrt{k^2+6k+5}}{2}&\textrm{if}& M=2k+1.
\end{array}\right.
\end{equation}
Note that $q_{KL}=q_{KL}(M)$ is the smallest element of $\ub=\ub(1)$ and $1<q_G<q_{KL}<M+1$. The following result on the critical values of $\uu_q=\pi_q(\us_q)$ was first proven by Glendinning and Sidorov \cite{Glendinning_Sidorov_2001} for $M=1$ and then proven in \cite{Kong_Li_Dekking_2010} for all $M\ge 2$. Furthermore, the  Hausdorff dimension of $\uu_q$ was given in \cite{Komornik-Kong-Li-17}. For a set $A$ we denote by $|A|$  its cardinality.
\begin{proposition}[\cite{Glendinning_Sidorov_2001, Kong_Li_Dekking_2010,Komornik-Kong-Li-17}]
  \label{th:critical-uq}
  For any $q\in(1, M+1]$ the Hausdorff dimension of $\uu_q$ is given by
  \[
  \dim_H\uu_q=\frac{\dim_H\us_q}{\log q}.
  \]
  Furthermore, we have the following properties.
\begin{itemize}
  \item  If $q\in(1,q_G]$, then $\uu_q=\set{0, \frac{M}{q-1}}$;
  \item If $q\in(q_G, q_{KL})$, then $|\uu_q|=\aleph_0$;
  \item If $q=q_{KL}$, then $|\uu_q|=2^{\aleph_0}$ and $\dim_H\uu_q=0$;
  \item If $q\in(q_{KL}, M+1]$, then $\dim_H\uu_q\in(0, 1]$. Furthermore, $\dim_H\uu_q=1$ if and only if $q=M+1$.
\end{itemize}
\end{proposition}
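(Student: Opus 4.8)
We plan to work entirely on the symbolic side, where $\us_q$ is governed by the quasi-greedy expansion $\alpha(q)=\alpha_1(q)\alpha_2(q)\cdots$ of $1$ in base $q$ via the standard characterization: $(d_i)\in\us_q$ exactly when, for every $n\ge1$, the tail $\sigma^n((d_i))$ lies strictly below $\alpha(q)$ whenever $d_n<M$, and the reflected tail $\overline{\sigma^n((d_i))}$ lies strictly below $\alpha(q)$ whenever $d_n>0$. Two running facts are that $q\mapsto\alpha(q)$ is strictly increasing in the lexicographic order and that $\alpha_1(q)\ge1$ for all $q>1$; a third useful tool is the reflection symmetry $x\mapsto M/(q-1)-x$ of $\uu_q$, which on sequences is $(d_i)\mapsto(\overline{d_i})$ and preserves $\us_q$.

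First, the dimension identity. Since $\pi_q$ is injective on $\us_q$ by the definition of $\uu_q$, it suffices to observe that $\pi_q|_{\us_q}$ is a bi-H\"older homeomorphism onto $\uu_q$: two sequences in $\us_q$ agreeing on their first $n$ digits have $\pi_q$-images within $\frac{M}{q-1}q^{-n}$ of each other, while the strict inequalities against $\alpha(q)$ and $\overline{\alpha(q)}$ keep tails bounded away from the critical sequences and hence force a matching lower bound $c\,q^{-n}$ on the distance of $\pi_q$-images of sequences first differing at place $n+1$. As the metric $\rho$ contracts length-$n$ cylinders at rate $(M+1)^{-n}$, comparing cylinder covers yields $\dim_H\uu_q=\frac{\log(M+1)}{\log q}\dim_H\us_q$, i.e.\ the stated formula once $\log$ is read in base $M+1$; equivalently both sides are the topological entropy of the subshift $\us_q$ divided by $\log q$, resp.\ by $\log(M+1)$.

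For the trichotomy we would argue interval by interval. (i) If $q\le q_G$, besides $0^\f$ and $M^\f$ (always univoque), we invoke the explicit threshold expansions $\alpha(q_G)=k^\f$ when $M=2k$ and $\alpha(q_G)=\big((k+1)\,k\big)^\f$ when $M=2k+1$; for $M$ even a proper digit $0<d_n<M$ would need $\sigma^n((d_i))$ to lie strictly between $\overline{\alpha(q)}\succeq k^\f$ and $\alpha(q)\preceq k^\f$, which is impossible, so $(d_i)$ uses only the digits $0,M$, cannot end in $0^\f$ or $M^\f$ (those are forbidden since $\alpha(q)\prec M^\f$), hence has ``$0M$'' as a factor, which contradicts $\sigma^n((d_i))\prec\alpha(q)\preceq k^\f$; for odd $M$ a similar but more intricate analysis of the admissible patterns of proper digits (carried out in \cite{Kong_Li_Dekking_2010}) gives the same conclusion $\uu_q=\set{0,M/(q-1)}$. (ii) If $q_G<q<q_{KL}$, strict monotonicity gives $\overline{\alpha(q_G)}\preceq\alpha(q_G)\prec\alpha(q)$, which makes the periodic sequence $\alpha(q_G)$ itself univoque in base $q$ (all its shift-admissibility inequalities collapse to this chain), and then $\set{0^n\alpha(q_G):n\ge0}\subset\us_q$ shows $\uu_q$ is infinite; that $\uu_q$ stays only countable below $q_{KL}$ is the subtle half — because $\alpha(q)\prec\alpha(q_{KL})$ fails the self-admissibility enjoyed by the (generalized) Thue--Morse sequence $\alpha(q_{KL})$, the tree of admissible prefixes has only finitely many branching sites, so every univoque sequence eventually locks onto a shift of $\alpha(q)$ or of $\overline{\alpha(q)}$ after a prefix from a finite set, giving $|\us_q|=\aleph_0$. (iii) If $q=q_{KL}$, one exhibits a continuum of admissible sequences by a Thue--Morse-type coding (concatenating blocks from a suitable two-word alphabet), so $|\uu_{q_{KL}}|=2^{\aleph_0}$, while $\dim_H\uu_{q_{KL}}=0$ is immediate since $\psi(q_{KL})=\dim_H\us_{q_{KL}}=0$ by Proposition~\ref{th:cont-uq} (continuity of $\psi$ with $\psi>0\iff q>q_{KL}$) together with the dimension formula. (iv) If $q>q_{KL}$, Proposition~\ref{th:cont-uq} gives $\psi(q)\in(0,1]$ with $\psi(q)=1$ only at $M+1$, so the dimension formula gives $\dim_H\uu_q>0$; the bound $\dim_H\uu_q\le1$ is automatic as $\uu_q\subset\R$, with equality at $q=M+1$ since $\uu_{M+1}$ is $[0,1]$ minus a countable set, and $\dim_H\uu_q<1$ for $q<M+1$ because $\uu_q$ has Lebesgue measure zero (Sidorov), so $\overline{\us_q}$ is a proper subshift of the $\beta$-shift, whence by Hofbauer's intrinsic ergodicity $\dim_H\uu_q=h_{\mathrm{top}}(\overline{\us_q})/\log q<1$.

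The step we expect to be the main obstacle is pinning down the Komornik--Loreti constant as the \emph{exact} cardinality threshold in (ii)--(iii): the dimension identity, the trivial bounds, and the vanishing of $\dim_H\uu_{q_{KL}}$ are soft or follow from Proposition~\ref{th:cont-uq}, and the endpoint $q=q_G$ is a finite combinatorial check at an explicitly known point (genuinely short for even $M$ and for $M=1$, more delicate for odd $M\ge3$); but showing that $\us_q$ remains countable for \emph{every} $q$ below $q_{KL}$ and jumps to a continuum precisely at $q_{KL}$ requires the fine combinatorics of the (generalized) Thue--Morse sequence and a careful analysis of the admissibility structure of the family $\{\alpha(q)\}_{q>1}$.
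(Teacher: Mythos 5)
The paper does not prove this proposition; it is imported verbatim from the cited references (Glendinning--Sidorov for $M=1$, Kong--Li--Dekking for general $M$, Komornik--Kong--Li for the dimension formula), so there is no in-paper argument to compare against. Assessed on its own terms your outline is sound in structure and reaches the right conclusions, but one step as you have written it would fail, and it is worth pinning down.

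The mechanism you propose for the lower bi-H\"older bound in the dimension identity --- ``the strict inequalities against $\alpha(q)$ and $\overline{\alpha(q)}$ keep tails bounded away from the critical sequences'' --- is not correct: for $q>q_{KL}$ there are sequences in $\us_q$ whose tails approach $\alpha(q)$ lexicographically, and whose projected tail values approach $\pi_q(\alpha(q))=1$; there is no uniform gap of that kind. What actually produces the lower bound is the asymmetric use of greediness on the smaller tail and laziness on the larger one. If $(c_i),(d_i)\in\us_q$ first differ at position $n+1$ with $c_{n+1}<d_{n+1}$, then $c_{n+1}<M$ together with greediness of $\sigma^{n+1}((c_i))$ gives $\pi_q(\sigma^n((c_i)))<(c_{n+1}+1)/q$, while $d_{n+1}>0$ together with laziness of $\sigma^{n+1}((d_i))$ gives $\pi_q(\sigma^n((d_i)))>d_{n+1}/q+\frac{1}{q}\bigl(\frac{M}{q-1}-1\bigr)$; subtracting yields a uniform lower bound $\pi_q((d_i))-\pi_q((c_i))>\frac{M+1-q}{q(q-1)}\,q^{-n}$. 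This is precisely the estimate underlying Lemma~\ref{lem:inequality-1} in the paper (quoting Allaart), and it makes transparent why the hypothesis $q<M+1$ is needed there: the constant degenerates at $q=M+1$. Your upper bound and the resulting dimension comparison are fine, as is the reformulation via topological entropy.

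Beyond that, your $q\le q_G$ argument for even $M$ is correct, and for odd $M$ and for the cardinality statements at and below $q_{KL}$ you appropriately defer to the references. Do note, though, that the claim that $\uu_q$ stays countable on all of $(q_G,q_{KL})$ really is the hard half; the heuristic about ``finitely many branching sites'' in the prefix tree describes the phenomenon but is not a proof, and an argument from scratch would have to carry out the Thue--Morse combinatorics and the stability of $\us_q$ across the gaps of $\overline{\ub}$ exactly as in Glendinning--Sidorov and Kong--Li--Dekking. Items for $q\ge q_{KL}$ follow from Proposition~\ref{th:cont-uq} plus the dimension formula, as you say.
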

Here in {Proposition} \ref{th:critical-uq} and throughout the paper we keep using  base $M+1$ logarithms.
By  Theorem   \ref{main:dim-devil} and {Proposition} \ref{th:critical-uq} we are able to determine the critical values of $\ub(x)$ for $x>0$ and $M\ge 1$. Set
\[
x_G:=\frac{M}{q_G-1}\quad\textrm{and}\quad x_{KL}:=\frac{M}{q_{KL}-1}.
\]
Since $1<q_G<q_{KL}<M+1$, it {follows} that $1<x_{KL}<x_G$. Furthermore,  by (\ref{eq:qx}) it follows that  $q_{x_G}=q_G$ and $q_{x_{KL}}=q_{KL}$.
\begin{theorem}
  \label{main:critical-point}
  Let  $M\ge 1$.  The set $\ub(x)$ has zero Lebesgue measure for any $x>0$. {Furthermore,}
  \begin{itemize}
   \item[{\rm(i)}] {if} $x\in(0,1]$, then $\dim_H\ub(x)=1$;
   \item[{\rm(ii)}] {if} $x\in(1,x_{KL})$, then $0<\dim_H\ub(x)<1$;
   \item[{\rm(iii)}] {if} $x\in[x_{KL},x_G)$, then $|\ub(x)|=\aleph_0$;
    \item[{\rm(iv)}] {if} $x\ge x_G$, then $\ub(x)=\set{q_x}$.
  \end{itemize}
\end{theorem}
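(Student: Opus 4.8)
The plan is to translate the three thresholds on the $x$-axis into thresholds on the $q$-axis via (\ref{eq:qx}): one has $x\le 1\Leftrightarrow q_x=M+1$, $x\in(1,x_{KL})\Leftrightarrow q_x\in(q_{KL},M+1)$, $x\in[x_{KL},x_G)\Leftrightarrow q_x\in(q_G,q_{KL}]$ and $x\ge x_G\Leftrightarrow q_x\le q_G$, and moreover $q_x=\max\ub(x)$ (so $\ub(x)\subseteq(1,q_x]$), with $x=M/(q_x-1)$ whenever $q_x<M+1$. Part (iv) then drops out of Proposition \ref{th:critical-uq}: if $x\ge x_G$ then $(1,q_x]\subseteq(1,q_G]$, so $\uu_q=\{0,M/(q-1)\}$ for every admissible $q$, and since $x>0$ the relation $x\in\uu_q$ forces $q=1+M/x=q_x$; hence $\ub(x)=\{q_x\}$.

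For the zero Lebesgue measure statement I would combine Theorem \ref{main:local-dim} with the Lebesgue density theorem. Fix $x>0$. For every $q\in(1,q_x)\setminus\overline{\ub}$ we have $q<M+1$, so $\dim_H\uu_q<1$ by Proposition \ref{th:critical-uq}, and Theorem \ref{main:local-dim} gives $\lim_{\de\to0}\dim_H(\ub(x)\cap(q-\de,q+\de))=\lim_{\de\to0}\dim_H(\uu_q\cap(x-\de,x+\de))\le\dim_H\uu_q<1$; hence $\ub(x)\cap(q-\de,q+\de)$ is Lebesgue null for small $\de$, so $q$ is not a Lebesgue density point of $\ub(x)$. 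Thus every density point of $\ub(x)$ lies in $\overline{\ub}\cup\{q_x\}$, which has Lebesgue measure zero (because $\ub$ is null and $\overline{\ub}\setminus\ub$ is countable), so $\ub(x)$ is Lebesgue null.

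The dimension statements (i)–(ii) I would derive from Theorem \ref{main:dim-devil} together with the bijection $q\mapsto\Phi_x(q)$ of $\ub(x)$ onto $\us(x)$, which by Proposition \ref{prop:bi-holder} is locally bi-H\"older with exponents at a base $q$ equal to $\log q$ and $1/\log q$ (the same exponents as for $\pi_q$; here $\log$ is the base-$(M+1)$ logarithm). Consequently the local dimension of $\us(x)$ at $\Phi_x(q)$ equals $(\log q)$ times that of $\ub(x)$ at $q$; since the Hausdorff dimension of a set is the supremum of its local dimensions (a Lindel\"of covering argument), $\us(x)=\Phi_x(\ub(x))$, and $q\le q_x$ for every $q\in\ub(x)$, we get
\[
\dim_H\us_{q_x}=\dim_H\us(x)\le(\log q_x)\,\dim_H\ub(x),
\qquad\text{hence}\qquad
\dim_H\ub(x)\ge\frac{\dim_H\us_{q_x}}{\log q_x}=\dim_H\uu_{q_x}
\]
by Theorem \ref{main:dim-devil} and Proposition \ref{th:critical-uq}. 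When $x\le 1$ this reads $\dim_H\ub(x)\ge\dim_H\uu_{M+1}=1$, giving (i) (the reverse bound being trivial); when $x\in(1,x_{KL})$ it gives $\dim_H\ub(x)\ge\dim_H\uu_{q_x}>0$ since $q_{KL}<q_x<M+1$. For the upper bound in (ii), split $\ub(x)=(\ub(x)\setminus\overline{\ub})\cup(\ub(x)\cap\overline{\ub})$: on the first set the local dimension at each $q$ is, by Theorem \ref{main:local-dim} and Proposition \ref{th:critical-uq}, at most $\dim_H\uu_q\le\max_{t\in[q_{KL},q_x]}\dim_H\uu_t<1$ (continuity of $t\mapsto\dim_H\uu_t$, compactness, and $q_x<M+1$), while $\dim_H(\ub(x)\cap\overline{\ub})\le\dim_H(\overline{\ub}\cap(1,q_x])<1$ by the local dimension theory of $\ub$ (cf.~\cite{Allaart-Kong-2018}); taking the supremum of local dimensions then yields $\dim_H\ub(x)<1$.

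It remains to settle part (iii), where $q_x\in(q_G,q_{KL}]$, and this is where the work lies. For $|\ub(x)|\le\aleph_0$ I would use that $\us_p\subseteq\us_q$ for $p<q$, that $\us_q$ is countable for all $q<q_{KL}$ (Proposition \ref{th:critical-uq}), and that $q\mapsto\Phi_x(q)$ is injective: if $\ub(x)\cap(1,q_{KL})$ were uncountable, then some $\ub(x)\cap(1,q_0)$ with $q_0<q_{KL}$ would be uncountable, yet its injective image lies in the countable set $\us_{q_0}$ — a contradiction; adding the at most one further point $q_x$ shows $\ub(x)$ is countable. For $|\ub(x)|=\aleph_0$ I would exhibit an explicit family: fix $q'\in(q_G,q_x)$ and a unique expansion $w\in\us_{q'}$ with $w\notin\{0^\f,M^\f\}$ and $\overline{w}\prec\alpha(q')$, $\alpha(q')$ being the quasi-greedy $q'$-expansion of $1$ (such $w$ exists because $\uu_{q'}$ is infinite by Proposition \ref{th:critical-uq}, e.g.\ the unique expansion of a point of $\uu_{q'}$ close to its top endpoint). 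Then for every $q\in[q',q_x]$ and every $m\ge1$ the word $M^mw$ is a unique $q$-expansion ($M^mw\in\us_q$ follows from $\us_{q'}\subseteq\us_q$ and $\alpha(q')\preceq\alpha(q)$), the value $\pi_q(M^mw)$ is strictly decreasing in $q$, has limit $\ge Mm$ as $q\to1^+$, and at $q=q_x$ is strictly below $x$ because $\pi_{q_x}(w)<M/(q_x-1)=x$; so for $m>x/M$ there is a unique $q_m\in(1,q_x)$ with $\pi_{q_m}(M^mw)=x$. Since $q_m\to q_x$ (it is bounded away from $1$ and $q_m^{-m}\to0$), for large $m$ we have $q_m\ge q'$, hence $M^mw$ is the unique $q_m$-expansion of $x$ and $q_m\in\ub(x)$; the $q_m$ are pairwise distinct, so $|\ub(x)|=\aleph_0$. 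The points I expect to be most delicate are the precise H\"older exponents of Proposition \ref{prop:bi-holder} underlying the lower bounds in (i)–(ii), the bound $\dim_H(\overline{\ub}\cap(1,q_x])<1$, and the admissibility bookkeeping that makes the sequence $(q_m)$ work in (iii).
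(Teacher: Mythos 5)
Your argument is correct in outline and agrees with the paper on the translation $x\leftrightarrow q_x$, on part (iv), and on the lower bounds in (i)--(ii) via $\dim_H\ub(x)\ge\dim_H\us(x)/\log q_x=\dim_H\uu_{q_x}$, but you take genuinely different routes in two places. For the Lebesgue-null claim you go back to the original Lu--Tan--Wu route (Lebesgue density theorem, supplied here by Theorem \ref{main:local-dim}); the paper explicitly replaces this with an ``alternate proof'' that runs a compactness argument over $\overline{\ub(x)}$ using Proposition \ref{prop:31} and Lemma \ref{lem:prop-Uq}(i) to show $\dim_H(\ub(x)\cap(1,M+1-2^{-n}))<1$ for every $n$, which gives outer $\mathcal H^1$-measure zero directly. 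Your version has a small loose end the paper's avoids: to invoke the density theorem you must first know $\ub(x)$ is Lebesgue measurable, which you never establish (a Lindel\"of covering of $(1,q_x)\setminus\overline{\ub}$ by the neighbourhoods you construct would sidestep this and give outer measure zero directly, as the paper implicitly does). For the upper bound in (ii) you split $\ub(x)$ along $\overline{\ub}$ and bound $\dim_H(\ub(x)\cap\overline{\ub})$ by an imported fact $\dim_H(\overline{\ub}\cap(1,q_x])<1$; this fact is true (it follows from Kalle--Kong--Li--L\"u or Allaart--Kong), but it is an extra external dependency. The paper instead covers all of $\overline{\ub(x)}$ at once: for each $q\in\overline{\ub(x)}$ it uses Proposition \ref{prop:31} together with the monotonicity $\us_p\subseteq\us_{q+\de}$ to get $\dim_H(\ub(x)\cap(q-\de,q+\de))\le\dim_H\uu_{q+\de}\le\xi+\ep$, then extracts a finite subcover by compactness of $\overline{\ub(x)}$ --- no facts about the global structure of $\overline{\ub}$ are needed, and the same mechanism is reused verbatim for the measure claim. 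Finally, for the infinitude in (iii) your construction $M^m w$ with a suitably chosen $w\in\us_{q'}$ is in the same spirit as the paper's, which simply takes the explicit sequences $M^k\al(q_G)$; the paper's choice avoids the existence/admissibility bookkeeping for $w$ that you (correctly) flag as the delicate point. Everything you wrote can be made rigorous, but it is less self-contained than the paper's proof and, for the measure statement, silently assumes measurability.
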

\begin{remark}
  \label{rem:critical-values}\mbox{}

  \begin{itemize}
 \item Theorem \ref{main:critical-point} (i)  was first established in \cite{Lu_Tan_Wu_2014} for $M=1$.

 \item In Lemma \ref{lem:cri-3} we present a stronger result {than} Theorem \ref{main:critical-point} (ii): for $x\in(1, x_{KL})$ we have
 \[
 0<\dim_H\uu_{q_x}\le\dim_H\ub(x)\le\max_{q\in\overline{\ub(x)}}\dim_H\uu_q<1.
 \]

 \item In contrast with {Proposition} \ref{th:critical-uq} for the univoque set $\uu_q$, Theorem \ref{main:critical-point} shows that there is no $x>0$ such that  the set $\ub(x)$ is   uncountable but has zero Hausdorff dimension.
 \end{itemize}
 \end{remark}

Recall that $\ub=\ub(1)$ has no isolated points and its closure $\overline{\ub}$ is a Cantor set. Then it is natural to ask whether this is true for $\ub(x)$?
Our {forth} main result shows that typically this is not the case.
Let
\[X_{iso}:=\set{x\in(0, \f): \ub(x)\textrm{ contains isolated points}}.\] We show that for $M=1$ the set $X_{iso}$ is dense in $(0, \f)$.
\begin{theorem}\label{main:iso-points}
 Let $M\ge 1$. The set $X_{iso}$ is dense in $[0, 1]$. {If} $M=1$, the set $X_{iso}$ is dense in $(0, \f)$.
\end{theorem}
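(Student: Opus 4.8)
The plan is to exhibit, in every neighbourhood of every point of the stated range, a number $x$ for which $\ub(x)$ has an isolated point, and to obtain such $x$ by transferring isolated points of a horizontal slice of the two‑dimensional univoque set $\mathbb U$ to a vertical slice. So I would fix a base $q_0\in(1,M+1]\setminus\overline{\ub}$ together with a point $x_0\in\uu_{q_0}$ that is an isolated point of $\uu_{q_0}$; since $x_0\in\uu_{q_0}$ we have $q_0\in\ub(x_0)$ and $q_0\le q_{x_0}$, so the correspondence underlying Theorem \ref{main:local-dim} (and Proposition \ref{prop:bi-holder}) applies at $(x_0,q_0)$. The point I would extract is that this correspondence is actually a local homeomorphism between a neighbourhood of $x_0$ in $\uu_{q_0}$ and a neighbourhood of $q_0$ in $\ub(x_0)$: along a connected component $(q_1,q_1^*)$ of $(1,M+1]\setminus\overline{\ub}$ the de Vries--Komornik admissibility conditions cutting out $\uu_q$ depend monotonically on $q$, so the unique $q_0$-expansion of $x_0$ --- which is univoque although its lexicographic neighbours are not --- can neither acquire univoque neighbours nor itself remain univoque for $q$ in a punctured neighbourhood of $q_0$. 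Hence $q_0$ is isolated in $\ub(x_0)$, i.e.\ $x_0\in X_{iso}$, and it remains to run $q_0$ over a suitable family and check that the resulting $x_0$'s are dense.

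For density in $[0,1]$ (any $M$): since $\overline{\ub}$ is nowhere dense there are $q_0\in(1,M+1)\setminus\overline{\ub}$ arbitrarily close to $M+1$, and for such $q_0$ the slice $\uu_{q_0}$ has positive Hausdorff dimension while $I_{q_0}=[0,M/(q_0-1)]\supset[0,1]$. Using the known structure of $\uu_{q_0}$ for $q_0$ near $M+1$ --- namely that its isolated points become dense in $I_{q_0}$ as $q_0\to M+1$ --- one picks, for any $t\in[0,1]$ and $\ep>0$, such a $q_0$ and an isolated point $x_0\in\uu_{q_0}\cap[0,1]$ with $|x_0-t|<\ep$; then $x_0\in X_{iso}$, which gives density of $X_{iso}$ in $[0,1]$ for every $M$.

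For density in $(0,\infty)$ when $M=1$: the half-line $[x_G,\infty)$ already lies in $X_{iso}$, because Theorem \ref{main:critical-point}(iv) gives $\ub(x)=\{q_x\}$ there and the single point of a singleton is isolated. On the interval $(1,x_G)$ I would instead use isolated points of $\uu_{q_0}$ lying near the \emph{top} $1/(q_0-1)$ of $I_{q_0}$ for $q_0$ just above $q_G=\varphi$: as $q_0$ runs over $(q_G,q_{KL})$ the quantity $1/(q_0-1)$ sweeps $(x_{KL},x_G)$ and the near-top univoque points of $\uu_{q_0}$ accumulate at $1/(q_0-1)$, so taking $q_0$ close to $1+1/t$ places such points arbitrarily near any target $t\in(1,x_G)$. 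Together with the $[0,1]$ case and the trivial behaviour on $[x_G,\infty)$, this yields density in $(0,\infty)$. (It is this near-top analysis that uses $M=1$; for general $M$ only density in $[0,1]$, via bases near $M+1$, is asserted.)

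The hardest step is the transfer: upgrading the purely \emph{dimensional} statement of Theorem \ref{main:local-dim} to a genuine local identification of $\mathbb U$ near $(x_0,q_0)$ (with $q_0\notin\overline{\ub}$) that carries ``$x_0$ isolated in the horizontal slice'' to ``$q_0$ isolated in the vertical slice''. This requires controlling, two-sidedly and quantitatively, how the admissibility of the unique $q_0$-expansion of $x_0$ fails as $q$ leaves $q_0$ inside a gap of $\overline{\ub}$ --- in effect a sharp monotone dependence of $\alpha(q)$ on $q$ along that gap --- and a secondary technical point is to verify that the isolated points of $\uu_{q_0}$ are indeed dense in $I_{q_0}$ for the families of $q_0$ used above.
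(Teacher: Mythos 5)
Your proposal captures the right intuition---produce isolated points of $\ub(x)$ by looking at where a vertical slice of $\mathbb U$ meets a ``rigid'' sequence---and the objects you would ultimately use coincide with the paper's: the isolated points of $\uu_{q_0}$ that you want, for $q_0$ in a connected component $(q_n,q_{n+1})$ of the complement of $\vb$, are precisely the images $\pi_{q_0}(\us_{q_{n+1}}^*)$, where $\us_{q_{n+1}}^*=\us_{q_{n+1}}\setminus\us_{q_n}$ consists of the sequences in $\us_{q_{n+1}}$ ending in $\al(q_n)$ or $\overline{\al(q_n)}$. However, both of your essential steps are asserted rather than established, and the second is stated in a form that is literally false.

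\emph{The transfer step.} You acknowledge this is the crux, and it is. What has to be shown is that if $\Phi_x(p)$ ends in $\al(q_n)=(w_n\overline{w_n})^\infty$ and $p'\in\ub(x)\cap(q_n,q_{n+1})$ has $\Phi_x(p')$ agreeing with $\Phi_x(p)$ for one full period past the start of the tail, then the $\us_{q_{n+1}}$-admissibility constraints, driven by the doubling $\al(q_{n+1})=(w_n\overline{w_n}^+\,\overline{w_n}w_n^-)^\infty$, force $\Phi_x(p')=\Phi_x(p)$ and hence $p'=p$. This is exactly the content of the paper's Proposition~\ref{prop:isolate-point}, and it is a genuine lemma: the ``squeeze'' that successive blocks of length $m$ must equal $w_n$ or $w_n^-$ (then $\overline{w_n}$ or $\overline{w_n}^+$) and that the perturbed choice always leads to a contradiction is not a tautology of monotonicity of $\al(q)$; it depends on the precise two-sided period-doubling structure of $\al(q_n)$ versus $\al(q_{n+1})$. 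Appealing to Theorem~\ref{main:local-dim} does not help here either, since that theorem is purely dimensional, and the local correspondence underlying its proof is one-sided and only holds modulo a countable exceptional set on the right.

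\emph{The density step.} You claim the isolated points of $\uu_{q_0}$ ``become dense in $I_{q_0}$ as $q_0\to M+1$.'' For any fixed $q_0\in(q_{KL},M+1)$ the set $\uu_{q_0}$ has Lebesgue measure zero, so its isolated points (a countable subset of $\uu_{q_0}$) are nowhere near dense in $I_{q_0}$; what is true (from de Vries--Komornik) is that $\us_{q_{n+1}}^*$ is dense in $\us_{q_{n+1}}$, hence $\pi_{q_0}(\us_{q_{n+1}}^*)$ is dense in $\uu_{q_0}$. To get density of $X_{iso}$ in $[0,1]$ one must therefore vary both the gap and the projection base: the paper's Lemma~\ref{lem:ux-dense-01} first picks $(y_i)$ whose $(M+1)$-expansion lies in $\us_q$ for all $q$ close to $M+1$, then chooses $(z_i)\in\us_{q_{n+1}}^*$ near $(y_i)$, and observes that $p\mapsto\pi_p((z_i))$ sweeps out an interval of $X_{iso}$ as $p$ runs over $(q_n,q_{n+1})$. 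Your sketch as written skips this entirely. The same issue persists in your ``near-top'' argument for $M=1$: beyond the observation that $\ub(x)$ is a singleton for $x\ge x_G$, covering $(1,x_G)$ requires an explicit construction (the Thue--Morse sequences $\c_{n,k}$, $\d_{n,k}$ of Lemmas~\ref{lem:cons-c-nk} and~\ref{lem:cons-d-nk}) and a verification that the resulting intervals overlap, which is not automatic.

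So while the strategy points in the right direction, the two load-bearing claims---the symbolic rigidity giving the transfer, and the quantitative placement of isolated points---are missing, and filling them in is where all the work of Section~\ref{sec:isolated points} actually lies.
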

\begin{remark}
  \label{rem:isolate-points}\mbox{}

  \begin{itemize}
    \item For $M\ge 1$ we show in Lemma \ref{lem:ux-dense-01} a slightly stronger property:  for any $x\in[0, 1] $ any neighborhood of $x$ in $X_{iso}$ contains an interval.

    \item For $M=1$ we show in Proposition \ref{prop:iso-bigger}  that  $X_{iso}\supset(1,\f)$. This means for any $x>1$ the set $\ub(x)$ contains isolated points.
  \end{itemize}
\end{remark}

 The rest of the paper is arranged in the following way. In the next section we introduce the greedy and quasi-greedy expansions, and present some useful properties {of unique expansions}. In Section \ref{sec:local structure of U(x)} we investigate the local dimension of $\ub(x)$ and prove Theorem \ref{main:local-dim}. Based on this we are able to calculate in Section \ref{sec:symbolic univoque bases} the Hausdorff dimension of the symbolic set $\us(x)$ and prove the irregularity of the dimensional function $x\mapsto\dim_H\us(x)$ (see Theorem \ref{main:dim-devil}). Furthermore, we determine the critical values of $\ub(x)$ {such} that when $x$ crosses the first critical value the Hausdorff dimension of $\ub(x)$ vanishes, and when $x$ crosses the second critical value the set $\ub(x)$ degenerates to a singleton (see Theorem \ref{main:critical-point}). The proof of Theorem \ref{main:iso-points}  is presented in Section \ref{sec:isolated points}. {Although} the set  $\ub(1)$ has no isolated points, we show that typically $\ub(x)$ contains isolated points.  In the final section we pose some remarks and questions on $\ub(x)$.

\section{Preliminaries}\label{sec: preliminaries}
{In this section we recall some well-known properties from unique non-integer base expansions. First we need some terminology from symbolic dynamics (cf.~\cite{Lind_Marcus_1995}). Let $\set{0,1,\ldots, M}^\N$ be the set of infinite sequences with digits from the alphabet $\set{0,1,\ldots, M}$. Denote by $\si$ the left shift on $\set{0,1,\ldots, M}^\N$ such that $\si((c_i))=(c_{i+1})$.
  By  a \emph{word} $\mathbf c=c_1\ldots c_n$ we mean a finite string of digits with each digit $c_i$  from   $\set{0,1,\ldots, M}$. Let $\set{0,1,\ldots, M}^*$ be the set of all  words including the empty word $\epsilon$. For two words $\mathbf c, \mathbf d\in\set{0,1,\ldots,M}^*$ we write $\mathbf c\mathbf d$ as a new word which is the concatenation of them. We denote by $\mathbf c^\f=\mathbf c\mathbf c\ldots\in\set{0,1,\ldots, M}^\N$ the periodic sequence which is the infinite concatenation of $\mathbf c$ with itself.  Throughout  the paper we will use the lexicographical ordering  ``$\prec, \lle, \succ$" or ``$\lge$" between sequences and words in the usual way. For example, for two sequences $(c_i), (d_i)\in\set{0, 1,\ldots, M}^\N$ we write $(c_i)\prec (d_i)$ if $c_1<d_1$, or there exists $n>1$ such that  $c_1\ldots c_{n-1}=d_1\ldots d_{n-1}$ and $c_n<d_n$. Furthermore, for two words $\mathbf c, \mathbf d$ we say $\mathbf c\prec \mathbf d$ if $\mathbf c 0^\f\prec \mathbf d 0^\f$. For a sequence $(c_i)$ we denote its \emph{reflection} by $\overline{(c_i)}=(M-c_1)(M-c_2)\ldots\in\set{0,1,\ldots, M}^\N$. Similarly, for a  word $\mathbf c=c_1\ldots c_n$ we denote its  {reflection} by $\overline{\mathbf c}:=(M-c_1)\ldots(M-c_n)$. If $c_n<M$, we write $\mathbf c^+:=c_1\ldots c_{n-1}(c_n+1)$; and if $c_n>0$, we write $\mathbf c^-:=c_1\ldots c_{n-1}(c_n-1)$. So, $\overline{\mathbf c}, \mathbf c^+$ and $\mathbf c^-$ are all words with each digit in $\set{0,1,\ldots, M}$.}

\subsection{Quasi-greedy and greedy expansions}
Let $M\ge 1$ and $x>0$. Recall from (\ref{eq:qx}) that $q_x=\min\set{1+M, 1+M/x}=\max\ub(x)$. For $q\in(1, q_x]$ let
\[
\Phi_x(q)=x_1(q)x_2(q)\ldots\in\set{0,1,\ldots, M}^\N
\] be
 the  \emph{quasi-greedy} $q$-expansion of $x$, which is the lexicographically largest $q$-expansion of $x$ not ending with $0^\f$.
 In other words, $\Phi_x(q)=(x_i(q))$ is the  $q$-expansion of $x$ satisfying
 \[
 \sum_{i=1}^n \frac{x_i(q)}{q^i}<x\quad\textrm{for all }n\ge 1.
 \]
In particular,  for $x=1$ and $q\in(1,q_1]=(1, M+1]$ we  reserve the notation
$
\alpha(q)=(\alpha_i(q))=\Phi_1(q)
$
for  the quasi-greedy $q$-expansion of $1$.

 Similarly, for $q\in(1, q_x]$ let
 \[\Psi_x(q)=\tilde x_1(q)\tilde x_2(q)\ldots\in\set{0,1,\ldots, M}^\N\] be the \emph{greedy} $q$-expansion of $x$, which is the lexicographically largest $q$-expansion of $x$. Then $\Psi_x(q)=(\tilde x_i(q))$ is the $q$-expansion of $x$ satisfying
\[
\sum_{i=1}^n\frac{\tilde x_i(q)}{q^i}+\frac{1}{q^n}>x\quad\textrm{whenever}\quad \tilde x_n(q)<M.
\]
If $x$ has a unique $q$-expansion, i.e., $q\in\ub(x)$, then $\Phi_x(q)=\Psi_x(q)$.

The following lemma for the quasi-greedy {expansion} $\Phi_x(q)$ and greedy {expansion} $\Psi_x(q)$ was essentially proven in \cite{DeVries-Komornik-2011}.
\begin{lemma}\label{l21}\mbox{}

\begin{enumerate}
\item[{\rm(i)}] Let $x>0$. Then the map $q\mapsto \Phi_x(q)$ is left continuous and strictly increasing  in $(1, q_x]$.
 Moreover, the sequence $\Phi_x(q)=(x_i(q))$ satisfies
\[
x_{n+1}(q)x_{n+2}(q)\cdots\lle \alpha(q)\quad\textrm{ whenever}\quad x_n(q)<M.
\]

\item[{\rm(ii)}] For $x>0$ the map $q\mapsto \Psi_x(q)$ is right continuous and strictly increasing in $(1, q_x]$. Moreover, the
sequence $\Psi_x(q)=(\tilde x_i(q))$ satisfies
\[
\tilde x_{n+1}(q)\tilde x_{n+2}(q)\cdots\prec \alpha(q)\quad\textrm{ whenever}\quad \tilde x_n(q)<M.
\]
\end{enumerate}
\end{lemma}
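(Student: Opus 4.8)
The plan is to reduce the four assertions to the defining properties of the greedy and quasi-greedy algorithms and then to run three essentially independent arguments --- strict monotonicity, one-sided continuity, and the tail inequalities involving $\al(q)$ --- keeping in mind that the only structural difference between the two cases is the placement of a strict versus a non-strict inequality. Throughout I will use that $\Phi_x(q)=(x_i(q))$ satisfies $\sum_{i=1}^n x_i(q)q^{-i}<x$ for all $n$ together with the maximality property $\sum_{i<n}x_i(q)q^{-i}+(x_n(q)+1)q^{-n}\ge x$ whenever $x_n(q)<M$ (the latter because $\Phi_x(q)$ is the lexicographically largest $q$-expansion of $x$ not ending in $0^\f$), while $\Psi_x(q)=(\tilde x_i(q))$ satisfies $\sum_{i=1}^n\tilde x_i(q)q^{-i}\le x$ for all $n$ together with the strict maximality $\sum_{i<n}\tilde x_i(q)q^{-i}+(\tilde x_n(q)+1)q^{-n}>x$ whenever $\tilde x_n(q)<M$. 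I also record the elementary fact that for any sequence $(d_i)\ne 0^\f$ the map $q\mapsto\sum_{i\ge1}d_iq^{-i}$ is continuous and strictly decreasing on $(1,\f)$; this forces strict monotonicity and prevents the one-sided limits below from being ``too small''.

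\emph{Monotonicity.} Fix $1<p<q\le q_x$. Equality $\Phi_x(p)=\Phi_x(q)=:(d_i)$ is impossible, since then $(d_i)\ne 0^\f$ while $\sum d_ip^{-i}=x=\sum d_iq^{-i}$ contradicts the fact just recorded. So suppose $\Phi_x(p)\succ\Phi_x(q)$ and let $n$ be the first index with $x_n(p)>x_n(q)$, so that $x_i(p)=x_i(q)$ for $i<n$ and $x_n(q)<M$. The partial-sum inequality for $\Phi_x(p)$ gives $\sum_{i<n}x_i(q)p^{-i}+x_n(p)p^{-n}<x$, the maximality inequality for $\Phi_x(q)$ gives $\sum_{i<n}x_i(q)q^{-i}+(x_n(q)+1)q^{-n}\ge x$, and since $x_n(p)\ge x_n(q)+1$, combining them yields
\[
\sum_{i<n}x_i(q)p^{-i}+(x_n(q)+1)p^{-n}<\sum_{i<n}x_i(q)q^{-i}+(x_n(q)+1)q^{-n},
\]
which contradicts $p<q$. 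Hence $\Phi_x(p)\prec\Phi_x(q)$; the same computation with the greedy partial-sum bound and the strict greedy maximality gives $\Psi_x(p)\prec\Psi_x(q)$.

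\emph{One-sided continuity.} Fix $q_0\in(1,q_x]$. As $q\mapsto\Phi_x(q)$ is increasing, the length-$n$ prefixes of $\Phi_x(p)$ are lexicographically non-decreasing in $p$, so (taking finitely many values) they are eventually constant as $p\uparrow q_0$; thus $(z_i):=\lim_{p\uparrow q_0}\Phi_x(p)$ exists in the metric $\rho$, $(z_i)\lle\Phi_x(q_0)$, and for each $n$ there is $p<q_0$ with $x_1(p)\cdots x_n(p)=z_1\cdots z_n$. Letting $p\uparrow q_0$ in $\sum_{i\le n}z_ip^{-i}<x$ gives $\sum_{i\le n}z_iq_0^{-i}\le x$ for all $n$; equality for some $n$ would force $z_i=0$ for $i>n$ and then $x=\sum_{i\le n}z_iq_0^{-i}<\sum_{i\le n}z_ip^{-i}$ for $p<q_0$ (as $x>0$), a contradiction, so $\sum_{i\le n}z_iq_0^{-i}<x$ for all $n$. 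If $(z_i)\prec\Phi_x(q_0)$, let $m$ be the first index with $z_m<x_m(q_0)$, so $z_m<M$; for $p<q_0$ close to $q_0$ we have $x_1(p)\cdots x_m(p)=z_1\cdots z_m$, and since $z_m<M$, maximality at level $m$ in base $p$ gives $\sum_{i<m}z_ip^{-i}+(z_m+1)p^{-m}\ge x$, so letting $p\uparrow q_0$ gives $\sum_{i<m}z_iq_0^{-i}+(z_m+1)q_0^{-m}\ge x$; since $z_i=x_i(q_0)$ for $i<m$ and $x_m(q_0)\ge z_m+1$, this yields $\sum_{i\le m}x_i(q_0)q_0^{-i}\ge x$, contradicting the partial-sum inequality for $\Phi_x(q_0)$. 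Hence $(z_i)=\Phi_x(q_0)$, which proves left-continuity and, as a by-product, shows that $(z_i)$ is a $q_0$-expansion of $x$, so that point needs no separate argument. Right-continuity of $\Psi_x$ follows in the same way with $p\downarrow q_0$, using the greedy inequalities, which pass to the limit directly since they are non-strict.

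\emph{Tail inequalities, and the main obstacle.} If $x_n(q)<M$, maximality gives $x-\sum_{i<n}x_i(q)q^{-i}\le(x_n(q)+1)q^{-n}$, hence $\sum_{j\ge1}x_{n+j}(q)q^{-j}=q^n\bigl(x-\sum_{i\le n}x_i(q)q^{-i}\bigr)\le 1$. Were $\si^n\Phi_x(q)\succ\al(q)$, the first index $k$ with $x_{n+k}(q)>\al_k(q)$ would satisfy $\al_k(q)<M$, and the maximality property for $\al(q)=\Phi_1(q)$ would give $\sum_{i<k}\al_i(q)q^{-i}+(\al_k(q)+1)q^{-k}\ge 1$, whence $\sum_{j\le k}x_{n+j}(q)q^{-j}\ge 1$; since $\Phi_x(q)$ has strictly increasing partial sums it does not end in $0^\f$, so $\si^n\Phi_x(q)$ has strictly positive tail beyond $k$ and $\sum_{j\ge1}x_{n+j}(q)q^{-j}>1$, a contradiction. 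Thus $\si^n\Phi_x(q)\lle\al(q)$. For $\Psi_x$, the strict greedy maximality gives $\sum_{j\ge1}\tilde x_{n+j}(q)q^{-j}<1$, which excludes $\si^n\Psi_x(q)=\al(q)$, while the same digit comparison rules out $\si^n\Psi_x(q)\succ\al(q)$; hence $\si^n\Psi_x(q)\prec\al(q)$. I expect the main obstacle to be the one-sided continuity: one must take the monotone limit of the expansion sequences in the right topology and verify that the maximality inequalities survive the limit, so that the limit sequence is pinned down to $\Phi_x(q_0)$ (resp. $\Psi_x(q_0)$) rather than merely bounded by it; everything else is routine lexicographic bookkeeping, and since all of this is essentially contained in \cite{DeVries-Komornik-2011}, the write-up can be kept brief.
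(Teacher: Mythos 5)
Your proposal is correct, but it takes a different route from the paper in the sense that the paper offers essentially no proof at all: it declares the monotonicity obvious, cites de Vries--Komornik (Lemmas 2.3 and 2.5 of \cite{DeVries-Komornik-2011}) for the one-sided continuity, and cites Baiocchi--Komornik \cite{Baiocchi_Komornik_2007} for the lexicographic tail conditions. You instead reprove everything from the two algorithmic characterizations (partial sums $<x$, resp.\ $\le x$, together with the digit-maximality inequality at every index where the digit is $<M$), which is in substance the standard argument underlying those citations; the benefit is a self-contained, elementary proof, at the cost of length. Two small points you gloss over but which are easily repaired: (1) deducing the maximality inequality $\sum_{i<n}x_i(q)q^{-i}+(x_n(q)+1)q^{-n}\ge x$ from ``lexicographically largest expansion not ending in $0^\f$'' requires checking that the would-be remainder $r=x-\sum_{i<n}x_i(q)q^{-i}-(x_n(q)+1)q^{-n}$ is representable, i.e.\ $0<r<Mq^{-n}/(q-1)$, which follows from $x-\sum_{i\le n}x_i(q)q^{-i}\le Mq^{-n}/(q-1)$; (2) the right-continuity of $\Psi_x$ is not a verbatim repetition of the left-continuity argument: there the non-strict greedy partial-sum bound passes to the limit $p\downarrow q_0$, and the contradiction with $(w_i)\succ\Psi_x(q_0)$ is obtained from the \emph{strict} maximality inequality applied at $q_0$ itself (no limiting of a strict inequality is needed), so the roles of the two inequalities are swapped rather than identical. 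With these routine details filled in, your argument establishes all four assertions.
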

\begin{proof}
{The monotonicity statements in (i) and (ii) are obvious by the definitions of $\Phi_x$ and $\Psi_x$ respectively. The continuity statements follow from  \cite[Lemmas 2.3 and 2.5]{DeVries-Komornik-2011}. Finally,
the lexicographical characterizations of $\Phi_x(q)$ and $\Psi_x(q)$   can be found in \cite{Baiocchi_Komornik_2007}.}
\end{proof}
\begin{remark}
  \label{rem:chara-alpha}
Taking $x=1$ in Lemma \ref{l21} (i) it follows that the map $q\mapsto\Phi_1(q)=\al(q)$ is left-continuous and strictly increasing in $(1, M+1]$. In particular, the quasi-greedy expansion $\al(q)=(\al_i(q))$ satisfies
$
\al_{n+1}(q)\al_{n+2}(q)\ldots\lle \al(q)$ {whenever} $ \al_n(q)<M.
$
Indeed, one can verify (see also \cite[Proposition 2.3]{deVries-Komornik-Loreti-2016}) that the map $q\mapsto\al(q)$ is bijective from $(1, M+1]$ to the set of sequences $(a_i)\in\set{0,1,\ldots, M}^\N$ not ending with $0^\f$ and satisfying
\[
a_{n+1}a_{n+2}\ldots \lle a_1a_2\ldots \quad\textrm{for all }n\ge 0.
\]
\end{remark}

{\subsection{Unique expansions}
For $q\in(1, M+1]$ we recall  the symbolic univoque set
\[
\us_q=\set{(d_i)\in\set{0,1,\ldots, M}^\N:  \pi_q((d_i))\in\uu_q},
\]
where $\pi_q$ is the projection map define in (\ref{eq:pi-q}).
Then each sequence $(d_i)\in\us_q$ is the unique $q$-expansion of $\pi_q((d_i))$. So   $\pi_q$ is {a} bijective map from $\us_q$ to $\uu_q$.
The following lexicographical characterization of $\us_q$ is given by Erd\H os et al.~\cite{Erdos_Joo_Komornik_1990} (see also  \cite{DeVries_Komornik_2008}).
\begin{lemma}
  \label{lem:chara-uq}
  Let $q\in(1, M+1]$. Then $\us_q$ consists of all sequences $(d_i)\in\set{0, 1,\ldots, M}^\N$ satisfying
  \[
  \left\{
  \begin{array}
    {lll}
    d_{n+1}d_{n+2}\ldots \prec\al(q)&\textrm{if}& d_n<M,\\
    \overline{d_{n+1}d_{n+2}\ldots} \prec \al(q)&\textrm{if}& d_n>0.
  \end{array}\right.
  \]
\end{lemma}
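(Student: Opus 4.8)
The plan is to deduce the lemma from the standard lexicographic descriptions of the greedy and lazy expansions. Recall that among all $q$-expansions of a fixed point $x\in[0,M/(q-1)]$, the greedy expansion $\Psi_x(q)$ is the lexicographically largest, while the lazy expansion (the lexicographically smallest $q$-expansion of $x$) is the smallest; both extremes are actually attained, since the greedy and lazy algorithms produce genuine expansions of $x$. Hence $x$ has a unique $q$-expansion exactly when its greedy and lazy expansions coincide. By Lemma~\ref{l21}(ii), together with the converse supplied by the characterization in \cite{Baiocchi_Komornik_2007}, a sequence $(d_i)$ is the greedy $q$-expansion of $\pi_q((d_i))$ if and only if
\[
d_{n+1}d_{n+2}\ldots\prec\alpha(q)\qquad\text{whenever }d_n<M;
\]
so the first step is to record this equivalence.

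Next I would bring the lazy expansion into the same framework via reflection. Since $\pi_q(\overline{(d_i)})=\frac{M}{q-1}-\pi_q((d_i))$ and the reflection map $(c_i)\mapsto\overline{(c_i)}$ is an order-reversing bijection of $\{0,1,\ldots,M\}^{\N}$ onto itself, it carries $q$-expansions of $x$ to $q$-expansions of $\frac{M}{q-1}-x$ and swaps the lexicographic extremes. Therefore $(d_i)$ is the lazy $q$-expansion of $\pi_q((d_i))$ if and only if $\overline{(d_i)}$ is the greedy $q$-expansion of $\pi_q(\overline{(d_i)})$, and applying the greedy characterization to $\overline{(d_i)}$, together with the equivalence $\overline{d_n}<M\iff d_n>0$, this is equivalent to
\[
\overline{d_{n+1}d_{n+2}\ldots}\prec\alpha(q)\qquad\text{whenever }d_n>0 .
\]

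Finally I would assemble the two inclusions. If $(d_i)\in\us_q$, then $(d_i)$ is the unique $q$-expansion of $x:=\pi_q((d_i))$; in particular the greedy and the lazy $q$-expansions of $x$ both equal $(d_i)$, so $(d_i)$ satisfies both displayed conditions. Conversely, if $(d_i)$ satisfies both conditions, then by the two characterizations above $(d_i)$ is simultaneously the greedy and the lazy $q$-expansion of $x:=\pi_q((d_i))$; since every $q$-expansion of $x$ lies lexicographically between the lazy and the greedy expansion, and these now coincide with $(d_i)$, the point $x$ has the single $q$-expansion $(d_i)$, that is, $(d_i)\in\us_q$. I do not expect a genuinely hard step here: the only inputs beyond Lemma~\ref{l21} are the converse half of the greedy lexicographic characterization and the fact that the greedy and lazy expansions realize the lexicographic extremes among all expansions of a given point, both of which are classical (\cite{Erdos_Joo_Komornik_1990, Baiocchi_Komornik_2007, DeVries_Komornik_2008}); the only care needed is in combining them correctly and keeping track of the reflection.
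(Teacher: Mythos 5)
Your argument is correct: the equivalence ``unique expansion $\iff$ greedy $=$ lazy,'' combined with the lexicographic characterization of greedy sequences (Lemma~\ref{l21}(ii) plus its converse from \cite{Baiocchi_Komornik_2007}) and the reflection trick for the lazy expansion, is exactly the classical proof of this characterization. The paper itself offers no proof but cites \cite{Erdos_Joo_Komornik_1990} and \cite{DeVries_Komornik_2008}, where the argument is in substance the one you give, so your proposal matches the intended route.
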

Observe that  for any $x>0$ and $q\in(1,M+1]$ we have $q\in\ub(x)$ if and only if $\Phi_x(q)\in\us_q$.
 Recall that $\ub=\ub(1)$ is the set of bases for which $1$ has a unique expansion.  Komornik and Loreti \cite{Komornik_Loreti_2007} showed that its topological closure $\overline{\ub}$ is  a Cantor set as described in (\ref{eq:11}). Motivated by the work of de Vries and Komornik \cite{DeVries_Komornik_2008} we introduce the bifurcation set $\vb$ of the set-valued map $q\mapsto\us_q$ defined by
\begin{equation}\label{eq:v}
\vb:=\set{q\in(1, M+1]: \us_r\ne\us_q~\forall r>q}.
\end{equation}
They showed in \cite{DeVries_Komornik_2008} that $\overline{\ub}\subset\vb$, and
 $\vb\setminus\overline{\ub}$ is countably infinite.

The following intimate connection between $\us_q$, $\overline{\ub}$ {and $\vb$} was established   by de Vries and Komornik \cite{DeVries_Komornik_2008} (see also \cite{deVries-Komornik-Loreti-2016}).
\begin{lemma}
  \label{lem:prop-Uq}
  Let $M\ge 1$. The following statements hold true.
  \begin{enumerate}
    \item[{\rm(i)}] The set-valued map $q\mapsto \us_q$ is non-decreasing with respect to the set-inclusion.  Furthermore, for any connected component  $(q_0, q_0^*)$   of $(1, M+1]\setminus\overline{\ub}$ and for any $p, q\in(q_0, q_0^*)$ the difference between $\us_p$ and $\us_q$ is at most countable.
    \item[{\rm(ii)}] For each connected component $(q_L, q_R)$ of $(1,{ M+1}]\setminus\vb$ the set-valued map $q\mapsto\us_q$ is constant in $(q_L, q_R]$.
  \end{enumerate}
\end{lemma}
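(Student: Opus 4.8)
The plan is to derive both statements from the lexicographic characterisation of $\us_q$ (Lemma~\ref{lem:chara-uq}), the strict monotonicity and left-continuity of $q\mapsto\al(q)$ (Remark~\ref{rem:chara-alpha}), and the description of $\overline{\ub}$ and $\vb$ recalled in and after~(\ref{eq:11}). Two elementary reductions come first. \emph{Monotonicity:} if $1<p\le q\le M+1$ then $\al(p)\lle\al(q)$, so a sequence satisfying the two inequalities of Lemma~\ref{lem:chara-uq} at level $\al(p)$ satisfies them also at level $\al(q)$; hence $\us_p\subseteq\us_q$, and for $p<q$ the increment $\us_q\setminus\us_p$ is well defined. \emph{The ``window'':} unwinding Lemma~\ref{lem:chara-uq}, a sequence $(d_i)$ belongs to $\us_q\setminus\us_p$ exactly when $(d_i)\in\us_q$ while some shift violates the characterisation at level $\al(p)$; combining these, there is $n\ge 1$ with either $d_n<M$ and $\si^n((d_i))\in[\al(p),\al(q))$, or $d_n>0$ and $\si^n((d_i))\in(\overline{\al(q)},\overline{\al(p)}]$. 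Thus every newly admitted sequence has a shift in the window $W_{p,q}:=[\al(p),\al(q))\cup(\overline{\al(q)},\overline{\al(p)}]$.

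\emph{Part~(i).} Fix a connected component $(q_0,q_0^*)$ of $(1,M+1]\setminus\overline{\ub}$ and $p<q$ inside it; by monotonicity it suffices to bound $|\us_q\setminus\us_p|$. When $q_0=1$ this is immediate from Proposition~\ref{th:critical-uq}, since $\us_q$ is countable for every $q<q_{KL}$; so assume $q_0>1$. The structural heart is the claim that only countably many sequences of $\us_{q_0^*}$ have a shift in $W_{p,q}$. Here the arithmetic of $\overline{\ub}$ enters: by~(\ref{eq:11}) the right endpoint $q_0^*$ is a de Vries--Komornik number (or $M+1$), so $\al(q_0^*)$ is the fixed point of the reflection substitution attached to the algebraic left endpoint $q_0$; writing $a_1a_2\dots a_k$ for the relevant finite prefix of $\al(q_0)$, one checks that $W_{p,q}$ lies in a finite union of length-$k$ cylinders determined by $a_1\dots a_k$ and its reflection. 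Feeding a shift lying in one of these cylinders back into the two inequalities of Lemma~\ref{lem:chara-uq} repeatedly then forces the tail of $(d_i)$ to be periodic, with period governed by $a_1\dots a_k$; hence $(d_i)$ is eventually periodic. Since there are only countably many eventually periodic sequences, $\us_q\setminus\us_p$ is countable. I expect this last combinatorial step --- turning ``a shift of $(d_i)$ lands in the window'' into ``$(d_i)$ is eventually periodic'' by exploiting the substitution structure of the gap endpoints --- to be the main obstacle; it is essentially the computation carried out in~\cite{DeVries_Komornik_2008}.

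\emph{Part~(ii).} Let $(q_L,q_R)$ be a connected component of $(1,M+1]\setminus\vb$. I would first show $q\mapsto\us_q$ is constant on $(q_L,q_R)$ by connectedness. Fix $q^\circ\in(q_L,q_R)$ and put $A:=\{t\in[q^\circ,q_R):\us_t=\us_{q^\circ}\}$; by monotonicity $A$ is an interval containing $q^\circ$. It is relatively open in $[q^\circ,q_R)$: for $t\in A$ we have $t\notin\vb$, so by~(\ref{eq:v}) there is $r>t$ with $\us_r=\us_t$, whence $[t,\min\{r,q_R\})\subseteq A$ by monotonicity. It is relatively closed: if $t_n\uparrow t^*\in[q^\circ,q_R)$ with $t_n\in A$, then $[q^\circ,t^*)\subseteq A$ and $\bigcup_{s<t^*}\us_s=\us_{q^\circ}$, and since $t^*\in(q_L,q_R)\subseteq(1,M+1]\setminus\vb\subseteq(1,M+1]\setminus\overline{\ub}$ the map $\us$ does not jump up ``from the left'' at $t^*$, i.e. $\us_{t^*}=\bigcup_{s<t^*}\us_s=\us_{q^\circ}$, so $t^*\in A$. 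Thus $A$ is nonempty and clopen in the connected set $[q^\circ,q_R)$, hence equals it; letting $q^\circ$ range over $(q_L,q_R)$ gives constancy there. Finally, $\us_{q_R}\supseteq\bigcup_{s<q_R}\us_s=\us_{q^\circ}$ by monotonicity, and for the reverse inclusion I use $q_R\notin\overline{\ub}$: within every component of $(1,M+1]\setminus\overline{\ub}$ the countable set $\vb\setminus\overline{\ub}$ accumulates at the right endpoint, so no point of $\overline{\ub}$ can be the right endpoint of a component of $(1,M+1]\setminus\vb$; hence $q_R\in\vb\setminus\overline{\ub}$, $\us$ is again left-continuous at $q_R$, and $\us_{q_R}=\bigcup_{s<q_R}\us_s=\us_{q^\circ}$. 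Therefore $\us$ is constant on $(q_L,q_R]$.

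\emph{Where the work lies.} Both parts reduce to one principle: as $q$ increases, $\us_q$ acquires new sequences only at points of $\vb$; at a point of $\overline{\ub}$ it jumps up ``from the left'' --- possibly an uncountable jump, e.g. at the Komornik--Loreti constant $\us_q$ passes from a countable to an uncountable set --- whereas at a point of $\vb\setminus\overline{\ub}$ it is left-continuous and only fails to be locally constant on the right, by a countable jump. Granting this principle together with the two facts recalled in the excerpt, $\overline{\ub}\subset\vb$ and $\vb\setminus\overline{\ub}$ countably infinite, parts (i) and (ii) follow as above. Establishing the principle --- in particular the substitution combinatorics of the gap endpoints sketched in Part~(i) --- is the substantial step; a fully detailed treatment is that of~\cite{DeVries_Komornik_2008} (see also~\cite{deVries-Komornik-Loreti-2016}).
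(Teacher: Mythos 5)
The paper offers no proof of this lemma at all: it is quoted as a known result of de Vries and Komornik \cite{DeVries_Komornik_2008} (see also \cite{deVries-Komornik-Loreti-2016}), so the only meaningful comparison is with that reference, on which you too ultimately lean for the ``substantial step''. Your elementary reductions are correct and are indeed how such a proof begins: monotonicity of $q\mapsto\us_q$ follows from Lemma \ref{lem:chara-uq} and the monotonicity of $q\mapsto\al(q)$, and any $(d_i)\in\us_q\setminus\us_p$ does have a shift in the window $[\al(p),\al(q))\cup(\overline{\al(q)},\overline{\al(p)}]$.

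Read as a self-contained argument, however, the proposal has genuine gaps, and they all amount to assuming the de Vries--Komornik structure theory you set out to reprove. In part (ii), the ``relatively closed'' step rests on the assertion that $\us$ does not jump up from the left at $t^*\notin\overline{\ub}$, i.e.\ $\us_{t^*}=\bigcup_{s<t^*}\us_s$. This is not a formal consequence of (\ref{eq:v}) or of the left-continuity of $\al$: left-continuity of $q\mapsto\us_q$ genuinely fails at some bases (at $q=M+1$ one can exhibit $(d_i)\in\us_{M+1}$ whose shifts approach $M^\f$ without reaching it, and at $q_{KL}$ the set jumps from countable to uncountable), and the reason it holds off $\overline{\ub}$ is precisely the gap structure $\vb\cap(q_0,q_0^*)=\set{q_1<q_2<\cdots}\nearrow q_0^*$ together with the constancy of $\us$ on each $(q_n,q_{n+1}]$ --- that is, the statement being proved; so the clopen argument is circular as written. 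The same unproved structure is invoked to conclude $q_R\in\vb\setminus\overline{\ub}$ (namely that $\vb\setminus\overline{\ub}$ accumulates at the right endpoint of every component of $(1,M+1]\setminus\overline{\ub}$). In part (i), the decisive combinatorial step --- that a sequence of $\us_q$ with a shift in the window must have an eventually periodic tail governed by $\al(q_0)$ --- is only gestured at, and it is exactly the core computation of \cite{DeVries_Komornik_2008}. So either cite the lemma as the paper does, in which case your sketch is a reasonable gloss on the reference, or supply these three ingredients explicitly; without them the argument does not stand on its own.
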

}

\section{Local dimension  of $\ub(x)$}\label{sec:local structure of U(x)}
In this section we will investigate the local dimension of $\ub(x)$ by showing that the map $\Phi_x$ is locally bi-H\"older continuous from $\ub(x)$ onto $\us(x)$. This provides  a good estimation for the local dimension of $\ub(x)$ via its symbolic set $\us(x)$. Based on this estimation we are able to prove the `variation principle' as described in Theorem \ref{main:local-dim}.

Recall that the symbolic set $\us(x)=\set{\Phi_x(q): q\in\ub(x)}$ and the metric $\rho$ is defined in (\ref{eq:metric-rho}). First we show that the map $\Phi_x: \ub(x)\mapsto\us(x)$ is locally bi-H\"older continuous.
\begin{proposition}
  \label{prop:bi-holder}
  Let $x>0$ and ${ 1< a < b < M+1}$. Then for any $p_1, p_2\in\ub(x) {\cap(a, b)}$,
  \begin{equation}\label{eq:inequality}
 { C_1|p_1-p_2|^{{ \frac{1}{\log a}}}\le\rho(\Phi_x(p_1), \Phi_x(p_2))\le C_2|p_1-p_2|^{{ \frac{1}{\log b}}},}
  \end{equation}
  where $C_1, C_2$ are constants independent of  $p_1$ and $p_2$.
\end{proposition}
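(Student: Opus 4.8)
The plan is to control both directions of the estimate by understanding how sensitively the quasi-greedy expansion $\Phi_x(q)=(x_i(q))$ depends on $q$, uniformly for $q$ in a compact subinterval $(a,b)$ of $(1,M+1)$. The key observation is that if $\rho(\Phi_x(p_1),\Phi_x(p_2))=(M+1)^{-n}$, then $\Phi_x(p_1)$ and $\Phi_x(p_2)$ agree on their first $n-1$ digits and differ at position $n$; I will translate this agreement/disagreement of digit strings into a comparison of the real number $x$ reconstructed in the two bases, and then into a bound on $|p_1-p_2|$.

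\emph{Upper bound for $\rho$ in terms of $|p_1-p_2|$ (equivalently, a modulus-of-continuity estimate for $\Phi_x$).} First I would recall from Lemma~\ref{l21}(i) that $q\mapsto\Phi_x(q)$ is strictly increasing, so without loss of generality $p_1<p_2$ and $\Phi_x(p_1)\prec\Phi_x(p_2)$. Suppose they first differ at index $n$, so $x_1(p_1)\ldots x_{n-1}(p_1)=x_1(p_2)\ldots x_{n-1}(p_2)=:\mathbf w$ and $x_n(p_1)<x_n(p_2)$. Writing $x=\pi_{p_j}(\Phi_x(p_j))$ for $j=1,2$ and subtracting, the common prefix $\mathbf w$ contributes a term that is Lipschitz in the base (its derivative in $q$ is bounded on $(a,b)$), while the tails contribute $O(p_j^{-n})$ each because digits are bounded by $M$ and the tail of a convergent geometric-type series starting at index $n$ is $\asymp p_j^{-n}/(p_j-1)$. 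Hence from $\pi_{p_1}(\Phi_x(p_1))=\pi_{p_2}(\Phi_x(p_2))$ one gets, after rearranging, a bound of the form $|p_1-p_2|\cdot(\text{something bounded below by a positive constant on }(a,b))\le C\,a^{-n}$, which rearranges to $\rho(\Phi_x(p_1),\Phi_x(p_2))=(M+1)^{-n}\le C_2|p_1-p_2|^{1/\log b}$; the exponent $1/\log b$ arises because $(M+1)^{-n}=(a^{-n})^{\log_{M+1}a}\cdot(\ldots)$ and one must be careful to use the worst-case base $b$ when converting $a^{-n}$ (or $p_j^{-n}$) into a power of $|p_1-p_2|$. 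Here I will want the estimate $p^{-n}\le |p_1-p_2|^{c}$ type manipulation done via logarithms, tracking that base-$M+1$ logarithms are in force throughout the paper.

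\emph{Lower bound for $\rho$ in terms of $|p_1-p_2|$.} For the other inequality I would argue in the reverse direction: if $\Phi_x(p_1)$ and $\Phi_x(p_2)$ agree up to index $n-1$, I must show $|p_1-p_2|$ cannot be too large, i.e.\ $|p_1-p_2|\le C\,b^{-n}$ roughly. The cleanest route is again through the two identities $x=\pi_{p_j}(\Phi_x(p_j))$: since both expansions share the prefix $\mathbf w$ of length $n-1$, and each tail is between $0$ and $M/(p_j-1)$, the value $x$ pins down $\sum_{i=1}^{n-1}w_i p_j^{-i}$ to within $O(p_j^{-(n-1)})$; monotonicity of $q\mapsto\sum_{i=1}^{n-1}w_i q^{-i}$ together with a uniform lower bound on its derivative on $(a,b)$ then forces $p_1$ and $p_2$ into an interval of length $O(a^{-(n-1)})$, giving $|p_1-p_2|\le C\,a^{-n}\asymp \rho(\Phi_x(p_1),\Phi_x(p_2))^{\log a}$, i.e.\ $C_1|p_1-p_2|^{1/\log a}\le\rho(\Phi_x(p_1),\Phi_x(p_2))$.

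\emph{Main obstacle.} The delicate point is obtaining a \emph{uniform} positive lower bound, over all $q\in(a,b)$ and all prefixes $\mathbf w$ that actually occur, on the relevant derivative $\big|\frac{d}{dq}\sum_{i=1}^{n-1}x_i(q)q^{-i}\big|$ — or more precisely, showing that changing the base from $p_1$ to $p_2$ while keeping $x$ fixed really does change a sufficiently long prefix of the quasi-greedy expansion. One must rule out degeneracies where the prefix sum is nearly flat in $q$; this is where I expect to use that $x>0$ (so the expansion is not eventually $0^\f$, and in fact by Lemma~\ref{l21}(i) the quasi-greedy expansion never ends in $0^\f$), guaranteeing infinitely many nonzero digits and hence that the prefix carries genuine information about the base. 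A secondary technical nuisance is bookkeeping the constants $C_1,C_2$ so they depend only on $a,b,M$ (and $x$) and not on $p_1,p_2$ — this just requires doing the geometric-series estimates with $p_j\in(a,b)$ replaced by the endpoints in the appropriate direction. Once the uniform derivative bound is in hand, both inequalities in \eqref{eq:inequality} follow by elementary rearrangement and a conversion between $a^{-n}$, $b^{-n}$ and $(M+1)^{-n}$ via logarithms.
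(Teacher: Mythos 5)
Your proposal as written has both sketches proving the same (easier) half of \eqref{eq:inequality}, and the genuinely harder half is missing. In the paragraph you label ``upper bound for $\rho$'' you derive an estimate of the form $|p_1-p_2|\cdot(\text{const})\le C\,a^{-n}$, i.e.\ $|p_1-p_2|\le C'a^{-n}$. Since $a^{-n}=(M+1)^{-n\log a}=\rho(\Phi_x(p_1),\Phi_x(p_2))^{\log a}$ (logarithms are base $M+1$ throughout), this rearranges to $\rho\ge C''|p_1-p_2|^{1/\log a}$ --- the \emph{first} inequality of \eqref{eq:inequality}, not the second; the claim that it ``rearranges to $\rho\le C_2|p_1-p_2|^{1/\log b}$'' is an algebra slip. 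Your ``lower bound'' paragraph reaches the same endpoint by a cognate route. An upper bound on $|p_1-p_2|$ can only produce a lower bound on $\rho$; nowhere do you produce a \emph{lower} bound $|p_1-p_2|\ge c\,b^{-n}$, which is what the second inequality $\rho\le C_2|p_1-p_2|^{1/\log b}$ is equivalent to.

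That second inequality is where the unique-expansion structure actually enters, and your proposal does not identify the mechanism. Subtracting the two identities $x=\pi_{p_j}(\Phi_x(p_j))$, a lower bound on $|p_1-p_2|$ requires a \emph{lower} bound on the tail $\sum_{i>n}x_i(p_2)p_2^{-i}$; the $O(p_j^{-n})$ you cite is useless, and your assertion that the tail is $\asymp p_j^{-n}/(p_j-1)$ is unjustified --- a priori this tail could be far smaller than $p_2^{-n}$. This is exactly where $p_2\in\ub(x)$ is used: $\Phi_x(p_2)\in\us_{p_2}$, and since $x_n(p_2)>x_n(p_1)\ge 0$, Lemma~\ref{lem:chara-uq} forces $\overline{x_{n+1}(p_2)x_{n+2}(p_2)\cdots}\prec\al(p_2)\lle\al(b)\prec M^\f$ (here $b<M+1$ is essential). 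One may then fix an $i_0$ depending only on $b$ such that the tail is at least $p_2^{-(n+i_0)}$, which together with $\sum_{i=1}^n x_i(p_2)\bigl(p_1^{-i}-p_2^{-i}\bigr)\le M|p_1-p_2|/((p_1-1)(p_2-1))$ yields the needed lower bound on $|p_1-p_2|$. Without both uniqueness and $b<M+1$ the estimate fails: at a discontinuity of $q\mapsto\Phi_x(q)$ the expansion can change at a bounded position under arbitrarily small perturbations of the base. Finally, the ``main obstacle'' you flag (a uniform lower bound on $\frac{d}{dq}\sum_{i<n}x_iq^{-i}$) is a red herring even for the first inequality: the paper's proof multiplies the two prefix bounds by $p_1$ and $p_2$ respectively, isolating the factor $x(p_2-p_1)$ so that only $x>0$ is needed and no derivative control is required.
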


\begin{proof}[Proof of Proposition \ref{prop:bi-holder}]
  Take $p_1, p_2\in\ub(x)\cap{(a,b)}$ with $p_1<p_2$. By Lemma \ref{l21} (i) we have $(x_i(p_1))=\Phi_x(p_1)\prec \Phi_x(p_2)=(x_i(p_2))$. Then { there exists an integer $n \ge 1$} such that
  \begin{equation}
    \label{e:k3}
    x_1(p_1)\cdots x_{n-1}(p_1)=x_1(p_2)\cdots x_{n-1}(p_2)\quad\textrm{and}\quad x_n(p_1)<x_n(p_2).
  \end{equation}
  Note that
  \[
  \sum_{i=1}^{n-1}\frac{x_i(p_1)}{p_1^i}<\sum_{i=1}^\f\frac{x_i(p_1)}{p_1^i}=x=\sum_{i=1}^\f\frac{x_i(p_2)}{p_2^i}\le\sum_{i=1}^{n-1}\frac{x_i(p_2)}{p_2^i}
  +\frac{M}{p_2^{n-1}(p_2-1)}.
  \]
  Then by (\ref{e:k3}) {it follows that}
  \[
  x(p_2-p_1)<\sum_{i=1}^{n-1}\frac{x_i(p_2)}{p_2^{i-1}}-\sum_{i=1}^{n-1}\frac{x_i(p_1)}{p_1^{i-1}}+\frac{M}{p_2^{n-2}(p_2-1)}\le\frac{M}{p_2^{n-2}(p_2-1)},
  \]
which implies
  \begin{equation}
    \label{e:k4}
    p_2-p_1<\frac{M}{p_2^{n-2}(p_2-1)x}.
  \end{equation}
{Therefore}, by (\ref{e:k3}) and (\ref{e:k4}) it follows that
  \begin{align*}
    \rho(\Phi_x(p_1), \Phi_x(p_2))^{\log a}= (M+1)^{-n\log a}= \frac{1}{a^n}
    &> \frac{1}{p_2^{n}}\\
    &> \frac{(p_2-1)x}{Mp_2^2}|p_2-p_1| \ge \frac{(a-1)x}{Mb^2}|p_2-p_1|,
  \end{align*}
This {proves the first inequality of (\ref{eq:inequality}) by taking $C_1:=(\frac{(a-1)x}{Mb^2})^{1/\log a}$}.

\medskip

{For the second inequality of (\ref{eq:inequality})
  we note that ${ b}< M+1$. So, $\al({ b})\prec \al(M+1)=M^\f$ by Lemma \ref{l21} (i)}. Then there exists $i_0\ge 1$ such that
  \begin{equation}
    \label{e:k8}
    \al_1({ b})\cdots\al_{i_0}({ b})\prec {M^{i_0}.}
  \end{equation}
   Since $p_2\in\ub(x)$, we have $\Phi_x(p_2)\in\us_{p_2}$. Then {by (\ref{e:k3}), (\ref{e:k8}) and Lemma \ref{lem:chara-uq}} it follows that
  \[
  \sum_{i=1}^n\frac{x_i(p_2)}{p_1^i}\ge\sum_{i=1}^\f\frac{x_i(p_1)}{p_1^i}=x=\sum_{i=1}^\f\frac{x_i(p_2)}{p_2^i}>\sum_{i=1}^n\frac{x_i(p_2)}{p_2^i}+\frac{1}{p_2^{n+i_0}}.
    \]
This implies
\begin{equation}\label{eq:k7'}
\frac{1}{p_2^{n+i_0}}<\sum_{i=1}^n \left(\frac{x_i(p_2)}{p_1^i}-\frac{x_i(p_2)}{p_2^i}\right)\le\sum_{i=1}^\f\left(\frac{M}{p_1^i}-\frac{M}{p_2^i}\right)=\frac{M|p_2-p_1|}{(p_1-1)(p_2-1)}.
\end{equation}
Hence,  by (\ref{e:k3}) and (\ref{eq:k7'}) it follows that
\begin{align*}
  \rho(\Phi_x(p_1), \Phi_x(p_2))^{{ \log b}}=(M+1)^{-n \log b} = \frac{1}{b^n} &< \frac{1}{p_2^n}\\
  &<\frac{M p_2^{i_0}}{(p_1-1)(p_2-1)}|p_2-p_1| \le  \frac{Mb^{i_0}}{(a-1)^2}|p_1-p_2|,
\end{align*}
 This
establishes the second inequality in (\ref{eq:inequality}) by taking $C_2:=(\frac{Mb^{i_0}}{(a-1)^2})^{1/\log b}$.
\end{proof}

The following lemma for the Hausdorff   dimension under H\"older continuous maps is well-known (cf.~\cite{Falconer_1990}).

\begin{lemma}\label{lem:32}
  Let $(X, d_1)$ and $(Y, d_2)$ be two metric spaces, and let $f:X\ra Y$. If there exist positive constants $\de, C$ and $\lambda$  such that
  $$
  d_2(f(x), f(y))\le C d_1(x, y)^\lambda
  $$
  for any $x, y\in X$ with $d_1(x, y)\le \delta$, then $\dim_H f(X)\le \frac{1}{\lambda}\dim_H X$.
\end{lemma}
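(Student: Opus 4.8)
The plan is to argue directly from the definition of Hausdorff measure via $\ep$-covers. Recall that for $s\ge 0$ and $\ep>0$ one sets
\[
\mathcal H^s_\ep(X)=\inf\set{\sum_i (\operatorname{diam} U_i)^s : X\subset\bigcup_i U_i,\ \operatorname{diam} U_i\le\ep},
\]
$\mathcal H^s(X)=\lim_{\ep\to 0}\mathcal H^s_\ep(X)$, and $\dim_H X=\inf\set{s\ge 0:\mathcal H^s(X)=0}$. If $\dim_H X=\f$ there is nothing to prove, so assume $\dim_H X<\f$ and fix any $s>\dim_H X$; then $\mathcal H^s(X)=0$.

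The key computation is the following transfer of covers. Let $\ep\in(0,\de]$ and let $\set{U_i}$ be any $\ep$-cover of $X$; replacing $U_i$ by $U_i\cap X$ we may assume $U_i\subset X$, which does not increase diameters. Then $\set{f(U_i)}$ covers $f(X)$, and by the H\"older hypothesis $\operatorname{diam} f(U_i)\le C(\operatorname{diam} U_i)^\la\le C\ep^\la$, so $\set{f(U_i)}$ is a $(C\ep^\la)$-cover of $f(X)$. Consequently
\[
\sum_i(\operatorname{diam} f(U_i))^{s/\la}\le C^{s/\la}\sum_i(\operatorname{diam} U_i)^{s},
\]
and taking the infimum over all such $\ep$-covers of $X$ gives $\mathcal H^{s/\la}_{C\ep^\la}(f(X))\le C^{s/\la}\,\mathcal H^{s}_{\ep}(X)$. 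Letting $\ep\to 0$ (so that $C\ep^\la\to 0$ as well) yields $\mathcal H^{s/\la}(f(X))\le C^{s/\la}\mathcal H^{s}(X)=0$. Hence $\dim_H f(X)\le s/\la$. Since $s>\dim_H X$ was arbitrary, letting $s\downarrow\dim_H X$ gives $\dim_H f(X)\le\frac1\la\dim_H X$, as claimed.

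I do not expect any serious obstacle here: the only points requiring a little care are (a) the requirement $\operatorname{diam} U_i\le\de$ needed to invoke the H\"older bound, which is harmless because Hausdorff measure is computed in the limit $\ep\to 0$; (b) passing to covers by subsets of $X$ so that $f$ is defined on them; and (c) the trivial case $\dim_H X=\f$. Everything else is a routine manipulation of the defining infima.
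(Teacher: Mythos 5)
Your proof is correct and is precisely the standard cover-transfer argument from Falconer's book, which is exactly the reference the paper cites for this lemma (the paper itself gives no proof, stating the result as well known). Nothing to add: the handling of the $\delta$-threshold by taking $\ep\le\de$, the restriction of covers to subsets of $X$, and the passage from $\mathcal H^{s/\la}_{C\ep^\la}(f(X))\le C^{s/\la}\mathcal H^s_\ep(X)$ to the limit $\ep\to 0$ are all correct and are the standard steps.
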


By Proposition \ref{prop:bi-holder} and Lemma \ref{lem:32} we have the following {estimation} for the local dimension of $\ub(x)$, which  states that the local dimension of $\ub(x)$ at any point $q\in(1, M+1)$ can be roughly estimated by the local dimension of the symbolic set $\us(x)$ at  $\Phi_x(q)$.

{\begin{proposition}
  \label{prop:31}
  Let $x>0$ and ${1<a < b< M+1}$. Then
  \[
  {\frac{\dim_H \Phi_x\big(\ub(x)\cap(a, b)\big)}{\log b}\le\dim_H\big(\ub(x)\cap(a, b)\big)\le\frac{\dim_H \Phi_x\big(\ub(x)\cap(a, b)\big)}{\log a}. }
  \]
\end{proposition}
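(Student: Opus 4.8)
The plan is to deduce Proposition~\ref{prop:31} directly from the bi-H\"older estimate of Proposition~\ref{prop:bi-holder} together with the standard transfer Lemma~\ref{lem:32}. Write $E:=\ub(x)\cap(a,b)$, regarded as a metric space with the Euclidean metric, and $\Phi_x(E)\subset\set{0,1,\ldots,M}^\N$ with the metric $\rho$ from \eqref{eq:metric-rho}. Proposition~\ref{prop:bi-holder} says that for all $p_1,p_2\in E$,
\[
C_1|p_1-p_2|^{1/\log a}\le \rho(\Phi_x(p_1),\Phi_x(p_2))\le C_2|p_1-p_2|^{1/\log b}.
\]
In particular $\Phi_x$ is injective on $E$, so its inverse $\Psi:=\Phi_x^{-1}:\Phi_x(E)\to E$ is well defined.

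First I would apply Lemma~\ref{lem:32} to the map $\Phi_x:E\to\Phi_x(E)$. The right-hand inequality above gives $\rho(\Phi_x(p_1),\Phi_x(p_2))\le C_2|p_1-p_2|^{1/\log b}$ for \emph{all} $p_1,p_2\in E$ (so the hypothesis holds with any $\delta$, say $\delta=1$), with H\"older exponent $\lambda=1/\log b$. Hence Lemma~\ref{lem:32} yields
\[
\dim_H\Phi_x(E)\le \log b\cdot\dim_H E,
\]
which rearranges to the left inequality of the proposition, $\dim_H\Phi_x(E)/\log b\le\dim_H E$.

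Next I would apply Lemma~\ref{lem:32} in the other direction, to $\Psi:\Phi_x(E)\to E$. The left-hand inequality of Proposition~\ref{prop:bi-holder}, rewritten with $s_j=\Phi_x(p_j)$, says $|p_1-p_2|\le C_1^{-\log a}\,\rho(s_1,s_2)^{\log a}$, i.e. $d_{\mathrm{Eucl}}(\Psi(s_1),\Psi(s_2))\le C\,\rho(s_1,s_2)^{\log a}$ with $C=C_1^{-\log a}$ and exponent $\lambda=\log a$, valid for all $s_1,s_2\in\Phi_x(E)$. Lemma~\ref{lem:32} then gives $\dim_H\Psi(\Phi_x(E))\le(1/\log a)\dim_H\Phi_x(E)$, and since $\Psi(\Phi_x(E))=E$ this is exactly the right inequality $\dim_H E\le\dim_H\Phi_x(E)/\log a$. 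Combining the two displays finishes the proof.

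I do not anticipate a genuine obstacle here: the statement is a bookkeeping consequence of the two one-sided H\"older bounds, which have already been proved. The only points requiring a little care are purely formal — noting that the bounds in Proposition~\ref{prop:bi-holder} hold for all pairs in $E$ (so the ``$d_1(x,y)\le\delta$'' hypothesis of Lemma~\ref{lem:32} is vacuously satisfiable on the bounded set $(a,b)$), checking that $\Phi_x$ restricted to $E$ is a bijection onto $\Phi_x(E)$ so that $\Psi$ makes sense, and keeping the two H\"older exponents $1/\log b$ and $\log a$ straight so the inequalities land on the correct sides. Since $1<a<b<M+1$, both $\log a$ and $\log b$ are positive (base-$(M+1)$ logarithms), so no sign issues arise when dividing.
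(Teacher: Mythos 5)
Your proposal is correct and follows essentially the same route as the paper: the paper's proof likewise observes that $\Phi_x$ is a bijection from $\ub(x)\cap(a,b)$ onto its image and then invokes Proposition~\ref{prop:bi-holder} together with Lemma~\ref{lem:32} in both directions. The only cosmetic difference is that the paper explicitly sets aside the trivial finite case, which your argument handles automatically since a finite set has Hausdorff dimension zero.
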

\begin{proof}
Excluding the trivial case we assume that $\ub(x)\cap {(a, b)}$ contains infinitely many elements.
     Note that   the map
  \begin{align*}
  \Phi_x: \ub(x)\cap {(a, b)} & \longrightarrow  {\Phi_x(\ub(x)\cap(a, b))};\qquad
  p \mapsto  \Phi_x(p)
  \end{align*}
  is bijective.
   Then its inverse map $\Phi_x^{-1}$ is well-defined. Hence, the proposition follows by Proposition \ref{prop:bi-holder} and Lemma \ref{lem:32}.
\end{proof}}

{To prove Theorem \ref{main:local-dim} we still need the following lemma.
\begin{lemma}
  \label{lem:inequality-1}
  Fix $q\in(1,M+1)$. There exist constants $C_1, C_2>0$ such that for any $\mathbf c=(c_i), \mathbf d=(d_i)\in\us_q$ we have
  \begin{equation}\label{eq:holder}
  C_1\cdot\rho(\mathbf c, \mathbf d)^{\log q}\le |\pi_q(\mathbf c)-\pi_q(\mathbf d)|\le C_2\cdot\rho(\mathbf c, \mathbf d)^{\log q}.
  \end{equation}
\end{lemma}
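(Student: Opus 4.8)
The plan is to prove the two-sided estimate \eqref{eq:holder} directly from the lexicographic characterization of $\us_q$ in Lemma \ref{lem:chara-uq}, exploiting that the common prefix length of $\mathbf c$ and $\mathbf d$ controls both the symbolic metric $\rho$ and the Euclidean distance $|\pi_q(\mathbf c)-\pi_q(\mathbf d)|$ up to multiplicative constants. Write $n=\inf\set{i\ge 1: c_i\ne d_i}$, so that $\rho(\mathbf c,\mathbf d)=(M+1)^{-n}$ and hence $\rho(\mathbf c,\mathbf d)^{\log q}=q^{-n}$ (recall the paper uses base $M+1$ logarithms). Without loss of generality assume $\mathbf c\prec\mathbf d$, so $c_n<d_n$ and $c_1\cdots c_{n-1}=d_1\cdots d_{n-1}$. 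Then
\[
\pi_q(\mathbf d)-\pi_q(\mathbf c)=\sum_{i=n}^\f\frac{d_i-c_i}{q^i}
=\frac{1}{q^{n-1}}\left(\frac{d_n-c_n}{q}+\sum_{i=n+1}^\f\frac{d_i-c_i}{q^{i-n+1}}\right).
\]
The point is to bound the parenthesized quantity from above and below by positive constants depending only on $q$ and $M$.

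For the upper bound in \eqref{eq:holder} this is immediate and does not even need the univoque hypothesis: since $d_n-c_n\le M$ and each $d_i-c_i\le M$, the tail is at most $\frac{M}{q}+\frac{M}{q}\cdot\frac{1}{q-1}=\frac{M}{q-1}$, giving $|\pi_q(\mathbf c)-\pi_q(\mathbf d)|\le \frac{M}{q-1}q^{-(n-1)}=\frac{Mq}{q-1}\,\rho(\mathbf c,\mathbf d)^{\log q}$, so $C_2:=\frac{Mq}{q-1}$ works. For the lower bound I would use that $\mathbf c,\mathbf d\in\us_q$ forces the tails to be lexicographically trapped: applying Lemma \ref{lem:chara-uq} to $\mathbf c$ at position $n$ (note $c_n<d_n\le M$, so $c_n<M$) gives $c_{n+1}c_{n+2}\ldots\prec\al(q)$, hence $\sum_{i=n+1}^\f (c_i-c_n')/q^{\,\cdot}$ — more precisely, $\sum_{i\ge 1}c_{n+i}q^{-i}<\sum_{i\ge 1}\al_i(q)q^{-i}=1$, so $\sum_{i=n+1}^\f c_i q^{-(i-n)}<1$. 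Similarly, since $d_n>c_n\ge 0$, Lemma \ref{lem:chara-uq} applied to $\mathbf d$ gives $\overline{d_{n+1}d_{n+2}\ldots}\prec\al(q)$, i.e. $\sum_{i\ge 1}(M-d_{n+i})q^{-i}<1$, equivalently $\sum_{i=n+1}^\f d_i q^{-(i-n)}>\frac{M}{q-1}-1$. Combining, the bracketed expression above is at least
\[
\frac{d_n-c_n}{q}+\frac{1}{q}\left(\sum_{i=n+1}^\f\frac{d_i}{q^{i-n}}-\sum_{i=n+1}^\f\frac{c_i}{q^{i-n}}\right)
>\frac{1}{q}+\frac{1}{q}\left(\Big(\tfrac{M}{q-1}-1\Big)-1\right)=\frac{1}{q}\left(\frac{M}{q-1}-1\right).
\]
Since $q<M+1$ we have $\frac{M}{q-1}-1>0$, so this is a strictly positive constant $\kappa(q,M)$, and therefore $|\pi_q(\mathbf c)-\pi_q(\mathbf d)|> \kappa(q,M)\,q^{-(n-1)}=q\,\kappa(q,M)\,\rho(\mathbf c,\mathbf d)^{\log q}$, giving $C_1:=q\,\kappa(q,M)$.

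I do not expect a genuine obstacle here; this is a routine "Hölder equivalence of the symbolic and metric structures on $\us_q$" argument. The one point requiring care is the lower bound: a crude termwise estimate on $\sum_{i=n+1}^\f(d_i-c_i)q^{-(i-n)}$ would allow this quantity to be as negative as $-\frac{M}{q-1}$, which could in principle cancel the leading gap $\frac{d_n-c_n}{q}\ge\frac1q$; the univoque constraints from Lemma \ref{lem:chara-uq} are exactly what rule this out, forcing the tail of $\mathbf c$ to stay below $1$ and the tail of $\mathbf d$ to stay above $\frac{M}{q-1}-1$, and it is crucial that $q$ is bounded away from $M+1$ (the constant $\kappa(q,M)$ degenerates as $q\to M+1$, consistent with the hypothesis $q\in(1,M+1)$). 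It would be cleanest to record the two tail inequalities as a short preliminary observation and then assemble the final bounds in one display.
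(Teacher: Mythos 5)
Your proof is correct, and it takes a genuinely different route from the paper's. The paper proves the lemma in one line by citing an external result: it observes that $\rho_q(\mathbf c,\mathbf d):=q^{-\inf\{i:c_i\ne d_i\}}$ satisfies $\rho_q=\rho^{\log q}$ and then invokes the bi-Lipschitz equivalence of $\pi_q:(\us_q,\rho_q)\to(\uu_q,|\cdot|)$ from Allaart's paper (his Lemma 2.2), so the inequality \eqref{eq:holder} is just a change of metric. You instead give a self-contained, elementary computation directly from the lexicographic characterization of $\us_q$. Both the decomposition into prefix and tail, and the identification of exactly which univoque constraints prevent tail cancellation (the tail of $\mathbf c$ stays below $1$, the tail of $\mathbf d$ stays above $\tfrac{M}{q-1}-1$, because $c_n<M$ and $d_n>0$ respectively), are correct and yield explicit constants $C_1=\tfrac{M}{q-1}-1$ and $C_2=\tfrac{Mq}{q-1}$. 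The one step you should flesh out is the inference ``$c_{n+1}c_{n+2}\ldots\prec\al(q)$, hence $\sum_{i\ge1}c_{n+i}q^{-i}<1$'': lexicographic $\prec$ does not in general imply the strict $\pi_q$-inequality (e.g.\ $011^\infty\prec(10)^\infty=\al(\varphi)$ yet $\pi_\varphi(011^\infty)=1$ for $M=1$ and the golden ratio $\varphi$). The inference is valid here because $\us_q$ is shift-invariant, so $\si^n(\mathbf c)\in\us_q$ is the unique (hence greedy) $q$-expansion of its value, and the greedy property at position $n$ (with $c_n<M$) directly gives $\sum_{i>n}c_iq^{n-i}<1$; the symmetric lazy argument handles $\mathbf d$. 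With that justification recorded, your argument is complete; its advantage over the paper's is that it is fully self-contained with explicit constants, while the paper's is shorter but outsources the bi-Lipschitz estimate.
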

\begin{proof}
 Define a metric $\rho_q$ on $\us_q$ by
 \[\rho_q(\mathbf c,\mathbf d)=q^{-\inf\set{i\ge 1: c_i\ne d_i}}\]
  for any $\mathbf c,\mathbf d\in\us_q$. Then the map $\pi_q: (\us_q, \rho_q)\to(\uu_q, |\cdot|)$ is bi-Lipschitz (cf.~\cite[Lemma 2.2]{Allaart-2017}). Note that $\rho_q(\mathbf c,\mathbf d)=\rho(\mathbf c,\mathbf d)^{\log q}$. Then (\ref{eq:holder}) follows by Lemma \ref{lem:32}.
\end{proof}}

{
\begin{proof}
  [Proof of Theorem \ref{main:local-dim}]
   {Let $x>0$ and $q\in(1, q_x]\setminus\overline{\ub}$. Note by (\ref{eq:v}) that $\overline{\ub}\subset\vb$ and the difference $\vb\setminus\ub$ is countable. Then there exists a  $\de>0$ such that $q-\de>1$ and $(q-\de, q)\cap{\vb}=\emptyset$.}
   So, by Lemma \ref{l21} (i) and Lemma \ref{lem:prop-Uq}   it follows that  each $p\in\ub(x)\cap(q-\de, q)$ determines a unique $y=\pi_q(\Phi_x(p))\in\uu_q\cap(x-\eta, x)$ for some $\eta>0$ depending on $\de$. This defines a bijection
  \[
  \phi:~\ub(x)\cap(q-\de, q)\to\uu_q\cap(x-\eta, x);\quad p\mapsto\pi_q(\Phi_x(p)).
  \]
   {If the set $\ub(x)\cap(q-\de, q)$ is empty, then  so is $\uu_q\cap(x-\eta, x)$. In this case, it is trivial that
      \begin{equation}
      \label{eq:june-6-1}
      \lim_{\de\to 0}\dim_H(\ub(x)\cap(q-\de, q))=\lim_{\eta\to 0}\dim_H(\uu_q\cap(x-\eta, x)),
    \end{equation}and the limit is equal to zero.}
  In the following we assume $\ub(x)\cap(q-\de, q)\ne \emptyset$. Then we claim that $\phi$ is nearly bi-Lipschitz.

  Let  $p_1, p_2\in\ub(x)\cap(q-\de, q)$. Then by (\ref{eq:inequality}) and (\ref{eq:holder}) it follows that there exist constants $D_1, D_2>0$ such that
  \[
  {D_1|p_1-p_2|^{\frac{\log q}{\log (q-\delta)}}\le|\phi(p_1)-\phi(p_2)|\le D_2|p_1-p_2|. }
  \]
  By Lemma \ref{lem:32} this implies
  \[ {
  \dim_H(\uu_q\cap(x-\eta, x))\le\dim_H(\ub(x)\cap(q-\de, q))\le\frac{\log q}{\log (q-\delta)} \dim_H(\uu_q\cap(x-\eta, x)). }
    \]
    Letting $\de\to 0$, which implies $\eta\to 0$, we then establish (\ref{eq:june-6-1}) for any $x>0$ and $q\in(1,q_x]\setminus\overline{\ub}$.

\medskip

   {On the other hand, note that $q \in (1,q_x] \setminus\overline{\ub}$. If $q=q_x\notin\overline{\ub}$, then $q_x=1+M/x<M+1$. So, $\Psi_x(q_x)=M^\f$, and thus $x$ is the largest element of $\uu_{q_x}$. Note  that $q_x=\max\ub(x)$. Then it is clear that
   \begin{equation}
     \label{eq:june-111}
     \ub(x)\cap(q_x, q_x+\de)=\uu_{q_x}\cap(x, x+\zeta)=\emptyset\quad\textrm{for any}~\de, \zeta>0.
   \end{equation}}In the following we assume $q\in(1, q_x)\setminus\overline{\ub}$.  Choose   $\delta>0$   such that $q+\de<q_x$ and $(q, q+\de)\cap\overline{\ub}=\emptyset$. Let
\[
\Gamma_{x, q,\de}:=\set{r\in\ub(x)\cap(q, q+\de): \Psi_x(r)\in\us_q}.
\]
By Lemma \ref{l21} (ii) and Lemma \ref{lem:prop-Uq} (i) it follows that the difference between $\Gamma_{x,q,\de}$ and $\ub(x)\cap(q, q+\de)$ is at most countable. So they have the same Hausdorff dimension. Observe that each $r\in\Gamma_{x,q,\de}$ determines a unique $z=\pi_q(\Psi_x(r))\in\uu_q\cap(x,x+\zeta)$ for some $\zeta>0$ depending on $\de$. This defines a bijection
\[
\psi:\Gamma_{x,q,\de}\to\uu_q\cap(x,x+\zeta);\quad r\mapsto\pi_q(\Psi_x(r)).
\]
Hence, by (\ref{eq:inequality}) and (\ref{eq:holder}) we can prove that $\psi$ is nearly bi-Lipschitz, and then by the same argument as in the proof of (\ref{eq:june-6-1}) we   conclude that
\[
\lim_{\de\to 0}\dim_H(\ub(x)\cap(q, q+\de))=\lim_{\de\to 0}\dim_H\Gamma_{x,q,\de}=\lim_{\zeta\to 0}\dim_H(\uu_q\cap(x, x+\zeta)).
\]
This, together with (\ref{eq:june-6-1}) and (\ref{eq:june-111}), completes the proof.
\end{proof}}

\section{Hausdorff dimension and critical values of $\us(x)$}\label{sec:symbolic univoque bases}

Given $x>0$, recall that the symbolic set
$
\us(x)=\set{\Phi_x(q): q\in\ub(x)}
$
consists of all unique expansions of $x$ with bases in $\ub(x)$. Clearly, $\Phi_x$ is a bijective  map from $\ub(x)$ to $\us(x)$. Instead of looking at the set $\ub(x)$ directly we focus on the symbolic set $\us(x)$. In this section we will investigate the Hausdorff dimension of $\us(x)$ with respect to the metric $\rho$ defined in (\ref{eq:metric-rho}), and   prove Theorem \ref{main:dim-devil}. Furthermore, by using Theorem  \ref{main:dim-devil} and {Proposition} \ref{th:critical-uq} we determine the critical values of $\ub(x)$, and then prove   Theorem \ref{main:critical-point}.

\subsection{Hausdorff dimension of $\us(x)$} Our first result   states that the set-valued map $x\mapsto \us(x)$ is non-increasing on $(1,\f)$ with respect to the set inclusion.
{\begin{lemma}\label{lem:decreasing-u(x)}
{The set-valued map $x\mapsto \us(x)$ is non-increasing on $(1, \f)$. }
\end{lemma}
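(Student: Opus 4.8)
The plan is to prove that $\us(x_1)\supseteq\us(x_2)$ whenever $1<x_1<x_2$, by taking an arbitrary sequence $(d_i)\in\us(x_2)$ and producing a base $q\in\ub(x_1)$ with $\Phi_{x_1}(q)=(d_i)$. Fix $(d_i)\in\us(x_2)$; by definition there is some $p\in\ub(x_2)$ with $\Phi_{x_2}(p)=(d_i)$, so $(d_i)\in\us_p$ and $\pi_p((d_i))=x_2$. The key observation is that the lexicographic characterization of $\us_p$ in Lemma \ref{lem:chara-uq} involves only the sequence $(d_i)$ and the base $p$ (through $\al(p)$), not the value $x_2$. So $(d_i)\in\us_p$ holds, and since the constraints in Lemma \ref{lem:chara-uq} become \emph{weaker} as the base decreases (by Lemma \ref{l21}(i), $q\mapsto\al(q)$ is strictly increasing, so $\us_q$ is non-decreasing in $q$ by Lemma \ref{lem:prop-Uq}(i)), the sequence $(d_i)$ continues to be a unique expansion in every base $q\le p$.

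Next I would pin down the right base. Consider the function $g:q\mapsto\pi_q((d_i))=\sum_{i\ge1}d_i q^{-i}$ on the interval $(1,p]$. This function is continuous and strictly decreasing in $q$, with $g(p)=x_2$. As $q\downarrow 1$ we have $g(q)\to\sum_{i\ge1}d_i$, which is $+\infty$ unless $(d_i)$ is eventually $0$; but a sequence ending in $0^\infty$ cannot lie in $\us_p$ unless it is $0^\infty$ itself (and $0^\infty$ corresponds to $x=0$, excluded since $x_2>1$), so in fact $(d_i)$ has infinitely many nonzero digits and $g(q)\to+\infty$ as $q\downarrow 1$. Hence by the intermediate value theorem there is a unique $q\in(1,p)$ with $g(q)=x_1$, i.e. $\pi_q((d_i))=x_1$. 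Since $q<p$ and $(d_i)\in\us_p$, the monotonicity of $q\mapsto\us_q$ gives $(d_i)\in\us_q$, so $(d_i)$ is the \emph{unique} $q$-expansion of $x_1$; therefore $q\in\ub(x_1)$ and $\Phi_{x_1}(q)=(d_i)$, which shows $(d_i)\in\us(x_1)$.

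One technical point to handle carefully is that $q$ must lie in the admissible range $(1,q_{x_1}]$ for $\Phi_{x_1}(q)$ to be defined; this is automatic because $q$ was produced with $\pi_q((d_i))=x_1$, so $x_1\le M/(q-1)$, i.e. $q\le 1+M/x_1$, and trivially $q\le M+1$, hence $q\le q_{x_1}$. I should also double-check the edge case where $(d_i)$ might end in $M^\infty$: such a sequence fails the second condition of Lemma \ref{lem:chara-uq} (taking $d_n>0$) unless it equals $M^\infty$, and $M^\infty\in\us_q$ only for $q=M+1$, where it is the expansion of $M/(M)=1$ in that base... but $x_2>1$ rules this out too, so no exceptional sequences arise. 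The main obstacle, such as it is, is simply making rigorous the claim that membership in $\us_q$ is monotone in $q$ and that a unique-expansion sequence for $x_2>1$ cannot terminate; both follow cleanly from Lemma \ref{lem:chara-uq} and Lemma \ref{l21}(i) together with Lemma \ref{lem:prop-Uq}(i), so the argument is short once these are invoked.
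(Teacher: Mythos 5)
Your argument has the direction of the base backwards, and this is a genuine error, not a typo. Take $x_1<x_2$ and $(d_i)\in\us(x_2)$ with $\pi_p((d_i))=x_2$. You correctly note that $g(q)=\pi_q((d_i))$ is strictly \emph{decreasing} in $q$ with $g(p)=x_2$ and $g(q)\to+\infty$ as $q\downarrow 1$. But then on $(1,p)$ the range of $g$ is $(x_2,\infty)$, and since $x_1<x_2$ there is \emph{no} $q\in(1,p)$ with $g(q)=x_1$ — the intermediate value theorem gives you nothing here. To hit the smaller target value $x_1$ you must take $q>p$, i.e.\ move $q$ toward $M+1$, not toward $1$.

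This sign error is masked by a second, compensating misreading of the monotonicity: you assert that the constraints of Lemma \ref{lem:chara-uq} ``become weaker as the base decreases,'' while simultaneously (and correctly) citing that $q\mapsto\al(q)$ is increasing and that $q\mapsto\us_q$ is \emph{non-decreasing}. These two claims contradict each other: since $\al(q)$ shrinks as $q$ decreases, the lexicographic constraints $d_{n+1}d_{n+2}\ldots\prec\al(q)$ become \emph{stronger}, which is exactly why $\us_q$ is non-decreasing in $q$. Consequently, for $q<p$ you only have $\us_q\subseteq\us_p$, which does not let you transfer $(d_i)\in\us_p$ down to $\us_q$. Once both errors are fixed — produce $q>p$ with $\pi_q((d_i))=x_1$, then use $\us_p\subseteq\us_q$ to conclude $(d_i)\in\us_q$ — the argument is correct and is precisely the paper's proof (the paper writes it with $y<x$ and $\beta>q$, but it is the same computation). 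You should also check, as the paper implicitly does, that the required $q$ actually lies in $(1,M+1]$; this holds because $x_1>1$ forces $\pi_{M+1}((d_i))\le\pi_p((d_i))=x_2$ and one can interpolate, or more directly because $q\le 1+M/x_1\le M+1$.
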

\begin{proof}
  Let $x\in(1, \f)$ and  $(d_i)\in\us(x)$. Then there exists a unique base $q\in\ub(x)\subseteq(1, M+1)$ such that
  \begin{equation}\label{eq:june-1}
    (d_i)=\Phi_x(q)\in\us_q.
  \end{equation}
  Take  $y\in (1,x)$. Then the equation
  \begin{equation}\label{eq:june-2}
  y=\sum_{i=1}^\f\frac{d_i}{\beta^i}
  \end{equation}
  determines a unique base $\beta\in(q, M+1)$. Observe by Lemma \ref{lem:prop-Uq} (i) that the set-valued map $q\mapsto \us_q$ is non-decreasing. Then by (\ref{eq:june-1}) it follows that $(d_i)\in\us_q\subset\us_{\beta}$. In view of (\ref{eq:june-2}) this implies that
  \[
  \Phi_y(\beta)=(d_i)\in\us_\beta.
  \]
So, $(d_i)\in\us(y)$, and thus $\us(x)\subseteq\us(y)$. This completes the proof.
\end{proof}}

Now we turn to the Hausdorff dimension of $\us(x)$.
This is based on the following lemma.
\begin{lemma}
\label{lem:subset-u-x}
Given $x\in { (0,1)}$, let $(\ep_i)=\Phi_x(M+1)$ be the quasi-greedy expansion of $x$ in base $M+1$. Then there exist a word $\mathbf w$, a {positive} integer $N$ and a strictly increasing sequence $(N_j)\subset\N$ such that
\[
\us_{N_j}(x)\subset\us(x)\quad \textrm{for all }j\ge 1,
\]
where
\[
\us_{N_j}(x):=\set{\ep_1\ldots \ep_{N+N_j} \mathbf w  d_1d_2\ldots:~ d_{n+1}\ldots d_{n+N_j}\notin\set{0^{N_j}, M^{N_j}}~\forall n\ge 0}.
\]
\end{lemma}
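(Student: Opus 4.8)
The plan is to construct, for $x\in(0,1)$, a subfamily of bases $q$ close to $M+1$ whose unique $q$-expansions of $x$ are precisely the sequences described by $\us_{N_j}(x)$, and then to verify that these sequences really are unique $q$-expansions. First I would recall that by Lemma~\ref{l21}(i) the quasi-greedy expansion $(\ep_i)=\Phi_x(M+1)$ satisfies $\ep_{n+1}\ep_{n+2}\ldots\lle\al(M+1)=M^\f$ and does not end in $0^\f$; since $x<1$ we have $\ep_1<M$, and since $x>0$ the sequence $(\ep_i)$ is not identically $0$. The idea is that for a base $q$ slightly below $M+1$ the quasi-greedy expansion $\Phi_x(q)$ agrees with $(\ep_i)$ on a long prefix $\ep_1\ldots\ep_{N+N_j}$, whose length grows as $q\uparrow M+1$ (this uses left-continuity and strict monotonicity of $q\mapsto\Phi_x(q)$ from Lemma~\ref{l21}(i)). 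The word $\mathbf w$ and the constant $N$ are chosen as follows: $\al(q)$ is close to $M^\f$ when $q$ is close to $M+1$, so there is a fixed word $\mathbf w$ with $\overline{\mathbf w}$ ``small'' (e.g.\ $\mathbf w=M^k$ for suitable $k$, or more carefully a word forcing the admissibility inequalities of Lemma~\ref{lem:chara-uq}) such that, after reading the prefix $\ep_1\ldots\ep_{N+N_j}\mathbf w$, the remaining tail only has to avoid the blocks $0^{N_j}$ and $M^{N_j}$ to guarantee uniqueness. Concretely, $N$ is picked so that $\ep_1\ldots\ep_N$ already records enough of $(\ep_i)$ to pin down a base interval, and $\mathbf w$ provides a ``buffer'' so the lexicographic conditions $d_{n+1}d_{n+2}\ldots\prec\al(q)$ and $\overline{d_{n+1}d_{n+2}\ldots}\prec\al(q)$ at positions inside the prefix are automatically satisfied.

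The key steps, in order, are: (1)~For each large $j$, choose $N_j$ so that $(M+1)^{-(N+N_j)}$ is much smaller than the gap forced by the tail constraint, and choose a base $q_j\in\overline{\ub}$ (or in the bifurcation set $\vb$) with $q_j<M+1$, $q_j\uparrow M+1$, such that $\al(q_j)$ begins with $M^{N_j}$ but $\al(q_j)\prec M^\f$; the existence of such $q_j$ follows from Remark~\ref{rem:chara-alpha} together with the density of $\overline{\ub}$ near $M+1$ (from the introduction). (2)~Check that $\Phi_x(q_j)$ starts with $\ep_1\ldots\ep_{N+N_j}$: this is where left-continuity of $q\mapsto\Phi_x(q)$ at $M+1$ is used, taking $q_j$ close enough to $M+1$ and, if necessary, enlarging $N_j$. (3)~Take any $(d_i)$ avoiding $0^{N_j}$ and $M^{N_j}$ as blocks, set $(c_i)=\ep_1\ldots\ep_{N+N_j}\mathbf w\, d_1 d_2\ldots$, and verify that $(c_i)\in\us_{q_j}$ by checking the two inequalities of Lemma~\ref{lem:chara-uq} at every shift $n$: inside the prefix $\ep_1\ldots\ep_{N+N_j}$ the inequality $\si^n(c)\prec\al(q_j)$ follows from $\ep_{n+1}\ldots\lle M^\f$ and the fact that $\al(q_j)$ starts with a very long block of $M$'s followed by something, while the reflected inequality is handled by the buffer word $\mathbf w$ (which, being reached after at most $N+N_j+|\mathbf w|$ steps, forces $\overline{\si^n(c)}$ to drop below $\al(q_j)$); inside the tail $d_1d_2\ldots$ the constraint that neither $0^{N_j}$ nor $M^{N_j}$ appears as a block forces both $\si^n(c)\prec M^{N_j}0\ldots\prec\al(q_j)$ and its reflection, since $\al(q_j)$ agrees with $M^\f$ for $N_j$ digits. (4)~Conclude $\pi_{q_j}((c_i))$ has $(c_i)$ as its unique $q_j$-expansion, hence $q_j\in\ub(\pi_{q_j}((c_i)))$; but because $(c_i)$ extends the quasi-greedy expansion of $x$ in base $M+1$ on the relevant prefix and $q_j$ is chosen accordingly, one identifies $\pi_{q_j}((c_i))$ with $x$ — that is, $(c_i)=\Phi_x(q_j)$, so $(c_i)\in\us(x)$.

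I expect step~(4) — arranging that $\pi_{q_j}((c_i))$ is \emph{exactly} $x$ and not merely close to $x$ — to be the main obstacle, since naively the prefix $\ep_1\ldots\ep_{N+N_j}$ only controls $\pi_{q_j}$ up to an error of order $(M+1)^{-(N+N_j)}$, whereas the tail is genuinely free. The resolution is that we should not fix $q_j$ first and then the tail; rather, for each admissible tail $(d_i)$ we let $q$ be \emph{determined} by the equation $x=\pi_q(\ep_1\ldots\ep_{N+N_j}\mathbf w\, d_1 d_2\ldots)$, show this $q$ lies in a small interval $(\beta_j, M+1)$ independent of the tail (using that the tail contributes a uniformly bounded amount scaled by $q^{-(N+N_j+|\mathbf w|)}$), and then verify the admissibility conditions of Lemma~\ref{lem:chara-uq} using $\al(q)\lge\al(\beta_j)$, where $\beta_j$ is chosen in $\overline{\ub}$ with $\al(\beta_j)$ starting with $M^{N_j}$. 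This makes $q$ a function of the tail but keeps all the needed lexicographic estimates uniform, and automatically gives $\pi_q((c_i))=x$, i.e.\ $(c_i)=\Phi_x(q)\in\us(x)$. The remaining bookkeeping — choosing $\mathbf w$, $N$, and the sequence $(N_j)$ explicitly and checking the finitely many ``boundary'' shifts near the junctions between $\ep$-prefix, $\mathbf w$, and tail — is routine given the lexicographic characterization and will be carried out in detail.
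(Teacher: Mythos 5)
Your final ``resolution'' paragraph is in fact the paper's strategy: for each admissible tail one lets $q$ be defined by $x=\pi_q(\ep_1\ldots\ep_{N+N_j}\mathbf w\,d_1d_2\ldots)$, shows via the shared prefix that $M+1-q$ is of order $(M+1)^{-(N+N_j)}$, concludes $\al(q)\succ M^{N_j+1}0^\f$, and then verifies Lemma \ref{lem:chara-uq} using the ban on $0^{N_j}$- and $M^{N_j}$-blocks. However, two of the items you defer as routine bookkeeping are exactly where the proof lives, and your sketch of them would fail. First, the word $\mathbf w$ is not primarily a lexicographic buffer: its essential job is to force $\pi_{M+1}(\ep_1\ldots\ep_{N+N_j}\mathbf w\,d_1d_2\ldots)<x$, so that the defining equation has a root $q$ strictly inside $(1,M+1)$ at all. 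Your suggested $\mathbf w=M^k$ does the opposite whenever $(\ep_i)$ does not continue with $M$'s: the constructed sequence then exceeds $(\ep_i)$ lexicographically, its value in base $M+1$ exceeds $x$, no admissible base exists, and the claimed inclusion $\us_{N_j}(x)\subset\us(x)$ breaks. The paper handles this by a case split: if $(\ep_i)$ ends with $M^\f$ it takes $\mathbf w$ empty (the long $M$-block is already part of the expansion), and otherwise it takes $\mathbf w=0M$ together with $N_j$ chosen so that $\ep_{N+N_j+1}>0$; the leading $0$ of $\mathbf w$ pins the value strictly below $x$, while the $M$ prevents a run of $0$'s from spilling across the junction.

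Second, your check ``inside the prefix'' is not valid as stated: knowing only that $\al(q_j)$ begins with a long block of $M$'s does not control a shift that starts inside a long run of $M$'s (or, after reflection, of $0$'s) occurring within $\ep_1\ldots\ep_{N+N_j}$ itself, and a buffer word placed at the end of the prefix cannot repair a comparison that is already decided before the buffer is reached. This is why $(N_j)$ cannot be an arbitrary strictly increasing sequence: it must be chosen adapted to $(\ep_i)$ (the paper's condition that among the first $N_1$ digits at least $N+1$ are positive and at least $N+1$ are less than $M$) so that the prefix $\ep_1\ldots\ep_{N+N_j}$ contains no $N_j$ consecutive $0$'s or $M$'s. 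Finally, your lower bound on $q$ ``independent of the tail'' needs the small but necessary input that some fixed position of the prefix carries a nonzero digit (the paper uses $\ep_{N-2}>0$), so that the prefix comparison converts into $M+1-q\le C(M+1)^{-N_j}$ uniformly in the tail; with these three choices made explicit, the rest of your argument matches the paper's proof.
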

\begin{proof}
The proof of this lemma is similar to  \cite[Section 4]{Lu_Tan_Wu_2014}.
Let $(\ep_i)\in\set{0,1,\ldots, M}^{\N}$ be the quasi-greedy expansion of $x$ in base $M+1$. We distinguish  two cases.

{\bf(I). $(\ep_i)$ ends with $M^\f$.} Then we can write
\begin{equation}\label{eq:june-8-10}
(\ep_i)=\ep_1\ldots\ep_m\,M^\f\quad\textrm{for some }{ m\ge 1} \textrm{ with }\ep_m< M.
\end{equation}
Let $\mathbf w=\epsilon$ be the empty word,  $N=m$ and $N_j=m+j$ for $j\ge 1$. Take a sequence $(y_i)\in\us_{N_j}(x)$. Then it can be written as
\begin{equation}\label{eq:june-8-11}
(y_i)=\ep_1\ldots\ep_{N+N_j}d_1d_2\ldots=\ep_1\ldots \ep_m M^{N_j}d_1d_2\ldots,
\end{equation}
where $(d_i)\in\set{0,1,\ldots, M}^\N$  contains neither  $N_j$ consecutive $0$'s nor $N_j$ consecutive $M$'s. Let $q_j$ be the unique root in $(1, M+1)$  of the equation
\[
x=\sum_{i=1}^\f\frac{y_i}{q_j^i}.
\]
Here we emphasize that $q_j<M+1$ because $\sum_{i=1}^\f y_i/(M+1)^i<x$. We claim that $(y_i)$ is the unique $q_j$-expansion of $x$.

Observe that the tail sequence
\[
y_{m+1}y_{m+2}\ldots=M^{N_j}d_1d_2\ldots=:(\de_i)
\]
satisfies $\si^n((\de_i))\lle(\de_i)$ for all $n\ge 0$. Then by Remark \ref{rem:chara-alpha} it follows that  $(\de_i)$ is the quasi-greedy expansion of $1$ for some base $q\in(1, M+1]$, i.e., $\al(q)=(\de_i)$. {By (\ref{eq:june-8-11})} and  Lemma \ref{lem:chara-uq} it suffices to prove that $\al(q_j)\succ\al(q)=(\de_i)$. In other words, it suffices to prove
\begin{equation}
  \label{eq:case1-1}
  \sum_{i=1}^\f\frac{\de_i}{q_j^i}<1.
\end{equation}
This follows from the following calculation: By {(\ref{eq:june-8-10}) and (\ref{eq:june-8-11})} we obtain
\[
\sum_{i=1}^m\frac{\ep_i}{(M+1)^i}+\frac{1}{(M+1)^m}=\sum_{i=1}^\f\frac{\ep_i}{(M+1)^i}=x=\sum_{i=1}^\f\frac{y_i}{q_j^i}=\sum_{i=1}^m\frac{\ep_i}{q_j^i}+\frac{1}{q_j^m}\sum_{i=1}^\f\frac{\de_i}{q_j^i},
\]
which gives
\[
\sum_{i=1}^\f\frac{\de_i}{q_j^i}=q_j^m\left(\frac{1}{(M+1)^m}+\sum_{i=1}^m\Big(\frac{\ep_i}{(M+1)^i}-\frac{\ep_i}{q_j^i}\Big)\right)\le\frac{q_j^m}{(M+1)^m}<1,
\]
where the inequalities follow  by   $q_j<M+1$. This proves (\ref{eq:case1-1}).

Therefore, by Lemma \ref{lem:chara-uq} it follows that $(y_i)$ is the unique $q_j$-expansion of $x$, i.e.,  $(y_i)\in\us(x)$. Hence, $\us_{N_j}(x)\subset\us(x)$.

{\bf(II).  $(\ep_i)$ does not end with $M^\f$.}
{ Since $(\ep_i)$ is the quasi-greedy expansion of $x$ in base $M+1$,   $(\ep_i)$ does not end with $0^\f$.
Then there exists an integer $N \ge 3$ such that $\ep_{N-2}>0$. Choose $N_1>N$ such that $\ep_{N+N_1+1}>0$ and
\begin{equation}\label{eq:june-8-5}
  |\{1 \le i \le N_1: \ep_i >0 \}|  \ge N+1,\qquad |\{1 \le i \le N_1: \ep_i < M \}|  \ge N+1.
\end{equation}
In fact, we can choose a strictly increasing sequence $(N_j)$ such that $\ep_{N+N_j+1}>0$ for any $j\ge 1$. Set $\mathbf w=0M$.
Fix $j\ge 1$, and take a sequence
\begin{equation}\label{eq:june-8-2}
(y_i)=\ep_1\ldots\ep_{N+N_j}0M d_1d_2\ldots \in\us_{N_j}(x),
\end{equation}
where the tail sequence $(d_i)$ contains neither $N_j$ consecutive $0$'s nor  $N_j$ consecutive $M$'s.
It follows from (\ref{eq:june-8-5}) that the initial word $\ep_1\ldots\ep_{N+N_j}$ contains neither $N_j$ consecutive $0$'s nor $N_j$ consecutive $M$'s.} Hence, by (\ref{eq:june-8-2}) it gives that $(y_i)$ contains neither $(N_j+1)$ consecutive $0$'s nor $(N_j+1)$ consecutive $M$'s.
Note that the equation
\[
x=\pi_{q_j}((y_i))=\sum_{i=1}^\f\frac{y_i}{q_j^i}
\]
determines a unique ${q_j}\in(1,M+1)$.
Here we emphasize that $q_j<M+1$, since {$\ep_{N+N_j+1}>0=y_{N+N_j+1}$ which implies that $\sum_{i=1}^\f y_i/(M+1)^i<x$}.
Then by Lemma \ref{lem:chara-uq}, to show that $(y_i)$ is the unique $q_j$-expansion of $x$ it suffices to show that $\al(q_j)\succ M^{N_j+1}0^\f$, or equivalently, to prove
\begin{equation}\label{eq:case2-1}
\sum_{i=1}^{N_j+1}\frac{M}{q_j^i}<1.
\end{equation}

Observe {by (\ref{eq:june-8-2})} that
\[
\sum_{i=1}^{N+N_j}\frac{\ep_i}{q_j^i}<\sum_{i=1}^\f\frac{y_i}{q_j^i}=x=\sum_{i=1}^\f\frac{\ep_i}{(M+1)^i}<\sum_{i=1}^{N+N_j}\frac{\ep_i}{(M+1)^i}+\frac{1}{(M+1)^{N+N_j}}.
\]
This, combined  with ${ \ep_{N-2}> 0 } $ and $q_j<M+1$,  implies that
\[
\frac{M+1-q_j}{q_j(M+1)^{N-2}}\le \frac{1}{q_j^{N-2}}-\frac{1}{(M+1)^{N-2}}\le\sum_{i=1}^{N+N_j}\left(\frac{\ep_i}{q_j^i}-\frac{\ep_i}{(M+1)^i}\right)<\frac{1}{(M+1)^{N+N_j}}.
\]
Rearranging the above inequality yields
\[
M+1-q_j<\frac{q_j}{(M+1)^{N_j+2}}<\frac{M}{q_j^{N_j+1}},
\]
which gives $M(1-q_j^{-N_j-1})<q_j-1$. Thus,
\[
\sum_{i=1}^{N_j+1}\frac{M}{q_j^i}=\frac{M(1-q_j^{-N_j-1})}{q_j-1}<1,
\]
proving (\ref{eq:case2-1}).

Therefore, $(y_i)$ is the unique $q_j$-expansion of $x$, i.e., $(y_i)\in\us(x)$. Hence, $\us_{N_j}(x)\subset\us(x)$ for all $j\ge 1$, completing the proof.
\end{proof}

\begin{proof}[Proof of Theorem \ref{main:dim-devil}]
  Note by {Proposition} \ref{th:cont-uq} that the function $q\mapsto\dim_H\us_q$ is a non-decreasing Devil's staircase  on $(1, M+1]$. Then by the definition of $q_x$ it suffices to prove
  \begin{equation}
    \label{eq:june-11}
    \dim_H\us(x)=\dim_H\us_{q_x}\quad\textrm{for all }x>0.
  \end{equation}

  First we consider $x\in { (0,1)}$. Let $(\ep_i)=\Phi_x(M+1)$ be the quasi-greedy  expansion of $x$ in base $M+1$. Then by Lemma \ref{lem:subset-u-x} there exist a word $\mathbf w$,  a { positive} integer $N$ and a strictly increasing sequence $(N_j)\subset\N$ such that
  \begin{equation}\label{eq:june-20-1}
  \us_{N_j}(x)=\set{\ep_1\ldots \ep_{N+N_j}\mathbf w d_1d_2\ldots: (d_i)\in\Lambda_j}\subset\us(x)\quad\textrm{for all }j\ge 1,
  \end{equation}
  where
  \[
  \Lambda_j:=\set{(d_i)\in\set{0,1,\ldots, M}^\N: d_{n+1}\ldots d_{n+N_j}\notin\set{0^{N_j}, M^{N_j}}~\forall n\ge 0}.
  \]
  By (\ref{eq:june-20-1}) it follows that for any $j\ge 1$,
  \begin{equation}
    \label{eq:june-20-2}
    \dim_H\us(x)\ge\dim_H\us_{N_j}(x)=\dim_H\Lambda_j=\dim_H\us_{p_j}
  \end{equation}
  where $p_j\in(1, M+1]$ satifies
  \[
  1=\sum_{i=1}^{N_j}\frac{M}{p_j^i}.
  \]
  Note  that the  function $q\mapsto \dim_H\us_q$ is continuous.
  Letting $j\to\f$ in (\ref{eq:june-20-2}), and then {$N_j\to\f$ which implies} $p_j\to M+1$, by {Proposition} \ref{th:cont-uq} it follows that
  \[
  \dim_H\us(x)\ge  \dim_H\us_{M+1}=1.
  \]
Note that $q_x=M+1$ for all $x\in{ (0,1)}$.
Hence, $\dim_H\us(x)=1=\dim_H\us_{q_x}$ for all $x\in {(0, 1)}$. This  proves (\ref{eq:june-11}) for $x\in{ (0,1)}$.

Now we prove (\ref{eq:june-11}) for {$x\ge 1$}.   Then $q_x=1+M/x$, and  the quasi-greedy $q_x$-expansion of $x$ is $M^\f$. We claim that for any $N\in\N$ there exists an integer $J=J(N)>0$ such that
\begin{equation}
  \label{eq:june-20-3}
  \Gamma_{N,J}:=\set{M^J d_1d_2\ldots: (d_i)\in\us_{q_x, N}}\subset\us(x),
\end{equation}
where $\us_{q_x, N}$ consists of all sequences $(d_i)\in\set{0, 1,\ldots, M}^\N$ satisfying
\[
   \overline{\al_1(q_x) \ldots \al_N(q_x)} \prec d_{n+1}\ldots d_{n+N}\prec \al_1(q_x)\ldots \al_N(q_x)\quad \forall n\ge 0.
\]

This can be verified by the following observation. Take $N\in \N$.
{Since $\Phi_x(q_x)=M^\f$, by Lemma \ref{l21} we can choose $J$ sufficiently large such that
\begin{equation}\label{eq:june-20-4}
  \al_1(q_{N, J}) \ldots \al_N(q_{N,J}) = \al_1(q_x)\ldots \al_N(q_x),
\end{equation}
where $q_{N,J}$ is defined by the equation $\sum_{i=1}^{J} M q^{-i} =x$.
Note that each sequence $(y_i)\in\Gamma_{N,J}$ determines a unique base $p\in(1, q_x)$ via  the  equation
\[
\sum_{i=1}^\f\frac{y_i}{p^i}=x.
\]
Since $M^J0^\f\prec (y_i)\prec M^\f$, we must have $q_{N, J}< p < q_x$. Then  by Lemma \ref{l21} and $(\ref{eq:june-20-4})$ it follows that   $\al_1(p) \ldots \al_N(p) = \al_1(q_x)\ldots \al_N(q_x)$.}
So by Lemma \ref{lem:chara-uq} we conclude  that each $(y_i)\in\Gamma_{N,J}$ is the unique expansion of $x$ {in some base $p\in(q_{N, J}, q_x)$}. In other words, $\Gamma_{N,J}\subset\us(x)$, proving (\ref{eq:june-20-3}).

Therefore, by (\ref{eq:june-20-3})   it follows that
\begin{equation}\label{eq:june-8-3}
\dim_H\us(x)\ge\dim_H\Gamma_{N,J}=\dim_H\us_{q_x, N}\quad{\forall~ N\in\N}.
\end{equation}
{Recall from \cite[Theorem 3.1]{Allaart-Kong-2019} that $\lim_{N\to\f}\dim_H\us_{q_x,N}=\dim_H\us_{q_x}$.}
Letting $N\to \f$ in (\ref{eq:june-8-3}) we conclude   that
 \[
\dim_H\us(x)\ge  \dim_H\us_{q_x}.
\]
The reverse inequality is obvious since   $\us(x)\subset\us_{q_x}$ by Lemma \ref{lem:prop-Uq} (i).
This proves (\ref{eq:june-11}) for all $x\ge 1$.
\end{proof}

\subsection{Critical values of $\ub(x)$}
Observe by {Proposition \ref{prop:bi-holder}} that   the map $\Phi_x: \ub(x)\to\us(x)$ is bijective and locally bi-H\"older continuous.
So, to determine the critical values of $\ub(x)$ is equivalent to determine the critical values of $\us(x)$. We do this by using Theorem \ref{main:dim-devil} and {Proposition} \ref{th:critical-uq}.

Recall from (\ref{eq:qg}) that $q_G=q_G(M)\in(1, M+1)$ is the generalized {golden} ratio. Then    $x_G=M/(q_G-1)>1$. First we show that $\ub(x)$ is a singleton for $x\ge x_G$.
\begin{lemma}
  \label{lem:cri-1}
  If $x\ge x_G$, then $\ub(x)=\set{q_x}$.
\end{lemma}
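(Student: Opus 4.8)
The plan is to use the degenerate structure of the univoque set $\uu_q$ for small bases, as recorded in Proposition \ref{th:critical-uq}, together with the identity $q_x=\max\ub(x)$.

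First I would locate $q_x$. Since $x\ge x_G=M/(q_G-1)>1$, we get $M/x\le q_G-1$, hence $1+M/x\le q_G<M+1$, so by (\ref{eq:qx}) we have $q_x=1+M/x\le q_G$. Moreover $q_x\in\ub(x)$: indeed $M/(q_x-1)=x$, so $x$ is the right endpoint of $I_{q_x}$, whose only $q_x$-expansion is $M^\f$; this also re-proves $q_x=\max\ub(x)$.

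Now suppose, for contradiction, that $\ub(x)$ contains some base $q\ne q_x$. Because $q_x$ is the largest element of $\ub(x)$, we must have $q<q_x\le q_G$, so $q\in(1,q_G)$. Proposition \ref{th:critical-uq} then gives $\uu_q=\set{0,\,M/(q-1)}$. But $q\in\ub(x)$ means $x\in\uu_q$, and since $x>0$ this forces $x=M/(q-1)$, i.e. $q=1+M/x=q_x$, contradicting $q<q_x$. Therefore $\ub(x)=\set{q_x}$.

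There is essentially no analytic obstacle here; the only point requiring care is to verify that the hypothesis $x\ge x_G$ indeed pushes $q_x$ all the way down into the range $(1,q_G]$ on which $\uu_q$ is trivial — this is exactly the short chain of elementary inequalities above, and it explains why $x_G$ (rather than, say, $x_{KL}$) is the correct threshold in this lemma.
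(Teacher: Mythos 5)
Your proof is correct and rests on the same key fact as the paper's, namely the degeneracy $\uu_q=\{0, M/(q-1)\}$ (equivalently $\us_q=\{0^\f,M^\f\}$) for $q\le q_G$ from Proposition~\ref{th:critical-uq}, combined with the observation that $x\ge x_G$ forces $q_x\le q_G$. The paper phrases the conclusion at the symbolic level, showing directly that $\us(x)=\{M^\f\}$, whereas you argue by contradiction at the level of $\uu_q$ to rule out any $q<q_x$ in $\ub(x)$; this is essentially the same approach.
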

\begin{proof}
  Note by {Proposition} \ref{th:critical-uq} (i) that for $q\le q_G$ the symbolic univoque set $\us_q=\set{0^\f, M^\f}$. {Since for $x\ge x_G$ we have by (\ref{eq:qx}) that $q_x\le q_G$, so}
  \[
  \us(x)\subseteq\set{0^\f, M^\f}\quad\forall ~x\ge x_G.
  \]
  If $0^\f\in\us(x)$, then $x={\pi_q (0^\f) }=0$, leading to a contradiction with our assumption that $x\ge x_G>0$. So $\us(x)=\set{M^\f}$, which implies $\ub(x)=\set{q_x}$ for all $x\ge x_G$.
\end{proof}

In the following lemma we show that $x_G$ is indeed a critical value for $\ub(x)$. Recall that $q_{KL}\in(q_G, M+1)$ is the Komornik-Loreti constant. Then $x_{KL}=M/(q_{KL}-1)\in(1, x_G)$.
\begin{lemma}\label{lem:cri-2}
For any $x< x_G$ the set  $\ub(x)$ contains infinitely many elements. In particular, for $x\in[x_{KL}, x_G)$ we have $|\ub(x)|=\aleph_0$.
\end{lemma}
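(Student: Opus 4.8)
The plan is to transport the question to the symbolic side via the bijection $\Phi_x\colon\ub(x)\to\us(x)$ and to establish: (a) $\us(x)$ is infinite for every $x<x_G$; (b) when in addition $x\ge x_{KL}$, $\us(x)$ is at most countable. For (a) I would dispose of $x\in(0,1)$ immediately — there Theorem~\ref{main:dim-devil}(iii) gives $\dim_H\us(x)=1$, so $\us(x)$ (and hence $\ub(x)$) is even uncountable — and then concentrate on $x\in[1,x_G)$, where I construct infinitely many bases by hand.

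For $x\in[1,x_G)$, let $\mathbf g:=\al(q_G)$ be the quasi-greedy expansion of $1$ in base $q_G$, so $\pi_{q_G}(\mathbf g)=1$; from \eqref{eq:qg} one checks that $\mathbf g=(M/2)^\f$ for $M$ even and $\mathbf g=\bigl(\tfrac{M+1}{2}\,\tfrac{M-1}{2}\bigr)^\f$ for $M$ odd (for $M=1$, $\mathbf g=(10)^\f$). For $n\ge1$ let $q_n$ be the unique solution in $(1,\infty)$ of $\pi_{q}(M^n\mathbf g)=x$; this is well defined since $q\mapsto\pi_q(M^n\mathbf g)$ decreases continuously from $+\infty$ to $0$. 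I would then verify two claims for all large $n$. First, $q_n\in(q_G,M+1)$: indeed $\pi_{q_G}(M^n\mathbf g)=x_G-q_G^{-n}(x_G-1)$ increases to $x_G>x$ while $\pi_{M+1}(M^n\mathbf g)=1-(M+1)^{-n}\bigl(1-\pi_{M+1}(\mathbf g)\bigr)<1\le x$, so monotonicity in $q$ squeezes $q_n$ strictly between $q_G$ and $M+1$. Second, $M^n\mathbf g\in\us_{q_n}$: since $q_n>q_G$, Remark~\ref{rem:chara-alpha} gives $\al(q_n)\succ\al(q_G)=\mathbf g$, and every sequence $\si^k(\mathbf g)$ or $\overline{\si^k(\mathbf g)}$ lies in the finite set $\{\mathbf g,\overline{\mathbf g}\}$ with $\overline{\mathbf g}\lle\mathbf g$, hence is $\prec\al(q_n)$; routine checking of the two lexicographic inequalities of Lemma~\ref{lem:chara-uq} — the positions inside the block $M^n$ only produce reflected tails beginning with $0$ or equal to $\overline{\mathbf g}$, and the positions inside $\mathbf g$ produce some $\si^k(\mathbf g)$ or $\overline{\si^k(\mathbf g)}$ — then yields $M^n\mathbf g\in\us_{q_n}$. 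Consequently $q_n\in\ub(x)$ with $\Phi_x(q_n)=M^n\mathbf g$, and since the $M^n\mathbf g$ are pairwise distinct, $\ub(x)$ is infinite.

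For (b), assume $x\in[x_{KL},x_G)$, so $q_x=1+M/x\in(q_G,q_{KL}]$ and $\ub(x)\subseteq(1,q_x]$. Write
\[
\ub(x)=\{q_x\}\cup\bigcup_{m}\Bigl(\ub(x)\cap\bigl(1,\,q_x-\tfrac1m\bigr]\Bigr),
\]
the union over all $m$ with $q_x-\tfrac1m>1$. For any such $m$ and any $q\in\ub(x)\cap(1,q_x-\tfrac1m]$ we have $\Phi_x(q)\in\us_q\subseteq\us_{q_x-1/m}$ by Lemma~\ref{lem:prop-Uq}(i), and $\us_{q_x-1/m}$ is at most countable because $q_x-\tfrac1m<q_{KL}$: by Proposition~\ref{th:critical-uq}, $\uu_q$ is finite for $q\le q_G$ and countably infinite for $q\in(q_G,q_{KL})$, and $|\us_q|=|\uu_q|$ via $\pi_q$. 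Since $\Phi_x$ is injective, each piece $\ub(x)\cap(1,q_x-\tfrac1m]$ is at most countable, hence so is $\ub(x)$; combined with the infiniteness from (a) this gives $|\ub(x)|=\aleph_0$.

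The step I expect to cause the most trouble is the verification that $M^n\al(q_G)$ is a \emph{unique} $q_n$-expansion of $x$: this is precisely where the value of the generalized golden ratio $q_G$ and the explicit shape of $\al(q_G)$ enter — below $q_G$ the sequence $\al(q_G)$ violates the lexicographic constraints of Lemma~\ref{lem:chara-uq} and the construction collapses — and it is also the reason the direct construction is confined to $x\ge1$ (so that $q_n\le M+1$), the band $(0,1)$ being handled separately by Theorem~\ref{main:dim-devil}.
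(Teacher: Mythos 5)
Your proposal is correct and takes essentially the same route as the paper: the paper likewise handles small $x$ by the dimension formula of Theorem~\ref{main:dim-devil}, constructs the very same sequences $M^k\al(q_G)$ to produce infinitely many bases $p_k\in(q_G,q_x)\subset\ub(x)$, and bounds the cardinality for $x\in[x_{KL},x_G)$ by embedding $\us(x)$ in the countable sets $\us_q$ with $q<q_{KL}$ (the paper splits off the endpoint $x=x_{KL}$ with a decomposition essentially identical to your uniform one). The only differences are bookkeeping — you verify $q_n\in(q_G,M+1)$ and the lexicographic conditions for $M^n\al(q_G)$ in more explicit detail, and you place $x=1$ in the constructive case rather than the dimensional one.
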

\begin{proof}
  Let $x<x_G$. Then $q_x>q_G$.
  {Note by Theorem \ref{main:dim-devil} that $\dim_H \us(x)=\dim_H \us_{M+1}=1$ for $x \in (0,1]$.}
  So it suffices to prove that $\us(x)$ contains infinitely many elements for $x\in(1,x_G)$. Take $x\in(1, x_G)$. Then by (\ref{eq:qx}) it follows that  $q_x{=1+M/x}\in(q_G, M+1)$ and the quasi-greedy expansion  $\Phi_x(q_x)=M^\f$. By Lemma \ref{l21} (i) { it follows that for  $k\in\N$ sufficiently large  the equation
  \begin{equation}\label{eq:june-14}
  { \pi_{p_k}(M^k\al(q_G)) }=x
  \end{equation}
  determines a unique base $p_k\in(q_G, q_x)$, and   $p_k\nearrow q_x$ as $k\to\f$. So, there exists $K=K(x)\in\N$ such that $p_k\in(q_G, q_x)$ for any $k\ge K$. Take $k\ge K$. Then $\al(p_k)\succ\al(q_G)$. By Lemma \ref{lem:chara-uq} it follows that $M^k\al(q_G)\in\us_{p_k}$. Therefore, by (\ref{eq:june-14}) we conclude that}
    \[
  M^k\al(q_G)\in\us(x)\quad\forall k\ge K.
  \]
  This implies that $\ub(x)$ is an infinite set for any $x<x_G$.

  Observe by Lemma \ref{lem:prop-Uq} (i) that $\us(x)\subseteq\us_{q_x}$ for any $x>0$. Furthermore,  $q_x\in(q_{G}, q_{KL})$ if and only if $x\in(x_{KL}, x_G)$. By using {Proposition} \ref{th:critical-uq}  (ii) it follows that $\us(x)$ is at most countable for any $x\in(x_{KL}, x_G)$.  If $x=x_{KL}$, then $q_x=q_{KL}$ and $\Phi_x(q_x)=M^\f$. Observe that
  \begin{align*}
    \us(x)&=\set{M^\f}\cup\set{\Phi_x(p): p\in\ub(x)\cap(1, q_{KL})}\\
    &=\set{M^\f}\cup\bigcup_{n=1}^\f\set{\Phi_x(p): p\in\ub(x)\cap(1, q_{KL}-\frac{1}{2^n})}\\
    &\subseteq\set{M^\f}\cup\bigcup_{n=1}^\f\us_{q_{KL}-\frac{1}{2^n}}.
  \end{align*}
  Then  by {Proposition} \ref{th:critical-uq} (ii) we can  deduce  from the above equation  that $\us(x)$ is also a  countable {set} for $x=x_{KL}$. Therefore, $|\ub(x)|=\aleph_0$ for any $x\in[x_{KL}, x_G)$.
\end{proof}

In the next lemma we demonstrate  that $x_{KL}$ is also a critical value of $\ub(x)$.
\begin{lemma}
  \label{lem:cri-3}\mbox{}

  \begin{itemize}
  \item[{\rm(i)}] If  $x\in(0, 1]$, then $\dim_H\ub(x)=1$;
  \item[{\rm(ii)}] If $x\in(1, x_{KL})$, then
  \[
  0<\dim_H\uu_{q_x}\le\dim_H\ub(x)\le\max_{q\in\overline{\ub(x)}}\dim_H\uu_q<1.
  \]
  \end{itemize}
\end{lemma}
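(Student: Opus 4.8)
The plan is to transfer everything through the bi-Hölder map $\Phi_x\colon\ub(x)\to\us(x)$ of Proposition \ref{prop:bi-holder} and to sandwich $\us(x)$ between a horizontal slice and a countable union of horizontal slices. For part (i), assume first $x\in(0,1)$, so $q_x=M+1$. By Theorem \ref{main:dim-devil} we already know $\dim_H\us(x)=\dim_H\us_{M+1}=1$. Applying Proposition \ref{prop:31} on a subinterval $(a,b)\Subset(1,M+1)$ gives
\[
\dim_H\big(\ub(x)\cap(a,b)\big)\ge\frac{\dim_H\Phi_x\big(\ub(x)\cap(a,b)\big)}{\log b},
\]
so it suffices to show that $\dim_H\Phi_x(\ub(x)\cap(a,b))$ can be pushed to $\log b$ by letting $b\uparrow M+1$. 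This follows from Lemma \ref{lem:subset-u-x}: the sets $\us_{N_j}(x)$ there are subsets of $\us(x)$ of Hausdorff dimension tending to $1$, and each is realized by bases $p_j$ bounded away from $M+1$ on the cylinder $[\ep_1\ldots\ep_{N+N_j}\mathbf w]$ — more precisely one checks, as in the proof of Theorem \ref{main:dim-devil}, that these bases stay inside a fixed subinterval $(a,b)$ once $b$ is chosen appropriately, since the prefix $\ep_1\ldots\ep_{N+N_j}\mathbf w$ forces the base to be at most some $b<M+1$. Hence $\dim_H(\ub(x)\cap(a,b))\to 1$ and $\dim_H\ub(x)=1$. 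The case $x=1$ is identical, now with $q_x=2$ (when $M=1$) or $q_x=1+M$; we simply note $\dim_H\us(1)=\dim_H\us_{q_1}=1$ by Theorem \ref{main:dim-devil} and run the same localization argument with $b\uparrow q_1$.

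For part (ii), fix $x\in(1,x_{KL})$, so $q_x=1+M/x\in(q_G,q_{KL})$. The lower bound: by Theorem \ref{main:dim-devil}, $\dim_H\us(x)=\dim_H\us_{q_x}$, and by Proposition \ref{th:critical-uq} we have $\dim_H\uu_{q_x}=\dim_H\us_{q_x}/\log q_x>0$ precisely because $q_x>q_{KL}$ is false — wait, here $q_x<q_{KL}$, so actually $\dim_H\us_{q_x}=0$ by Proposition \ref{th:cont-uq}. This is the subtlety: the honest lower bound must come not from $\us_{q_x}$ but from the fact that $\ub(x)$ reaches well below $q_x$. Concretely, by Lemma \ref{lem:cri-2} the set $\ub(x)$ is infinite, but more is needed. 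I would instead argue: since $x\in(1,x_{KL})$, there exists $y\in(1,x)$ with $q_y\in(q_{KL},M+1)$, i.e. $y<x_{KL}$ fails — so this does not work either. The correct route is to observe that $\ub(x)$ contains bases $p$ arbitrarily close to $q_x$, and near such $p$ the local structure of $\uu_p$ is nontrivial; but since $q_x<q_{KL}$, $\uu_{q_x}$ is only countable. So in fact the claim $0<\dim_H\uu_{q_x}$ is only consistent if $q_x\ge q_{KL}$, i.e. $x\le x_{KL}$ — hence the statement must implicitly use $x\le x_{KL}$ or the interval endpoint, and I would re-read: $x\in(1,x_{KL})$ gives $q_x\in(q_{KL},M+1)$ since $x\mapsto 1+M/x$ is decreasing and $x<x_{KL}\Rightarrow q_x>q_{x_{KL}}=q_{KL}$. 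Good — so indeed $q_x>q_{KL}$, and Proposition \ref{th:critical-uq} gives $\dim_H\uu_{q_x}\in(0,1)$. The lower bound $\dim_H\uu_{q_x}\le\dim_H\ub(x)$ then follows from Theorem \ref{main:local-dim} applied at a point $q\in(1,q_x]\setminus\overline{\ub}$ where $\uu_q$ has full local dimension equal to $\dim_H\uu_q$, combined with the global-to-local passage; alternatively, one uses that $\Phi_x$ maps a positive-dimensional subset of $\uu_{q_x}$-type structure into $\us(x)$ via the construction in Theorem \ref{main:dim-devil} and inverts through Proposition \ref{prop:31}.

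For the upper bound in (ii): cover $\ub(x)$ by countably many pieces $\ub(x)\cap(1,q_x-1/n)$ together with $\{q_x\}$. On each piece, $\Phi_x(\ub(x)\cap(a,b))\subseteq\us_b$ by Lemma \ref{lem:prop-Uq}(i) (monotonicity of $q\mapsto\us_q$), so by Proposition \ref{prop:31},
\[
\dim_H\big(\ub(x)\cap(a,b)\big)\le\frac{\dim_H\us_b}{\log a}=\frac{\log b}{\log a}\dim_H\uu_b\le\frac{\log b}{\log a}\max_{q\in\overline{\ub(x)}}\dim_H\uu_q.
\]
Taking $b=q_x-1/n$, $a$ slightly below it, and letting $n\to\infty$ (so $b\to q_x$ and $a/b\to 1$), countable stability of Hausdorff dimension yields $\dim_H\ub(x)\le\max_{q\in\overline{\ub(x)}}\dim_H\uu_q$. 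Finally $\max_{q\in\overline{\ub(x)}}\dim_H\uu_q<1$: the compact set $\overline{\ub(x)}$ is contained in $(1,q_x]$ with $q_x<M+1$, and by Proposition \ref{th:critical-uq} the continuous function $q\mapsto\dim_H\uu_q$ equals $1$ only at $q=M+1$, so its maximum over the compact set $\overline{\ub(x)}\subset[q_x',q_x]$ with $q_x<M+1$ is strictly less than $1$. The main obstacle is the lower bound: one must exhibit enough of $\ub(x)$ near the top base $q_x$ to recover $\dim_H\uu_{q_x}$, which requires either invoking Theorem \ref{main:local-dim} at a well-chosen non-univoque base $q<q_x$ where the local dimension of $\uu_q$ already attains the global value, or — cleaner — running the prefix construction of Theorem \ref{main:dim-devil} (the sets $\Gamma_{N,J}\subset\us(x)$ with $\dim_H\Gamma_{N,J}=\dim_H\us_{q_x,N}\to\dim_H\us_{q_x}$) and then dividing by $\log q_x$ via Proposition \ref{prop:31} to land at $\dim_H\uu_{q_x}$; I would take the latter path since $\Gamma_{N,J}$ is supported on bases in a fixed compact subinterval of $(q_{N,J},q_x)$, making the bi-Hölder exponents controllable.
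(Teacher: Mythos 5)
Your arguments for part (i) and for the lower bound in (ii) are essentially sound but considerably more roundabout than the paper's: you re-run the prefix constructions ($\us_{N_j}(x)$ and $\Gamma_{N,J}$) from inside the proof of Theorem \ref{main:dim-devil}, whereas the paper simply invokes that theorem as a black box. Concretely, the paper proves the single general inequality $\dim_H\ub(x)\ge\dim_H\uu_{q_x}$ for all $x>0$ in one chain: write $\ub(x)$ up to a point as $\bigcup_n\ub(x)\cap(1+n^{-1},q_x-n^{-1})$, apply countable stability, apply Proposition \ref{prop:31} to each annulus with the \emph{lower}-bound side and the observation $\frac{1}{\log(q_x-n^{-1})}\ge\frac{1}{\log q_x}$, recombine to get $\frac{\dim_H\us(x)}{\log q_x}$, and then substitute $\dim_H\us(x)=\dim_H\us_{q_x}$ from Theorem \ref{main:dim-devil} and $\dim_H\uu_q=\dim_H\us_q/\log q$ from Proposition \ref{th:critical-uq}. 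Both (i) and the left-hand inequalities of (ii) are immediate specializations. Your route works, but it is duplicative and, in the $x\in(0,1)$ case, your claim that the bases realizing $\us_{N_j}(x)$ lie in a fixed $(a,b)$ is something you would still need to extract carefully from the proof of Lemma \ref{lem:subset-u-x}.

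The genuine gap is in your upper bound for part (ii). You propose to cover $\ub(x)$ by pieces $\ub(x)\cap(a,b)$ with $b=q_x-1/n$ and ``$a$ slightly below $b$'', and then invoke countable stability. But a sequence of thin intervals hugging $q_x$ from below does \emph{not} cover $\ub(x)$: the set $\ub(x)$ can have mass at bases much smaller than $q_x$ (indeed it does, since $\ub(x)\subset(q_G,q_x]$ for $x<x_G$), and those bases are never captured by your $(a_n,b_n)$. If instead you take $a$ close to $1$, the exponent $1/\log a$ in Proposition \ref{prop:31} blows up and the bound becomes vacuous. This is exactly why the paper does something structurally different: it covers the \emph{compact} set $\overline{\ub(x)}$ by small balls $(q-\de_q,q+\de_q)$ centered at every $q\in\overline{\ub(x)}$, where $\de_q$ is chosen so small (using the continuity of $q\mapsto\dim_H\uu_q$ and the monotonicity of $q\mapsto\us_q$) that the local estimate from Proposition \ref{prop:31} satisfies $\dim_H(\ub(x)\cap(q-\de_q,q+\de_q))\le\dim_H\uu_q+\ep\le\xi+\ep$, and then extracts a \emph{finite} subcover by compactness. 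The localization around an arbitrary $q\in\overline{\ub(x)}$ — not just near $q_x$ — is what makes the H\"older exponent $1/\log a\approx 1/\log q$ behave well and match $\dim_H\uu_q$; this cannot be replaced by a naive countable-stability argument near the top base. (A secondary issue: your step $\dim_H\uu_b\le\max_{q\in\overline{\ub(x)}}\dim_H\uu_q$ for $b\notin\overline{\ub(x)}$ implicitly assumes monotonicity of $q\mapsto\dim_H\uu_q$, which is not stated in Proposition \ref{th:critical-uq}; the paper's covering argument sidesteps this by only evaluating $\dim_H\uu_q$ at points $q$ of $\overline{\ub(x)}$.) I would also encourage you to clean up the written proposal: the visible false start on whether $q_x$ lies above or below $q_{KL}$ should be resolved before, not during, the write-up.
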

\begin{proof}
  (i) was first proven by L\"u, Tan and Wu \cite{Lu_Tan_Wu_2014} for $M=1$. For $M>1$ the proof was given by Xu \cite{Xu-Jiayi-2019} in his thesis.
  {For completeness we prove this by using Theorem \ref{main:dim-devil} and Proposition \ref{prop:31}.

  For $x >0$ note that $\ub(x)\subset(1, q_x]$. Then  by using the countable stability of Hausdorff dimension (cf.~\cite{Falconer_1990}) and Proposition \ref{prop:31} it follows that
  \begin{align*}
    \dim_H \ub(x)
    & = \dim_H \Big(\bigcup_n \ub(x)\cap (1+n^{-1}, q_x- n^{-1})\Big) \\
    & = \sup_n \dim_H \left(\ub(x)\cap (1+n^{-1}, q_x- n^{-1})\right) \\
    & \ge \sup_n \frac{1}{\log (q_x-n^{-1})}\dim_H \Phi_x\left(\ub(x)\cap (1+n^{-1}, q_x- n^{-1})\right) \\
    & \ge \frac{1}{\log q_x} \sup_n \dim_H \Phi_x\left(\ub(x)\cap (1+n^{-1}, q_x- n^{-1})\right) \\
    & = \frac{1}{\log q_x} \dim_H \Big(\bigcup_n \Phi_x\big(\ub(x)\cap (1+n^{-1}, q_x- n^{-1})\big)\Big) \\
    & = \frac{\dim_H \us(x)}{\log q_x}= \frac{\dim_H \us_{q_x}}{\log q_x},
  \end{align*}
  where the last equality follows by Theorem \ref{main:dim-devil}.
  Therefore, by Proposition \ref{th:critical-uq} we obtain that
  \begin{equation}\label{ineq-dim}
    \dim_H \ub(x) \ge \dim_H \uu_{q_x}\quad \forall~ x >0.
  \end{equation}
  Note by (\ref{eq:qx}) that $q_x=M+1$ for all $x\in(0,1]$. Then by (\ref{ineq-dim}) and Proposition \ref{th:critical-uq}  we conclude that $\dim_H\ub(x)=\dim_H\uu_{M+1}=1$ for all $x\in(0, 1]$.

  Now we prove (ii). Let  $x\in(1, x_{KL})$.  Then $q_x\in(q_{KL}, M+1)$.
  The first two inequalities of (ii) follows from (\ref{ineq-dim}).}
For the remaining inequalities in (ii)  we set
   \[
   \xi:=\max_{q\in\overline{\ub(x)}}\dim_H\uu_q.
   \]
   Since $\overline{\ub(x)}\subset(1, q_x]$ and $q_x\in\ub(x)\cap(q_{KL}, M+1)$, by {Proposition} \ref{th:cont-uq} it follows that $0<\xi<1$. {Take   $\ep>0$.}   By {Propositions \ref{prop:31}, Lemma \ref{lem:prop-Uq} (i) and   Proposition \ref{th:cont-uq}}  it follows that for each $q\in\overline{\ub(x)}$ there exists  $\de>0$ {such that}
  {\begin{equation}\label{eq:cri-1}
  \begin{split}
  \dim_H(\ub(x)\cap(q-\de, q+\de)) &\le \frac{\dim_H\Phi_x(\ub(x)\cap(q-\de, q+\de))}{\log(q+\de)}\\
  &\le\frac{\dim_H\us_{q+\de}}{\log (q+\delta)}=\dim_H \uu_{q+\delta}\le  \dim_H\uu_q+\ep \le  \xi+\ep.
  \end{split}
  \end{equation}}
  For each $q\in\overline{\ub(x)}$ we choose a    $\de_q\in(0, M+1-q_x)$ satisfying (\ref{eq:cri-1}). Then the collection $\set{(q-\de_q, q+\de_q): q\in\overline{\ub(x)}}$ forms an open cover  of $\overline{\ub(x)}$. Since $\overline{\ub(x)}$ is compact,   there exists a finite cover $\set{(q_i-\de_i, q_i+\de_i)}_{i=1}^N$ of $\overline{\ub(x)}$, where $\de_i:=\de_{q_i}$. By (\ref{eq:cri-1}) this implies
  \begin{align*}
  {\dim_H}\ub(x)&=\dim_H\left(\ub(x)\cap\bigcup_{i=1}^N(q_i-\de_i, q_i+\de_i)\right)\\
  &=\max_{1\le i\le N}\dim_H(\ub(x)\cap(q_i-\de_i, q_i+\de_i))\\
  &\le{  \xi+\ep}.
  \end{align*}
  Since $\ep>0$ was arbitrary, this
  completes  the proof.
\end{proof}

\begin{proof}[Proof of Theorem \ref{main:critical-point}]
By Lemmas \ref{lem:cri-1}--\ref{lem:cri-3} it suffices to prove that $\ub(x)$ has zero Lebesgue measure. This result was first proven in \cite{Lu_Tan_Wu_2014} for $M=1$ by using the Lebesgue density theorem. Here we present an alternate proof by using Proposition \ref{prop:31}.  By the same argument as in the proof of Lemma \ref{lem:cri-3} (ii) one can easily verify that for $x>0$,
\[
\dim_H\left(\ub(x)\cap(1, M+1-\frac{1}{2^n})\right)<1 \quad\textrm{for any }n\ge 1.
\]
This implies that $\ub(x)\cap(1, M+1-\frac{1}{2^n})$ has zero Lebesgue measure for all $n\ge 1$. Then we conclude that   $\ub(x)$ is a Lebesgue null set by observing
\[
\ub(x)\subseteq\set{M+1}\cup\bigcup_{n=1}^\f\left(\ub(x)\cap(1, M+1-\frac{1}{2^n})\right).
\]
\end{proof}

\section{Isolated points of $\ub(x)$}\label{sec:isolated points}
In this section we will consider the topological structure of $\ub(x)$ when $x$ varies in $(0, \f)$. In particular, we will investigate the isolated points of $\ub(x)$, and prove Theorem \ref{main:iso-points}.
Recall from (\ref{eq:11}) that $(1, M+1]\setminus\overline{\ub}=\bigcup(q_0, q_0^*)$, {and recall $\vb$ from (\ref{eq:v}). Then}  for each connected component $(q_0, q_0^*)$ {of $(1,M+1]\setminus\overline{\ub}$} we can write the elements of $\vb\cap(q_0, q_0^*)=\set{q_n}_{n=1}^\f$ in an increasing order as
\[
q_0<q_1<q_2<\cdots<q_n<q_{n+1}<\cdots,\quad\textrm{and}\quad q_n\nearrow q_0^*\textrm{ as }n\to\f.
\]
{By} Lemma \ref{lem:prop-Uq} (ii) it follows that $\us_p=\us_{q_{n+1}}$ for any $p\in(q_n, q_{n+1}]$.
For $n\ge 1$ set
\[
\us_{q_{n+1}}^*:=\us_{q_{n+1}}\setminus\us_{q_{n}}=\set{(d_i)\in\us_{q_{n+1}}: (d_i)\textrm{ ends with }\al(q_n)\textrm{ or }\overline{\al(q_n)}}.
\]
It was shown in \cite{DeVries_Komornik_2008} that $\us_{q_{n+1}}^*$ is dense in $\us_{q_{n+1}}$ for any $n\ge 1$.

First we give a sufficient condition for the set $\ub(x)$ {to include} isolated points.
\begin{proposition}\label{prop:isolate-point}
Let $(q_0, q_0^*)$ be a connected component of $(1, M+1]\setminus\overline{\ub}$, and let $\set{q_n}_{n=1}^\f=\vb\cap(q_0, q_0^*)$. Then for any
\[
x\in\bigcup_{n=1}^\f\bigcup_{p\in(q_n, q_{n+1})}\pi_p(\us_{q_{n+1}}^*)
\]
the set $\ub(x)$ contains {at least one isolated point}.
\end{proposition}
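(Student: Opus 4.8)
The plan is to fix $n\ge 1$, a base $p\in(q_n,q_{n+1})$, and a sequence $(d_i)\in\us_{q_{n+1}}^*$, and to show that $x:=\pi_p((d_i))$ has the property that $\ub(x)$ has an isolated point; the natural candidate for the isolated point is $p$ itself. Since $(d_i)\in\us_{q_{n+1}}=\us_p$ (using Lemma~\ref{lem:prop-Uq}(ii), as $p\in(q_n,q_{n+1}]$), the sequence $(d_i)$ is the unique $p$-expansion of $x$, so indeed $p\in\ub(x)$ with $\Phi_x(p)=(d_i)$. The task is to produce a $\delta>0$ with $\ub(x)\cap(p-\delta,p+\delta)=\{p\}$.

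First I would handle the right side. By definition of $(d_i)\in\us_{q_{n+1}}^*=\us_{q_{n+1}}\setminus\us_{q_n}$, the sequence $(d_i)$ ends with $\al(q_n)$ or with $\overline{\al(q_n)}$; in particular $(d_i)\notin\us_{q_n}$, and since $\us_r=\us_{q_n}$ is \emph{constant} for $r$ in a left-neighborhood of $q_n$ while $\us$ is non-decreasing, the key point is that for bases $r$ slightly larger than $p$ the relevant lexicographic inequality in Lemma~\ref{lem:chara-uq} is violated. More precisely: because $(d_i)$ terminates in the periodic block $\al(q_n)^\f$ (or its reflection) only up to finitely many digits but is eventually periodic-\emph{tail}, one of the two conditions $d_{m+1}d_{m+2}\ldots\prec\al(r)$ or $\overline{d_{m+1}d_{m+2}\ldots}\prec\al(r)$ fails for $r$ in a punctured right-neighborhood of $p$, because $\al(r)$ moves strictly with $r$ (Lemma~\ref{l21}(i)/Remark~\ref{rem:chara-alpha}) while the tail of $\Phi_x(r)$ moves continuously. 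Concretely, I would use that $q\mapsto\Psi_x(q)$ is right-continuous and strictly increasing (Lemma~\ref{l21}(ii)): for $r>p$ close to $p$ the greedy expansion $\Psi_x(r)$ agrees with $(d_i)$ on a long prefix, hence still "ends up looking like" the forbidden tail $\al(q_n)$ (resp. $\overline{\al(q_n)}$) on that prefix, but $\al(r)\succeq\al(q_n)$ wait — the right direction is: for $r\in(p,q_{n+1})$ we have $\us_r=\us_{q_{n+1}}\ni(d_i)$, so $(d_i)$ still satisfies the univoque condition at base $r$, yet it is \emph{no longer} the $r$-expansion of $x$; rather the $r$-expansion of $x$ is some $\Phi_x(r)\ne(d_i)$, and I must show this $\Phi_x(r)$ is \emph{not} univoque at base $r$. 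This is where the membership in $\us_{q_{n+1}}^*$ rather than merely $\us_{q_{n+1}}$ is essential: because $(d_i)$ sits on the "boundary" (it ends with $\al(q_n)$), perturbing the base forces $\Phi_x(r)$ to violate a strict inequality in Lemma~\ref{lem:chara-uq}. I would make this precise by a direct estimate comparing $\sum d_i r^{-i}$ with $x=\sum d_i p^{-i}$ and using the algebraic relation for $\al(q_n)$.

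For the left side the argument is symmetric but uses left-continuity of $q\mapsto\Phi_x(q)$: for $r<p$ close to $p$ one has $\us_r=\us_{q_{n+1}}$ still (since $r\in(q_n,q_{n+1})$ as well, once $\delta$ is small), and again the $r$-expansion $\Phi_x(r)$ of $x$ is forced off the univoque set by the same boundary phenomenon. I would choose $\delta$ small enough that $(p-\delta,p+\delta)\subset(q_n,q_{n+1})$, and then conclude that no $r\in(p-\delta,p+\delta)\setminus\{p\}$ lies in $\ub(x)$. The main obstacle I anticipate is the bookkeeping in the lexicographic argument: one must carefully control how long a prefix of $\Phi_x(r)$ (or $\Psi_x(r)$) agrees with $(d_i)$ as $r\to p$, and then verify that on that prefix the tail configuration $\al(q_n)$ (or $\overline{\al(q_n)}$) produces a \emph{strict} violation of the Erd\H os--Joó--Komornik inequalities for every $r\ne p$ nearby — i.e. that the "bad" behavior is robust and not just marginal. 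The clean way to do this is probably to recall (from \cite{DeVries_Komornik_2008}) the explicit structure of $\us_{q_{n+1}}^*$: its elements are exactly of the form $\mathbf a\,\al(q_n)^\f$ type tails glued after an admissible word, and for such sequences the quantity $1-\sum_i \al_i(q_n)r^{-i}$ changes sign as $r$ crosses $q_n$, which transfers — via the $p$-vs-$r$ comparison — into the desired strict inequality. Once that robustness is in hand, isolation of $p$ in $\ub(x)$ is immediate.
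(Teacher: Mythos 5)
Your overall plan is aligned with the paper's: take $x=\pi_p((d_i))$ with $(d_i)\in\us_{q_{n+1}}^*$ and $p\in(q_n,q_{n+1})$, take $p$ itself as the candidate isolated point, use the left-continuity of $\Phi_x$ and right-continuity of $\Psi_x$ (which agree on $\ub(x)$) to get long prefix agreement for nearby bases in $\ub(x)$, and then argue that the $\al(q_n)$-tail of $(d_i)$ rigidly determines the rest so that no nearby $p'\in\ub(x)$ can differ from $p$. Up to that point you match the paper almost step for step, and your handling of the one-sided continuity is if anything slightly more explicit than the paper's.

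The gap is in the rigidity step, which you acknowledge is "the main obstacle" and then wave at with the heuristic that $1-\sum_i\al_i(q_n)r^{-i}$ changes sign as $r$ crosses $q_n$. That analytic sign change is not what makes the argument work, and it is not clear how it would "transfer into the desired strict inequality." What the paper actually uses is a purely combinatorial fact: writing $\al(q_n)=(a_1\ldots a_m\overline{a_1\ldots a_m})^\f$, the next quasi-greedy expansion in the chain is $\al(q_{n+1})=\bigl(a_1\ldots a_m\,\overline{a_1\ldots a_m}^+\,\overline{a_1\ldots a_m}\,a_1\ldots a_m^-\bigr)^\f$, and feeding this explicit form into the lexicographic characterization of $\us_{q_{n+1}}$ (Lemma~\ref{lem:chara-uq}) shows that any sequence of $\us_{q_{n+1}}$ that agrees with $\Phi_x(p)=x_1(p)\ldots x_N(p)(a_1\ldots a_m\overline{a_1\ldots a_m})^\f$ through position $N+2m$ is forced, digit by digit, to continue with $(a_1\ldots a_m\overline{a_1\ldots a_m})^\f$. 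That rigidity is a nontrivial combinatorial lemma (essentially from de~Vries--Komornik), and without stating and proving it, the conclusion "isolation of $p$ is immediate" is not justified. You also spend a paragraph backtracking ("wait --- the right direction is\ldots"), which signals the same uncertainty: the argument for why $\Phi_x(r)\notin\us_r$ for $r\ne p$ near $p$ needs the combinatorial forcing via $\al(q_{n+1})$, not an estimate on $\sum d_i r^{-i}$ or a sign change at $q_n$. So: right structure, right candidate, right use of continuity, but the key lemma that makes the tail rigid is missing and the mechanism you propose in its place is not the correct one.
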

\begin{proof}
For $n\ge 1$ let $x\in\pi_p(\us_{q_{n+1}}^*)$ for some $p\in(q_n, q_{n+1})$. In the following we will show that $p$ is an isolated point of $\ub(x)$.
Note by the definition of $\vb$ that $\Phi_x(p)\in \us_{q_{n+1}}^*\subset\us_{q_{n+1}}=\us_p$. Then $\Phi_x(p)=(x_i(p))\in\us_p$. Furthermore,  by the definition of $\us_{q_{n+1}}^*$ it follows that $\Phi_x(p)$ ends with $\al(q_n)=(a_1\ldots a_m\overline{a_1\ldots a_m})^\f$ for some $m\ge 1$.   So there exists   $N\in\N$ such that
\[
\Phi_x(p) =x_1(p)\ldots x_{N}(p)(a_1\ldots a_m\overline{a_1\ldots a_m})^\f.
\]

Now suppose $p\in(q_n, q_{n+1})$ is not an isolated point of $\ub(x)$. Then by Lemma \ref{l21} (i) there exists
a $p'\in\ub(x)\cap (q_n, q_{n+1})$   such that $p'\ne p$ and  $\Phi_x(p')=(x_i(p'))$ coincides with $\Phi_x(p)$ for the first $N+2m$ digits, i.e.,
\begin{equation}\label{eq:isolated-1}
x_1(p')\ldots x_{N+2m}(p')=x_1(p)\ldots x_{N}(p)a_1\ldots a_m\overline{a_1\ldots a_m}.
\end{equation}
Observe that $\Phi_x(p')\in\us_{p'}=\us_{q_{n+1}}$ and
$\al(q_{n+1})=(a_1\ldots a_m\overline{a_1\ldots a_m}^+\,\overline{a_1\ldots a_m}a_1\ldots a_m^-)^\f.$
Then by (\ref{eq:isolated-1}) and Lemma \ref{lem:chara-uq} it follows that
\[
\Phi_x(p')=x_1(p)\ldots x_{N_1}(p)(a_1\ldots a_m\overline{a_1\ldots a_m})^\f=\Phi_x(p).
\]
This implies $p'=p$ by Lemma \ref{l21} (i), leading to a contradiction with our hypothesis.  So,  $p$ is an isolated point of $\ub(x)$.
\end{proof}

 Recall from Section \ref{sec: Introduction} that
  $X_{iso}=\set{x>0: \ub(x)\textrm{ contains isolated points}}.$
 By Lemma \ref{lem:cri-1} we see that $\ub(x)=\set{q_x}$ is  a singleton for any $x\ge x_G=M/(q_G-1)$. This implies that
  $[x_G, \f)\subset X_{iso}.$
  In the following result we show that the set $X_{iso}$ is dense in $[0, 1]$.
\begin{lemma}
  \label{lem:ux-dense-01}
  For any $x\in[0, 1]$ and any $\de>0$ the intersection $X_{iso}\cap(x-\de, x+\de)$ contains an interval.
\end{lemma}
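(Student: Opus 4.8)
The plan is to exploit Proposition \ref{prop:isolate-point} together with the density of $\us_{q_{n+1}}^*$ in $\us_{q_{n+1}}$. Fix $x\in[0,1]$ and $\de>0$. The idea is to find a connected component $(q_0,q_0^*)$ of $(1,M+1]\setminus\overline{\ub}$, an index $n\ge 1$, a base $p\in(q_n,q_{n+1})$, and a sequence $(d_i)\in\us_{q_{n+1}}^*$ such that $\pi_p((d_i))$ lies in $(x-\de,x+\de)$; in fact I want to produce a whole interval of such values, so that Proposition \ref{prop:isolate-point} forces $X_{iso}\cap(x-\de,x+\de)$ to contain an interval. First I would recall that since $x\in[0,1]$ we have $q_x=M+1$, and for bases $p$ close to $M+1$ the symbolic univoque sets $\us_p$ are large (indeed $\dim_H\us_p\to 1$); in particular, choosing the component $(q_0,q_0^*)$ whose right endpoint is $q_0^*=M+1$ — or rather working just below $M+1$ where $\overline{\ub}$ accumulates — one can take $n$ large so that $q_n$ is very close to $M+1$.

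The key steps, in order, are as follows. (1) Fix a component $(q_0,q_0^*)$ with $q_0^*$ close to $M+1$ (the largest plateau endpoint below $M+1$, or any component with $q_0^*$ as close to $M+1$ as needed), and write $\vb\cap(q_0,q_0^*)=\{q_n\}_{n=1}^\f$ with $q_n\nearrow q_0^*$. (2) Fix $n$ and a base $p_0\in(q_n,q_{n+1})$, and note that $\us_{q_{n+1}}^*$ is dense in $\us_{q_{n+1}}=\us_{p_0}$. Pick $(d_i)\in\us_{q_{n+1}}^*$ that begins with $\ep_1\cdots\ep_k$, where $(\ep_i)=\Phi_x(M+1)$ is the quasi-greedy expansion of $x$ in base $M+1$ and $k$ is chosen so large that any point with a $p$-expansion (for $p$ near $M+1$) starting with $\ep_1\cdots\ep_k$ is within $\de$ of $x$. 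Such a $(d_i)$ exists because the cylinder $[\ep_1\cdots\ep_k]$ is open and nonempty in $\us_{q_{n+1}}$ once $q_{n+1}$ is close enough to $M+1$ — this is where I use that $\overline{\ub}$ accumulates at $M+1$ and that the prefix $\ep_1\cdots\ep_k$ is admissible for bases near $M+1$. (3) Now let $p$ range over the subinterval $(q_n,q_{n+1})$ (or a subinterval thereof on which $(d_i)$ remains in $\us_p$, using Lemma \ref{lem:prop-Uq}(ii)): the map $p\mapsto\pi_p((d_i))$ is continuous and strictly decreasing, so its image is an interval $J$. Since every $\pi_p((d_i))$ with $p\in(q_n,q_{n+1})$ lies in $\pi_p(\us_{q_{n+1}}^*)$, Proposition \ref{prop:isolate-point} gives $J\subseteq X_{iso}$. (4) Finally, by the prefix choice in step (2), $J\subseteq(x-\de,x+\de)$, and $J$ is a nondegenerate interval because $p\mapsto\pi_p((d_i))$ is strictly monotone on the open interval $(q_n,q_{n+1})$. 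This completes the argument.

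The main obstacle I anticipate is step (2): ensuring that one can simultaneously (a) find a sequence in $\us_{q_{n+1}}^*$ with a prescribed long prefix $\ep_1\cdots\ep_k$ agreeing with the quasi-greedy expansion of $x$, and (b) keep the corresponding component of $(1,M+1]\setminus\overline{\ub}$ genuinely below $M+1$ so that Proposition \ref{prop:isolate-point} applies (it requires $q_0^*\le M+1$, and the construction of isolated points there uses $p\in(q_n,q_{n+1})$ with $q_{n+1}<q_0^*\le M+1$). Concretely, one must check that the cylinder $[\ep_1\cdots\ep_k]\cap\us_{q_{n+1}}^*$ is nonempty; this follows from the density of $\us_{q_{n+1}}^*$ in $\us_{q_{n+1}}$ provided $[\ep_1\cdots\ep_k]\cap\us_{q_{n+1}}\ne\emptyset$, which in turn holds once $\al(q_{n+1})$ lexicographically dominates the relevant shifts of $\ep_1\cdots\ep_k$ — guaranteed by taking $q_{n+1}$ close enough to $M+1$ via Lemma \ref{l21}(i), since $\al(q)\nearrow M^\f$ as $q\nearrow M+1$. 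One has to be slightly careful when $x=1$ or when $(\ep_i)$ ends in $M^\f$, but these are handled exactly as in the case analysis of Lemma \ref{lem:subset-u-x}. Modulo this bookkeeping, the monotonicity and continuity of $p\mapsto\pi_p((d_i))$ do the rest.
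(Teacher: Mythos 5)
Your approach is essentially the paper's: both use Proposition \ref{prop:isolate-point}, the density of $\us_{q_{n+1}}^*$ in $\us_{q_{n+1}}$, and the fact that varying $p$ over the open interval $(q_n,q_{n+1})$ sweeps out a nondegenerate subinterval of $X_{iso}$. The mechanism you choose for keeping the values near $x$ is where you diverge, and this is exactly where a real gap opens up.

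You insist that $(d_i)$ start with the prefix $\ep_1\cdots\ep_k$ of $\Phi_x(M+1)$, and then argue that $[\ep_1\cdots\ep_k]\cap\us_{q_{n+1}}\ne\emptyset$ ``once $\al(q_{n+1})$ lexicographically dominates the relevant shifts of $\ep_1\cdots\ep_k$.'' But the quasi-greedy expansion of a typical $x\in[0,1]$ need not satisfy any bounded-run condition, so this dominance is not automatic and really must be secured \emph{after} fixing $k$ by pushing $q_{n+1}$ close to $M+1$. That in turn means your choices are nested: first $\de\to k$, then $k\to q_{n+1}$, and each threshold is opaque because you have no a priori control on the runs of $0$'s and $M$'s in $(\ep_i)$. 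You flag the case where $(\ep_i)$ ends in $M^\f$ as ``handled exactly as in Lemma \ref{lem:subset-u-x},'' but that lemma builds different sequences altogether (inserting blocks, not preserving a prefix of $(\ep_i)$), so it does not directly patch this step. The paper sidesteps all of this cleanly: it first replaces $x$ by a nearby $y\in(x-\de/3,x+\de/3)$ whose quasi-greedy expansion $(y_i)$ has runs of $0$'s and $M$'s bounded by $N_1$, which immediately gives $(y_i)\in\us_q$ for all $q>p_{N_1}$ by Lemma \ref{lem:chara-uq}. Then continuity of $q\mapsto\pi_q((y_i))$ gives an $N_2$ with $\pi_q((y_i))\in(x-2\de/3,x+2\de/3)$ for all $q\in[p_{N_2},M+1]$, and finally the density of $\us_{q_{n+1}}^*$ produces a $(z_i)$ with $|\pi_q((z_i))-\pi_q((y_i))|<\de/3$ uniformly over the chosen interval. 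This three-$\de/3$ split eliminates the admissibility bookkeeping entirely and decouples the choice of the component from the prefix-matching. Your route can be made to work, but the ``modulo bookkeeping'' you wave past is precisely the content the paper's choice of $y$ is designed to avoid, and as written your proof is not complete.
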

\begin{proof}
  Take $x\in[0, 1]$ and $\de>0$. Then there exist  $y\in(x-\frac{\de}{3}, x+\frac{\de}{3})$ and an integer $N_1=N_1(x, \de)>0$ such that the quasi-greedy expansion $\Phi_y(M+1)=y_1y_2\ldots$ contains neither $N_1$ consecutive $0$'s nor $N_1$ consecutive $M$'s. By Lemmas \ref{lem:chara-uq} and  \ref{lem:prop-Uq} (i) this implies
  \begin{equation}\label{eq:sep-26-0}
  (y_i)\in\us_{q}\quad\forall q>p_{N_1},
  \end{equation}
  where $p_n$ is the root of $\sum_{i=1}^n\frac{M}{p_n^i}=1$ in $(1, M+1)$. Clearly, $p_n\nearrow M+1$ as $n\to\f$. Note that the map
  \[g:[p_{N_1}, M+1]\to\mathbb R;\quad q\mapsto\pi_q((y_i))\] is continuous, and $g(M+1)=y$. So there exists an integer  $N_2>N_1$ such that
  \begin{equation}\label{eq:sep-26-1}
  g(q)\in\left(y-\frac{\de}{3}, y+\frac{\de}{3}\right)\subseteq\left(x-\frac{2\de}{3}, x+\frac{2\de}{3}\right)\quad\forall q\in[p_{N_2}, M+1].
  \end{equation}
  Let $(q_0, q_0^*)\subset[p_{N_2}, M+1]$ be a connected component of $(1, M+1]\setminus\overline{\ub}$, and write $(q_0, q_0^*)\setminus\vb=\bigcup_{n=0}^\f(q_n, q_{n+1})$.
Take $n\ge 1$.
 Recall from  \cite[Theorem 1.4]{DeVries_Komornik_2008} that the set $\us_{q_{n+1}}^*$ is dense in $\us_{q_{n+1}}$ with respect to the metric $\rho$ defined in (\ref{eq:metric-rho}). Note by (\ref{eq:sep-26-0}) that $(y_i)\in\us_{q_{n+1}}$.  Then  there exists   a sequence $(z_i)\in\us_{q_{n+1}}^*$ such that
  \begin{equation}\label{eq:sep-26-3}
  |\pi_q((z_i))-g(q)|=|\pi_q((z_i))-\pi_q((y_i))|<\frac{\de}{3}\quad\forall q\in(q_n, q_{n+1}).
  \end{equation}
 Since $(q_n, q_{n+1})\subset[p_{N_2}, M+1]$, by (\ref{eq:sep-26-1}) and (\ref{eq:sep-26-3})  it follows that
  \[
  z^q:= \pi_q((z_i))\in(x-\de, x+\de)\quad\forall q\in(q_n, q_{n+1}).
  \]
Furthermore, by using   Proposition \ref{prop:isolate-point} we obtain  that $\ub(z^q)$ contains isolated points for any $q\in(q_n, q_{n+1})$. In other words, $X_{iso}\cap(x-\de, x+\de)$ contains the sub-interval $(z^{q_{n+1}}, z^{q_n})$.
\end{proof}

In the following we consider isolated points of $\ub(x)$ for $x>1$. When $M=1$ we show that $X_{iso}\supset(1,\f)$.
\begin{proposition}\label{prop:iso-bigger}
Let $M=1$. Then for any $x>1$ the set $\ub(x)$ contains isolated points.
\end{proposition}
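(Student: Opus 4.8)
The plan is to reduce to $x\in(1,x_G)$ and then, for each such $x$, to realise $x$ as $\pi_p\bigl((d_i)\bigr)$ for a base $p$ and a sequence $(d_i)$ to which Proposition~\ref{prop:isolate-point} applies. For $x\ge x_G$ Lemma~\ref{lem:cri-1} gives $\ub(x)=\set{q_x}$, which is trivially a set with an isolated point, so assume $x\in(1,x_G)$; then $q_x=1+1/x\in(q_G,2)$ and $\tfrac1{q_x-1}=x$. Fix a connected component $(q_0,q_0^*)$ of $(1,2]\setminus\overline{\ub}$ and list $\vb\cap(q_0,q_0^*)=\set{q_n}_{n\ge1}$ increasingly (so $q_n\nearrow q_0^*$ and $\al(q_n)=(a_1\cdots a_m\overline{a_1\cdots a_m})^\f$, with $\al(q_{n+1})$ of the de Vries--Komornik form used in the proof of Proposition~\ref{prop:isolate-point}). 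For $n\ge1$ and an integer $N\ge0$ consider $(d_i)=1^N\overline{\al(q_n)}$. Using Lemma~\ref{lem:chara-uq}, the shift-maximality of $\al(q_n)$ (Remark~\ref{rem:chara-alpha}) and $\al(q_n)\prec\al(q_{n+1})$, one checks that $\overline{\al(q_n)}\in\us_{q_{n+1}}$ and that prefixing the block $1^N$ creates no violation because $\al(q_{n+1})$ begins with the digit $1$; hence $1^N\overline{\al(q_n)}\in\us_{q_{n+1}}$, and as it ends with $\overline{\al(q_n)}$ we get $1^N\overline{\al(q_n)}\in\us_{q_{n+1}}^*$. Since $\al(q_n)$ is the quasi-greedy $q_n$-expansion of $1$, $\pi_{q_n}(\al(q_n))=1$, so
\[
\pi_{q_n}\bigl(1^N\overline{\al(q_n)}\bigr)=\frac1{q_n-1}-q_n^{-N},\qquad
\pi_{q_{n+1}}\bigl(1^N\overline{\al(q_n)}\bigr)=\frac1{q_{n+1}-1}-q_{n+1}^{-N}\pi_{q_{n+1}}(\al(q_n)).
\]
Because $p\mapsto\pi_p\bigl(1^N\overline{\al(q_n)}\bigr)$ is continuous and strictly decreasing on $(q_n,q_{n+1})$, its image is the open interval
\[
J_{n,N}:=\Bigl(\pi_{q_{n+1}}\bigl(1^N\overline{\al(q_n)}\bigr),\ \tfrac1{q_n-1}-q_n^{-N}\Bigr)\subseteq\bigcup_{p\in(q_n,q_{n+1})}\pi_p(\us_{q_{n+1}}^*),
\]
so Proposition~\ref{prop:isolate-point} puts every point of $J_{n,N}$ into $X_{iso}$. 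It therefore suffices to show $(1,x_G)\subseteq\bigcup_{(q_0,q_0^*)}\bigcup_{n\ge1}\bigcup_{N\ge0}J_{n,N}$.

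Fix $x\in(1,x_G)$ and $q_x\in(q_G,2)$. If $q_x$ lies in a gap $(q_n,q_{n+1})$ of $\vb$ with $n\ge1$ (or equals such a right endpoint $q_{n+1}$), then $q_n<q_x\le q_{n+1}$, and both defining inequalities of $J_{n,N}$ hold for all large $N$: the lower one because $\pi_{q_{n+1}}(1^N\overline{\al(q_n)})<\tfrac1{q_{n+1}-1}\le\tfrac1{q_x-1}=x$, and the upper one because $x=\tfrac1{q_x-1}<\tfrac1{q_n-1}$ forces $q_n^{-N}<\tfrac1{q_n-1}-x$ eventually; hence $x\in J_{n,N}$. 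If instead $q_x\in\overline{\ub}$ and is approached from the left by non-extreme $\vb$-points $q_n\nearrow q_x$, then $q_n<q_{n+1}<q_x$, and one needs an integer $N$ with $N_1(x,n)<N<N_2(x,n)$, where $N_1:=-\log_{q_n}\!\bigl(\tfrac1{q_n-1}-x\bigr)$ and $N_2:=\log_{q_{n+1}}\!\bigl(\pi_{q_{n+1}}(\al(q_n))\big/(\tfrac1{q_{n+1}-1}-x)\bigr)$. A short computation using $\tfrac1{q_n-1}-x\asymp q_x-q_n$ and $\pi_{q_{n+1}}(\al(q_n))\to1$ gives $N_2-N_1=\tfrac1{\log q_x}\log\tfrac{q_x-q_n}{q_x-q_{n+1}}+o(1)$, which tends to $\f$ because the $\vb$-points converge to $q_x$ super-geometrically (a period-doubling cascade); choosing $n$ large and $N\in(N_1,N_2)$ again yields $x\in J_{n,N}$.

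The delicate case, and the main obstacle, is when $q_x$ lies in the \emph{first} gap $(q_0,q_1]$ of its complementary component $(q_0,q_0^*)$ with $q_0\in\overline{\ub}\setminus\ub$ (equivalently $x$ is just below $\tfrac1{q_0-1}$, i.e.\ just above the ``$x_G$-type'' critical value of that component), together with the analogous positions relative to the exceptional points of $\overline{\ub}$. Proposition~\ref{prop:isolate-point} deliberately omits a component's first gap, so for such $x$ one cannot work inside $(q_0,q_0^*)$; moreover no base $p\ge q_1$ can represent $x$ (as $\tfrac1{p-1}<\tfrac1{q_1-1}<x$), so one must realise $x=\pi_p\bigl(1^N\overline{\al(q_m)}\bigr)$ with $p<q_0$ lying in a gap $(q_m,q_{m+1})$, $m\ge1$, of a \emph{suitably chosen earlier} component. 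The geometry closes precisely when that earlier gap has length exceeding its distance to $q_x$, since that is what makes the window $\bigl(N_1(x,m),N_2(x,m)\bigr)$ long enough to contain an integer; one arranges this by exploiting, once more, the super-geometric convergence of the $\vb$-points of each component to its right endpoint, so that among the earlier components and their gaps such a favourably proportioned one always exists. Carrying out this selection and checking that it covers \emph{all} of $(1,x_G)$ — not merely a dense subset — is where the real work of the proof lies.
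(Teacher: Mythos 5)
Your core idea matches the paper's: reduce to $x\in(1,x_G)$ via Lemma~\ref{lem:cri-1}, then exhibit $x$ as $\pi_p\bigl((d_i)\bigr)$ with $p$ in a non-initial gap $(q_n,q_{n+1})$ of some component and $(d_i)\in\us_{q_{n+1}}^*$, so that Proposition~\ref{prop:isolate-point} applies. Your verification that $1^N\overline{\al(q_n)}\in\us_{q_{n+1}}^*$ is correct, and so are the two endpoint formulas. But the proposal is incomplete, and you say so yourself: the last paragraph declares that ``carrying out this selection and checking that it covers \emph{all} of $(1,x_G)$ --- not merely a dense subset --- is where the real work of the proof lies.'' That sentence names the gap without closing it.

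The gap is not cosmetic. The intervals $J_{n,N}$ you get from $1^N\overline{\al(q_n)}$ do not overlap consecutively in $N$ (a direct computation with the first component, $n=1$, already shows a gap between $J_{1,1}$ and $J_{1,2}$ above the level $x=1$), so for a fixed gap $(q_n,q_{n+1})$ the union over $N$ is generally not an interval. This is exactly why your case analysis on the location of $q_x$ relative to $\vb$ and $\overline{\ub}$ proliferates into a ``delicate case'' that you only sketch heuristically (``super-geometric convergence,'' ``a favourably proportioned one always exists''). The paper avoids this entirely by choosing different sequences: with $M=1$ and working only in the first component $(1,q_{KL})$, it takes $\c_{n,k}=\tau_1\cdots\tau_{2^{n-1}}(\overline{\tau_1\cdots\tau_{2^{n-1}}}^+)^k(\overline{\tau_1\cdots\tau_{2^n}}^+)^\f$, which coincides with your $1^{k+1}\overline{\al(q_1)}$ when $n=1$ but diverges from your construction for $n\ge 2$. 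The point of this choice is the telescoping identity $\c_{n,1}=\c_{n+1,\f}$ (display~\eqref{eq:kong-1}), which forces $\bigcup_{k\ge1}\bigl(\pi_{q_{n+1}}(\c_{n,k}),\pi_{q_n}(\c_{n,k})\bigr)=(z_{n+1},z_n)$ with $z_n:=\pi_{q_n}(\c_{n,\f})$ decreasing from $z_1=x_G$ to $1$, so the union over $n$ is $(1,x_G)\setminus\{z_n\}$ with no case analysis on the position of $q_x$. The leftover countable set $\{z_n\}$ is then handled by a second family $\d_{n,k}$ (Lemma~\ref{lem:cons-d-nk}) decreasing to the same limit $\c_{n,\f}$. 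Your construction has no analogue of the overlap check~\eqref{eq:oct-8-5}--\eqref{eq:oct-8-6} and no analogue of the exceptional-point family $\d_{n,k}$, and without them the argument does not cover $(1,x_G)$.

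To repair your proof along its own lines you would need, at a minimum, (a) a genuine overlap estimate showing $J_{n,N+1}^-<J_{n,N}^+$ for all $N$ above some explicit threshold, (b) an argument that the residual set not covered for small $N$ is reached by other $(n,N)$ pairs, and (c) a treatment of the boundary values $x=\pi_{q_n}\bigl(1^\f\overline{\al(q_n)}\bigr)$ analogous to the paper's use of $\d_{n,k}$. None of these is present, and items (a)--(b) look harder for your sequences than for the paper's precisely because yours lack the telescoping identity.
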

Note by Lemma \ref{lem:cri-1} that $X_{iso}\supset[x_G, \f)$. {Thus} it suffices to prove that $X_{iso}$ covers $(1, x_G)$. In the following we fix $M=1$, and we will prove Proposition \ref{prop:iso-bigger} in several steps.
 Let
  $(q_0, q_0^*)=(1, q_{KL})$ be the first connected component of $(1, 2]\setminus\overline{\ub}$. Then $\vb\cap(q_0, q_0^*)=\set{q_1, q_2, q_3,\ldots}$ satisfying
\[
1=q_0<q_1<q_2<q_3<\cdots<q_{0}^*=q_{KL},\quad\textrm{and}\quad q_n\nearrow q_{KL}\textrm{ as }n\to\f.
\]
Furthermore, for each $n\ge 1$ the base $q_n\in(1, q_{KL})$ admits the  quasi-greedy expansion
\begin{equation}\label{eq:oct-8-1}
\al(q_n)=(\tau_1\ldots \tau_{2^{n}}^-)^\f,
\end{equation}
where $(\tau_i)_{i=0}^\f=01101001\ldots$ is the classical Thue-Morse sequence (cf.~\cite{Allouche_Shallit_1999}).

The following property  for the sequence $(\tau_i)$ is well-known {(see, for example, \cite{Komornik_Loreti_2002})}.
\begin{lemma}
  \label{lem:thue-morse}For any integer $n\ge 0$ we have
  \begin{itemize}
    \item[{\rm(i)}] $\tau_{2^n+1}\ldots \tau_{2^{n+1}}=\overline{\tau_1\ldots\tau_{2^n}}^+$.

    \item[{\rm(ii)}]
  $
    \overline{\tau_1\ldots\tau_{2^n-i}}\prec\tau_{i+1}\ldots \tau_{2^n}\lle \tau_1\ldots\tau_{2^n-i}\quad\forall~ 0\le i<2^n.
   $
  \end{itemize}
\end{lemma}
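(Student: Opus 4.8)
The plan is to derive both identities from the self-similarity of the Thue--Morse sequence, namely $t_{2^n+k}=1-t_k$ for $0\le k<2^n$ (equivalently, the standard Thue--Morse words $u_n=t_0t_1\cdots t_{2^n-1}$ satisfy $u_{n+1}=u_n\overline{u_n}$), which is itself immediate from $t_{2m}=t_m$, $t_{2m+1}=1-t_m$ by induction on the number of binary digits of the index. Throughout I will use that a power of two has exactly one binary digit equal to $1$, so $\tau_{2^n}=\tau_{2^{n+1}}=1$ for all $n\ge 0$ (with the convention $\tau_i=t_i$).

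For (i): the self-similarity gives $\tau_{2^n+k}=\overline{\tau_k}$ for $1\le k\le 2^n-1$ (the bar being the digit complement, legitimate since $M=1$), so $\tau_{2^n+1}\cdots\tau_{2^{n+1}}=\overline{\tau_1}\cdots\overline{\tau_{2^n-1}}\,1$; and since the last digit $\overline{\tau_{2^n}}=0$ of $\overline{\tau_1\cdots\tau_{2^n}}$ is $<M$, forming $\overline{\tau_1\cdots\tau_{2^n}}^{+}$ merely increments that digit with no carry, producing the same word $\overline{\tau_1}\cdots\overline{\tau_{2^n-1}}\,1$. A digitwise comparison then gives (i).

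For (ii) I would induct on $n$; the case $n=0$ reads $\overline{\tau_1}=0\prec1=\tau_1\preccurlyeq\tau_1$. Assume (ii) at level $n$ and fix $0\le i<2^{n+1}$. If $i=0$ the claim follows from $\tau_1=1>0=\overline{\tau_1}$. If $1\le i<2^n$, then (i) yields the factorisations
\[
\tau_1\cdots\tau_{2^{n+1}-i}=(\tau_1\cdots\tau_{2^n})\,\overline{\tau_1\cdots\tau_{2^n-i}},\qquad
\tau_{i+1}\cdots\tau_{2^{n+1}}=(\tau_{i+1}\cdots\tau_{2^n})\,\overline{\tau_1\cdots\tau_{2^n}}^{+},
\]
and I compare the length-$(2^n-i)$ leading blocks by the level-$n$ hypothesis with parameter $i$: its strict left half $\overline{\tau_1\cdots\tau_{2^n-i}}\prec\tau_{i+1}\cdots\tau_{2^n}$ gives the left inequality at level $n+1$ at once, while its right half $\tau_{i+1}\cdots\tau_{2^n}\preccurlyeq\tau_1\cdots\tau_{2^n-i}$ gives the right inequality unless it is an equality, in which case the comparison passes to the tails and is settled by invoking the level-$n$ hypothesis once more with parameter $2^n-i$, whose strict left half compares $\overline{\tau_1\cdots\tau_i}$ with $\tau_{2^n-i+1}\cdots\tau_{2^n}$. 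Finally, if $2^n\le i<2^{n+1}$, write $i=2^n+j$ with $0\le j<2^n$; by (i), $\tau_{i+1}\cdots\tau_{2^{n+1}}=\overline{\tau_{j+1}\cdots\tau_{2^n}}^{+}$, a word of length $2^{n+1}-i=2^n-j$. Applying the digit complement (which reverses $\preccurlyeq$) to the level-$n$ hypothesis with parameter $j$ turns $\overline{\tau_1\cdots\tau_{2^n-j}}\prec\tau_{j+1}\cdots\tau_{2^n}\preccurlyeq\tau_1\cdots\tau_{2^n-j}$ into $\overline{\tau_1\cdots\tau_{2^n-j}}\preccurlyeq\overline{\tau_{j+1}\cdots\tau_{2^n}}\prec\tau_1\cdots\tau_{2^n-j}$; since the last digit of $\overline{\tau_{j+1}\cdots\tau_{2^n}}$ is $\overline{\tau_{2^n}}=0<M$, incrementing it keeps the word strictly above $\overline{\tau_1\cdots\tau_{2^n-j}}$ and (a one-line binary-digit argument) no larger than $\tau_1\cdots\tau_{2^n-j}$, which is exactly (ii) at level $n+1$ in this range.

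The routine-but-delicate part of the argument is the lexicographic bookkeeping: handling the tie subcase in the $1\le i<2^n$ branch (where the hypothesis only yields $\preccurlyeq$ on the leading block) and checking that $\mathbf c\mapsto\overline{\mathbf c}^{+}$ respects strict versus non-strict inequalities. Both points rely essentially on $M=1$, so that digits are binary and $\overline{\mathbf c}^{+}$ incurs no carry whenever $\mathbf c$ ends in $\tau_{2^n}=1$. A less computational alternative, which I would keep in reserve, is to recognise (ii) as the finite prefix/reflection conditions obeyed by the admissible sequences $\al(q_n)=(\tau_1\ldots\tau_{2^n}^{-})^{\infty}$ from \eqref{eq:oct-8-1} and read it off from the lexicographic characterisations recalled in Remark~\ref{rem:chara-alpha} and Lemma~\ref{lem:chara-uq}.
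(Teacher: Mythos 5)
The paper itself offers no proof of this lemma: it is stated as ``well-known'' with a pointer to Komornik and Loreti (2002). So any proof you give is, by default, supplying content the paper omits; the question is simply whether your argument is sound. It is.

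Your treatment of (i) is correct: the self-similarity $\tau_{2^n+k}=\overline{\tau_k}$ for $1\le k\le 2^n-1$, together with $\tau_{2^{n+1}}=1$ and the observation that $\overline{\tau_1\cdots\tau_{2^n}}$ ends in $\overline{\tau_{2^n}}=0$ so that the ${}^+$ is a carry-free increment, gives the identity digit by digit.

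Your inductive proof of (ii) is also correct, and the two places that genuinely require care are handled. In the range $1\le i<2^n$, the factorisations
\[
\tau_1\cdots\tau_{2^{n+1}-i}=(\tau_1\cdots\tau_{2^n-i})\bigl(\tau_{2^n-i+1}\cdots\tau_{2^n}\,\overline{\tau_1\cdots\tau_{2^n-i}}\bigr),\qquad
\tau_{i+1}\cdots\tau_{2^{n+1}}=(\tau_{i+1}\cdots\tau_{2^n})\,\overline{\tau_1\cdots\tau_{2^n}}^{+}
\]
make the leading $(2^n-i)$-block comparison exactly the level-$n$ statement with parameter $i$; the strict left inequality lands immediately, while the right inequality, in the tie case, passes to tails whose leading $i$-blocks are $\overline{\tau_1\cdots\tau_i}$ and $\tau_{2^n-i+1}\cdots\tau_{2^n}$, separated strictly by the level-$n$ statement with parameter $2^n-i$ (both parameters lie in $\{1,\dots,2^n-1\}$, so the hypothesis applies). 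In the range $2^n\le i<2^{n+1}$, your complementation step and the effect of the trailing ${}^+$ are correct precisely because $\overline{\tau_{j+1}\cdots\tau_{2^n}}$ always ends in $0$: the increment is carry-free and changes only the last digit, so the strict upper bound $\overline{\tau_{j+1}\cdots\tau_{2^n}}\prec\tau_1\cdots\tau_{2^n-j}$ degrades at worst to $\preccurlyeq$ after the increment, while the weak lower bound becomes strict. The ``one-line binary-digit argument'' you flag is real but routine, as you note.

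Your reserve alternative---reading (ii) off from the lexicographic admissibility of $\al(q_n)=(\tau_1\cdots\tau_{2^n}^{-})^{\infty}$ via Remark~\ref{rem:chara-alpha} and Lemma~\ref{lem:chara-uq}---would also work, but it implicitly assumes the identification of the $q_n$ with these periodic expansions, which in the paper is itself taken from the same Komornik--Loreti source. The direct induction you carry out is more self-contained and is the standard way such Thue--Morse prefix inequalities are verified.
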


Now we construct sequences in $\us_{q_{n+1}}^*$.
\begin{lemma}
  \label{lem:cons-c-nk}
  For $n\ge 1$ and $k\ge 1$ let
  \[
  \c_{n,k}:=\tau_1\ldots\tau_{2^{n-1}}(\overline{\tau_1\ldots\tau_{2^{n-1}}}^+)^k(\overline{\tau_1\ldots \tau_{2^n}}^+)^\f.
  \]
  Then $\c_{n,k}\in\us_{q_{n+1}}^*$ for all $k\ge 1$.
\end{lemma}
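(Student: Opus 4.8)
The goal is to verify that $\c_{n,k}=\tau_1\ldots\tau_{2^{n-1}}(\overline{\tau_1\ldots\tau_{2^{n-1}}}^+)^k(\overline{\tau_1\ldots\tau_{2^n}}^+)^\f$ lies in $\us_{q_{n+1}}^*$. Recall $\us_{q_{n+1}}^*=\us_{q_{n+1}}\setminus\us_{q_n}$ consists of those $(d_i)\in\us_{q_{n+1}}$ which end with $\al(q_n)$ or $\overline{\al(q_n)}$. So the proof splits into two tasks: (a) show $\c_{n,k}$ ends with $\al(q_n)$ (up to reflection), which will place it in $\us_{q_{n+1}}$ but not in $\us_{q_n}$; and (b) verify the lexicographic conditions of Lemma~\ref{lem:chara-uq} for the base $q_{n+1}$, namely that whenever a digit is $<M=1$ (i.e.\ equals $0$) the tail is $\prec\al(q_{n+1})$, and whenever a digit is $>0$ (i.e.\ equals $1$) the reflected tail is $\prec\al(q_{n+1})$.

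For task (a), I would use Lemma~\ref{lem:thue-morse}(i): $\overline{\tau_1\ldots\tau_{2^{n-1}}}^+=\tau_{2^{n-1}+1}\ldots\tau_{2^n}$, so the block $(\overline{\tau_1\ldots\tau_{2^n}}^+)^\f$ is exactly the periodic tail $\al(q_{n+1})$ up to a shift, while $(\overline{\tau_1\ldots\tau_{2^{n-1}}}^+)^\f$ reproduces $\al(q_n)$ by \eqref{eq:oct-8-1}. More precisely, since $\overline{\tau_1\ldots\tau_{2^{n-1}}}^+ = \tau_{2^{n-1}+1}\ldots\tau_{2^n}$, I claim $\tau_1\ldots\tau_{2^{n-1}}(\overline{\tau_1\ldots\tau_{2^{n-1}}}^+)^\f$ equals $(\tau_1\ldots\tau_{2^{n-1}}\tau_{2^{n-1}+1}\ldots\tau_{2^n})^\f$-type pattern; but comparing with \eqref{eq:oct-8-1} one must be careful about the $(-)$ on the last digit of the period of $\al(q_n)$. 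The cleaner route: observe $\c_{n,k}$ has a tail equal to $(\overline{\tau_1\ldots\tau_{2^n}}^+)^\f$, and $\al(q_n)=(\tau_1\ldots\tau_{2^n}^-)^\f$; I would show $\overline{(\tau_1\ldots\tau_{2^n}^-)^\f}=(\overline{\tau_1\ldots\tau_{2^n}}^+)^\f$ by a digitwise check ($M-0=1$, $M-1=0$, and reflecting a $(-)$ turns it into a $(+)$), so the tail of $\c_{n,k}$ is precisely $\overline{\al(q_n)}$. That gives membership in $\us_{q_{n+1}}$ as soon as (b) holds, and simultaneously excludes it from $\us_{q_n}$.

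For task (b), the verification of Lemma~\ref{lem:chara-uq} at base $q_{n+1}$ is the technical heart. I need $\al(q_{n+1})=(\tau_1\ldots\tau_{2^{n+1}}^-)^\f$, and I must check that every shift $\sigma^j(\c_{n,k})$ obeys the one-sided inequalities. The shifts fall into three regimes: (i) shifts starting inside the initial segment $\tau_1\ldots\tau_{2^{n-1}}$ or within the first few copies of $\overline{\tau_1\ldots\tau_{2^{n-1}}}^+$, where I compare against $\al(q_{n+1})$ using that $\al(q_{n+1})\succ\al(q_n)$ and the Thue–Morse self-similarity from Lemma~\ref{lem:thue-morse}(ii); (ii) shifts deep inside the periodic Thue–Morse-structured blocks $(\overline{\tau_1\ldots\tau_{2^{n-1}}}^+)^k$, where Lemma~\ref{lem:thue-morse}(ii) applied at level $n-1$ and at level $n$ gives the bounds directly; and (iii) shifts into the final periodic tail $(\overline{\tau_1\ldots\tau_{2^n}}^+)^\f=\overline{\al(q_n)}$, where I already know from $q_n\in\ub$ that $\al(q_n)$ (hence its reflection) satisfies the univoque inequalities for $q_n$, and these are inherited at $q_{n+1}>q_n$ since the set-valued map $q\mapsto\us_q$ is non-decreasing (Lemma~\ref{lem:prop-Uq}(i)). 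The main obstacle I anticipate is regime (i)–(ii), specifically the junction between $\tau_1\ldots\tau_{2^{n-1}}$ and the string of $(\overline{\tau_1\ldots\tau_{2^{n-1}}}^+)$ blocks: at those positions the digit after a shift can be $1$, forcing a reflected-tail comparison $\overline{\sigma^j(\c_{n,k})}\prec\al(q_{n+1})$, and one must exploit $\overline{\tau_1\ldots\tau_{2^{n-1}}}^+=\tau_{2^{n-1}+1}\ldots\tau_{2^n}$ together with Lemma~\ref{lem:thue-morse}(ii) to see that the reflected tail agrees with a genuine Thue–Morse prefix long enough to force strict inequality against $\al(q_{n+1})$ whose period has length $2^{n+1}$. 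I would handle this by writing the relevant tails explicitly in terms of $\tau$-blocks of length $2^{n-1}$, $2^n$, $2^{n+1}$ and reducing every comparison to an instance of Lemma~\ref{lem:thue-morse}(ii), checking separately the boundary case where a $(+)$ or $(-)$ modification of the last digit makes the inequality strict or an equality. Once all three regimes are dispatched, Lemma~\ref{lem:chara-uq} yields $\c_{n,k}\in\us_{q_{n+1}}$, and combined with task (a) we conclude $\c_{n,k}\in\us_{q_{n+1}}^*$ for every $k\ge1$.
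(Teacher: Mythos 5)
Your overall strategy is the same as the paper's: reduce membership in $\us_{q_{n+1}}$ to the lexicographic conditions of Lemma \ref{lem:chara-uq}, observe that the tail $(\overline{\tau_1\ldots\tau_{2^n}}^+)^\f$ is exactly $\overline{\al(q_n)}$ (which settles the ``$*$'' part), and then check $\overline{\al(q_{n+1})}\prec\si^j(\c_{n,k})\prec\al(q_{n+1})$ for all $j\ge 1$ by splitting the shifts into three regimes and invoking the Thue--Morse inequalities of Lemma \ref{lem:thue-morse}. Tasks (a) and regimes (i)--(ii) of your task (b) correspond, in outline, to the paper's Cases (I)--(II), though you leave the actual block-by-block comparisons as a plan rather than carrying them out.

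There is, however, a genuine flaw in your regime (iii). You justify the shifts landing in the tail $\overline{\al(q_n)}$ by saying that ``$q_n\in\ub$'' and that the univoque inequalities for $q_n$ are ``inherited at $q_{n+1}$'' via the monotonicity of $q\mapsto\us_q$ (Lemma \ref{lem:prop-Uq}(i)). Both premises fail. First, the bases $q_n$ lie in $\vb\cap(1,q_{KL})$, strictly below the Komornik--Loreti constant $q_{KL}=\min\ub$, so $q_n\notin\ub$ (e.g.\ for $M=1$, $q_1$ is the golden ratio). Second, and more to the point, $\al(q_n)=(\tau_1\ldots\tau_{2^n}^-)^\f$ and its reflection do \emph{not} satisfy the univoque inequalities at base $q_n$: since $\si^{2^n}(\al(q_n))=\al(q_n)$, the required strict inequality fails, i.e.\ $\al(q_n),\overline{\al(q_n)}\notin\us_{q_n}$, so there is nothing for the monotone set-valued map to inherit. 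What you actually need for these shifts is the strict two-sided bound against $\al(q_{n+1})$, and this comes from the strict gap between levels, namely Lemma \ref{lem:thue-morse}(ii) applied with $n-1$ replaced by $n$ (equivalently, $\overline{\al(q_{n+1})}\prec\si^j(\al(q_n))\lle\al(q_n)\prec\al(q_{n+1})$); this is exactly the paper's Case (III), and it is the same computation you already set up for regime (ii), so the fix is local --- but as written the step does not go through.
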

\begin{proof}
Note by (\ref{eq:oct-8-1}) that $\c_{n,k}$ ends with $(\overline{\tau_1\ldots \tau_{2^n}^-})^\f=\overline{\al(q_n)}$. Then by Lemma \ref{lem:chara-uq} it suffices to prove
\begin{equation}\label{eq:c-nk-1}
\overline{\al(q_{n+1})}\prec \si^j(\c_{n,k})\prec \al(q_{n+1})\quad\forall j\ge 1,
\end{equation}
where $\si$ is the left-shift map.
Since  $\al(q_{n+1})$ begins with $\tau_1\ldots \tau_{2^n}$, we prove (\ref{eq:c-nk-1}) by considering the following three cases.

(I). $1\le j<2^{n-1}$. Then (\ref{eq:c-nk-1}) follows by Lemma \ref{lem:thue-morse} (ii), {which implies} that
\[
\overline{\tau_1\ldots\tau_{2^{n-1}-j}}\prec \tau_{j+1}\ldots \tau_{2^{n-1}}\lle \tau_1\ldots\tau_{2^{n-1}-j}\quad\textrm{and}\quad \overline{\tau_1\ldots \tau_j}\prec \tau_{2^{n-1}-j+1}\ldots \tau_{2^{n-1}}.
\]

(II). $2^{n-1}\le j<(k+1)2^{n-1}$. Note that $\si^{2^{n-1}}(\c_{n,k})=(\overline{\tau_1\ldots\tau_{2^{n-1}}}^+)^k(\overline{\tau_1\ldots\tau_{2^n}}^+)^\f$. Then (\ref{eq:c-nk-1}) again follows by Lemma \ref{lem:thue-morse} (ii), {which implies} that
\begin{equation}\label{eq:kong-2}
\overline{\tau_1\ldots \tau_{2^{n-1}-i}}\prec \overline{\tau_{i+1}\ldots\tau_{2^{n-1}}}^+\lle \tau_1\ldots\tau_{2^{n-1}-i}\quad\textrm{and}\quad \overline{\tau_{1}\ldots \tau_i}\prec \tau_{2^{n-1}-i+1}\ldots \tau_{2^{n-1}}
\end{equation}
for any $0\le i<2^{n-1}$.

(III). $j\ge (k+1)2^{n-1}$. Then   {(\ref{eq:c-nk-1}) follows from (\ref{eq:kong-2}) with $n-1$ replaced by $n$}.
\end{proof}

By the definition of $\c_{n,k}$ it is easy to see that
\[\c_{n,k}\nearrow \c_{n,\f}:=\tau_1\ldots \tau_{2^{n-1}}(\overline{\tau_1\ldots \tau_{2^{n-1}}}^+)^\f\quad\textrm{as } k\to\f.\]
 In the following lemma we construct   sequences in $\us_{q_{n+1}}^*$ {that}   decrease  to $\c_{n,\f}$.

\begin{lemma}
  \label{lem:cons-d-nk}
  For $n\ge 2$ and $k\ge 1$ let
  \[
  \d_{n,k}:=\tau_1\ldots\tau_{2^{n-1}}(\overline{\tau_1\ldots \tau_{2^{n-1}}}^+)^k\,\overline{\tau_1\ldots\tau_{2^{n-2}}}^+\,(\overline{\tau_1\ldots \tau_{2^{n}}}^+)^\f.
  \]
  Then $\d_{n,k}\in\us_{q_{n+1}}^*$ for all $k\ge 1$.
\end{lemma}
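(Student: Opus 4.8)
The plan is to follow the proof of Lemma~\ref{lem:cons-c-nk} step by step, since $\d_{n,k}$ is obtained from $\c_{n,k}$ by splicing in the single extra block $C:=\overline{\tau_1\ldots\tau_{2^{n-2}}}^+$ just before the periodic tail. As for $\c_{n,k}$, I would first note from (\ref{eq:oct-8-1}) that $\d_{n,k}$ ends with $(\overline{\tau_1\ldots\tau_{2^n}}^+)^\f=\overline{\al(q_n)}$; hence once $\d_{n,k}\in\us_{q_{n+1}}$ is established it follows that $\d_{n,k}\in\us_{q_{n+1}}\setminus\us_{q_n}=\us_{q_{n+1}}^*$. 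So, by Lemma~\ref{lem:chara-uq} and $M=1$, the whole task reduces to verifying
\[
\overline{\al(q_{n+1})}\prec\si^j(\d_{n,k})\prec\al(q_{n+1})\qquad\text{for all }j\ge 1,
\]
where $\al(q_{n+1})=(\tau_1\ldots\tau_{2^{n+1}}^-)^\f$ begins with $\tau_1\ldots\tau_{2^{n+1}-1}$.

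Next I would set up the same block notation as in Lemma~\ref{lem:cons-c-nk}: put $A=\tau_1\ldots\tau_{2^{n-1}}$, $B=\overline{\tau_1\ldots\tau_{2^{n-1}}}^+=\tau_{2^{n-1}+1}\ldots\tau_{2^n}$, $C=\overline{\tau_1\ldots\tau_{2^{n-2}}}^+=\tau_{2^{n-2}+1}\ldots\tau_{2^{n-1}}$, $D=\overline{\tau_1\ldots\tau_{2^n}}^+=\tau_{2^n+1}\ldots\tau_{2^{n+1}}$, where the second equality in each case is Lemma~\ref{lem:thue-morse}(i). This gives $\d_{n,k}=AB^kCD^\f$ together with the identities $AB=\tau_1\ldots\tau_{2^n}$ and $\tau_1\ldots\tau_{2^{n-2}}C=A$. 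I would then split according to where $j$ lands:
\begin{itemize}
\item[{\rm(I)}] $1\le j<2^{n-1}$, inside the leading block $A$;
\item[{\rm(II)}] $2^{n-1}\le j<(k+1)2^{n-1}$, inside the $B^k$-block;
\item[{\rm(III)}] $(k+1)2^{n-1}\le j<(k+1)2^{n-1}+2^{n-2}$, inside the new block $C$;
\item[{\rm(IV)}] $j\ge(k+1)2^{n-1}+2^{n-2}$, inside the periodic tail $D^\f=\overline{\al(q_n)}$.
\end{itemize}
Cases (I), (II) and (IV) go through verbatim as Cases (I), (II) and (III) of Lemma~\ref{lem:cons-c-nk}: the required inequalities are read off from Lemma~\ref{lem:thue-morse}(ii) and from (\ref{eq:kong-2}) (with $n-1$ replaced by $n$ in (IV)), and the fact that the word following $B^k$ is now $CD^\f$ rather than $D^\f$ is immaterial whenever those comparisons are already strict inside an $A$- or $B$-fragment.

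The only genuinely new work is Case (III). Writing $j=(k+1)2^{n-1}+i$ with $0\le i<2^{n-2}$, one has $\si^j(\d_{n,k})=\tau_{2^{n-2}+i+1}\ldots\tau_{2^{n-1}}\,D^\f$. For the lower bound, the left inequality of Lemma~\ref{lem:thue-morse}(ii) applied with $2^{n-1}$ gives $\overline{\tau_1\ldots\tau_{2^{n-2}-i}}\prec\tau_{2^{n-2}+i+1}\ldots\tau_{2^{n-1}}$, and since $\overline{\al(q_{n+1})}$ begins with $\overline{\tau_1\ldots\tau_{2^{n-2}-i}}$ this already forces $\si^j(\d_{n,k})\succ\overline{\al(q_{n+1})}$. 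For the upper bound, the right inequality of Lemma~\ref{lem:thue-morse}(ii) gives $\tau_{2^{n-2}+i+1}\ldots\tau_{2^{n-1}}\lle\tau_1\ldots\tau_{2^{n-2}-i}$; if it is strict we are done, and if it is an equality I would continue the comparison into the tail, matching $D=\tau_{2^n+1}\ldots\tau_{2^{n+1}}$ against the continuation $\tau_{2^{n-2}-i+1}\tau_{2^{n-2}-i+2}\ldots$ of $\al(q_{n+1})$ and closing it with one further application of Lemma~\ref{lem:thue-morse}(ii).

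The hard part, such as it is, will be precisely these equality cases — those occurring at ``border lengths'' of the Thue--Morse prefixes, both in Case (III) and at the block boundaries in Cases (I)--(II) — where the lexicographic comparison is not decided inside a single block and has to be chased across a boundary into the periodic tail, keeping track of which block is being entered and of the sign of the resulting comparison. Everything else is a faithful repetition of the estimates already carried out for $\c_{n,k}$, and no combinatorial input beyond Lemma~\ref{lem:thue-morse} is required.
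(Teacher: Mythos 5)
Your overall strategy --- reducing to the lexicographic inequalities $\overline{\al(q_{n+1})}\prec\si^j(\d_{n,k})\prec\al(q_{n+1})$ via Lemma~\ref{lem:chara-uq}, then splitting $j$ according to which block of $\d_{n,k}=AB^kCD^\f$ it enters --- is also the paper's strategy, and your Cases (I), (II), (IV) reuse Lemma~\ref{lem:cons-c-nk} essentially as the paper does. The genuine difference is how the inserted block $C=\overline{\tau_1\ldots\tau_{2^{n-2}}}^+$ is neutralised. You attack it head-on as a new Case (III) and then (correctly) anticipate having to chase equality instances of Lemma~\ref{lem:thue-morse}(ii) across the $C\,|\,D$ boundary into the periodic tail; that boundary-chasing is the whole source of difficulty in your write-up. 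The paper sidesteps it with one short computation: by Lemma~\ref{lem:thue-morse}(i), applied twice, the last $2^{n-2}$ letters of $B$ are $\tau_1\ldots\tau_{2^{n-2}}$, hence
\[
\si^{k2^{n-1}+2^{n-2}}(\d_{n,k})
=\tau_1\ldots\tau_{2^{n-2}}\,\overline{\tau_1\ldots\tau_{2^{n-2}}}^+\,\bigl(\overline{\tau_1\ldots\tau_{2^{n-1}}}\,\tau_1\ldots\tau_{2^{n-1}}\bigr)^\f
=\bigl(\tau_1\ldots\tau_{2^{n-1}}\,\overline{\tau_1\ldots\tau_{2^{n-1}}}\bigr)^\f=\al(q_n).
\]
From index $k2^{n-1}+2^{n-2}$ onward every shift of $\d_{n,k}$ is therefore a shift of $\al(q_n)$, whose shift-orbit is the same as that of $\overline{\al(q_n)}=D^\f$; so your Cases (III) and (IV) collapse into the already-treated Case (III) of Lemma~\ref{lem:cons-c-nk} (with $n-1$ replaced by $n$ in (\ref{eq:kong-2})), and no boundary-chasing is required. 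In short, your proof is sound in spirit but does by hand precisely what the paper dissolves with a single block identity.
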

\begin{proof}
  It is clear that $\d_{n,k}$ ends with $\overline{\al(q_n)}$. Then by Lemma \ref{lem:chara-uq} it suffices to prove
  \begin{equation}
    \label{eq:d-nk-1}
    \overline{\al(q_{n+1})}\prec \si^j(\d_{n,k})\prec \al(q_{n+1})\quad\forall j\ge 1.
  \end{equation}
  Since $\al(q_{n+1})$ begins with $\tau_1\ldots\tau_{2^n}$, by {Cases (I) and (II) in} the proof of Lemma \ref{lem:cons-c-nk} we only need to verify (\ref{eq:d-nk-1}) for {$j\ge k 2^{n-1}+2^{n-2}$}.
  Observe by Lemma \ref{lem:thue-morse} (i) that
  \begin{align*}
  \si^{k 2^{n-1}+2^{n-2}}(\d_{n,k})&=\tau_1\ldots\tau_{2^{n-2}}\overline{\tau_1\ldots\tau_{2^{n-2}}}^+(\overline{\tau_1\ldots\tau_{2^{n-1}}}\tau_1\ldots\tau_{2^{n-1}})^\f\\
  &=(\tau_1\ldots\tau_{2^{n-1}}\overline{\tau_1\ldots\tau_{2^{n-1}}})^\f{=\al(q_n)}.
  \end{align*}
  Then {by the same argument as in the proof of Case III in Lemma \ref{lem:cons-c-nk} it follows that  (\ref{eq:d-nk-1}) holds} for all $j\ge k 2^{n-1}+2^{n-2}$. This completes the proof.
\end{proof}

Clearly,
\[\d_{n,k}\searrow \d_{n,\f}:=\tau_1\ldots \tau_{2^{n-1}}(\overline{\tau_1\ldots\tau_{2^{n-1}}}^+)^\f=\c_{n,\f}\quad\textrm{as }k\to\f.\]
 Now we are ready to prove Proposition \ref{prop:iso-bigger}.

\begin{proof}[Proof of Proposition \ref{prop:iso-bigger}]
 Let $M=1$. {By Lemma  \ref{lem:cons-c-nk} and Proposition \ref{prop:isolate-point}} it follows that
  \begin{equation}\label{eq:iso-1}
  X_{iso}\supset\bigcup_{n=1}^\f\bigcup_{k=1}^\f\bigcup_{p\in(q_n, q_{n+1})}\pi_p(\c_{n,k})=\bigcup_{n=1}^\f\bigcup_{k=1}^\f(\pi_{q_{n+1}}(\c_{n,k}), \pi_{q_n}(\c_{n,k})),
  \end{equation}
  where the bases $q_n\in\vb$ are defined as in (\ref{eq:oct-8-1}). Similarly, by Lemma \ref{lem:cons-d-nk} and Proposition \ref{prop:isolate-point} it follows that
  \begin{equation}\label{eq:iso-2}
  X_{iso}\supset\bigcup_{n=2}^\f\bigcup_{k=1}^\f\bigcup_{p\in(q_n, q_{n+1})}\pi_p(\d_{n,k})=\bigcup_{n=2}^\f\bigcup_{k=1}^\f(\pi_{q_{n+1}}(\d_{n,k}), \pi_{q_n}(\d_{n,k})).
  \end{equation}
  In the following we will show that the unions in (\ref{eq:iso-1}) and (\ref{eq:iso-2}) are sufficient to cover  $(1, x_G)$.

  First we prove that the {union} in (\ref{eq:iso-1}) covers $(1, x_G)$ up to a countable set.
{By} (\ref{eq:oct-8-1}) and Lemma \ref{lem:thue-morse} (i) it follows that
\begin{equation}
  \label{eq:oct-8-5}
  \begin{split}
    \pi_{q_{n+1}}(\mathbf c_{n,k+1})&=\pi_{q_{n+1}}(\tau_{1}\ldots\tau_{2^{n-1}}(\overline{\tau_1\ldots \tau_{2^{n-1}}}^+)^{k+1}\overline{\tau_1\ldots\tau_{2^{n-1}}}(\tau_1\ldots \tau_{2^{n-1}}\overline{\tau_1\ldots\tau_{2^{n-1}}})^\f)\\
    &<\pi_{q_{n+1}}(\tau_1\ldots\tau_{2^{n-1}}(\overline{\tau_1\ldots\tau_{2^{n-1}}}^+)^{k+2}0^\f)\\
    &=\pi_{q_{n+1}}(\tau_1\ldots\tau_{2^n} 0^{2^{n-1}k}\,\overline{\tau_1\ldots\tau_{2^{n-1}}}^+ 0^\f)+\pi_{q_{n+1}}(0^{2^n}(\overline{\tau_1\ldots\tau_{2^{n-1}}}^+)^k 0^\f).
  \end{split}
\end{equation}
On the other hand, by (\ref{eq:oct-8-1})   we obtain
\begin{equation}
  \label{eq:oct-8-6}
  \begin{split}
  \pi_{q_n}(\mathbf c_{n,k})&=\pi_{q_n}(\tau_1\ldots\tau_{2^{n-1}}\overline{\tau_1\ldots\tau_{2^{n-1}}}^+ 0^\f)+\pi_{q_n}(0^{2^n}(\overline{\tau_1\ldots\tau_{2^{n-1}}}^+)^k 0^\f)\\
  &=1+\pi_{q_n}(0^{2^n}(\overline{\tau_1\ldots\tau_{2^{n-1}}}^+)^k 0^\f).
\end{split}\end{equation}
Since
$
\pi_{q_{n+1}}(\tau_1\ldots\tau_{2^n} 0^{2^{n-1}k}\overline{\tau_1\ldots\tau_{2^{n-1}}}^+ 0^\f)<1
$
for any $n\ge 1, k\ge 1$,
  by (\ref{eq:oct-8-5}) and (\ref{eq:oct-8-6}) it follows that  $\pi_{q_{n+1}}(\mathbf c_{n,k+1})<\pi_{q_n}(\mathbf c_{n,k})$. Therefore, the intervals $J_k:=(\pi_{q_{n+1}}(\c_{n,k}),\pi_{q_n}(\c_{n,k}) )$ {with $k\ge 1$} are pairwise {overlapping}.
  So,
  \begin{equation}\label{eq:kk-1}
  \bigcup_{k=1}^\f (\pi_{q_{n+1}}(\c_{n,k}),\pi_{q_n}(\c_{n,k}) )=(\pi_{q_{n+1}}(\c_{n,1}), \pi_{q_n}(\c_{n,\f})),
  \end{equation}
  where {we recall that} $\c_{n,\f}=\tau_1\ldots\tau_{2^{n-1}}(\overline{\tau_1\ldots \tau_{2^{n-1}}}^+)^\f$. Note by Lemma \ref{lem:thue-morse} (i) that
  \begin{equation}\label{eq:kong-1}
  \c_{n,1}=\tau_1\ldots\tau_{2^{n-1}}\overline{\tau_1\ldots \tau_{2^{n-1}}}^+(\overline{\tau_1\ldots\tau_{2^n}}^+)^\f=\tau_1\ldots\tau_{2^n}(\overline{\tau_1\ldots\tau_{2^n}}^+)^\f=\c_{n+1,\f}.
  \end{equation}
 Write $z_n:=\pi_{q_n}(\c_{n,\f})$. Then by (\ref{eq:iso-1}), (\ref{eq:kk-1}) {and (\ref{eq:kong-1})} it follows that
 \[
 X_{iso}\supset\bigcup_{n=1}^\f(z_{n+1}, z_n).
 \]
 Observe that $z_1=\pi_{q_1}(\c_{1,\f})=\pi_{q_1}(1^\f)=x_G$. Furthermore, since $q_n\nearrow q_{KL}$ and $\c_{n,\f}\searrow\tau_1\tau_2\ldots$ as $n\to\f$, we have
 \[z_n=\pi_{q_n}(\c_{n,\f})\searrow  \pi_{q_{KL}}(\tau_1\tau_2\ldots)=1\quad \textrm{as }n\to\f.\]
 Therefore,
 \[
 X_{iso}\supset(1, x_G)\setminus\set{z_n: n\ge 2}.
 \]

  To complete the proof it remains to prove   $z_n\in X_{iso}$ for all $n\ge 2$. We will show that all of these points $z_n$ belong  to the {union} in (\ref{eq:iso-2}). {Recall} that for $n\ge 2$ the sequence   $\d_{n, k}$ decreases to $\d_{n, \f}=\c_{n,\f}$ as $k\to\f$. Since $\d_{n, k}$ and $\c_{n,\f}$ are both quasi-greedy $q_{n}$-expansions, by Lemma \ref{l21} {(i)} it follows that
  \begin{equation}\label{eq:iso-3}
\pi_{q_{n}}(\d_{n,k})>\pi_{q_{n}}({\c_{n,\f}})=z_{n}\quad\forall k\ge 1.
\end{equation}
On the other hand, since
\[
\lim_{k\to\f}\pi_{q_{n+1}}(\d_{n,k})=\pi_{q_{n+1}}(\c_{n,\f})<\pi_{q_{n}}(\c_{n,\f})=z_{n},
\]
by (\ref{eq:iso-3}) there exists $K\in\N$ such that for all $k\ge K$ we have
\begin{equation}\label{eq:june-8-4}
z_{n}\in(\pi_{q_{n+1}}(\d_{n,k}),\pi_{q_{n}}(\d_{n,k}))\subset X_{iso}\quad\forall n\ge 2.
\end{equation}
This completes the proof.
\end{proof}
{\begin{remark}
 By Proposition \ref{prop:isolate-point} and (\ref{eq:june-8-4}) it follows that for any
  $z_n=\pi_{q_n}(\c_{n,\f})$ with $n\ge 2$
and for any $k\ge K$ the equation
\[
(\mathbf d_{n,k})_{p_k}=z_n
\]
determines a unique $p_k\in(q_n, q_{n+1})$, which is isolated in $\ub(z_n)$. This means that for each $n\ge 2$ the
set $\ub(z_n)$ contains infinitely many isolated points.
\end{remark}}
\begin{proof}[Proof of Theorem \ref{main:iso-points}]
The theorem follows by Lemma \ref{lem:ux-dense-01} and Proposition \ref{prop:iso-bigger}.
\end{proof}

\section{Final remarks and questions}
At the end of this paper we pose some questions. In view of Theorem \ref{main:local-dim} it is natural to ask the following question.

 {\bf Question 1}: Does Theorem \ref{main:local-dim} hold for any  $x>0$ and any ${q\in\overline{\ub}}$?

 By Theorem \ref{main:dim-devil} and Theorem \ref{th:cont-uq} it follows that the bifurcation set of $\phi: x\mapsto \dim_H\us(x)$ can be easily obtained from the bifurcation set of $\psi: q\mapsto\dim_H\us_q$. More precisely, $x$ is a bifurcation point of $\phi$ if and only if $q_x$ is a bifurcation point of $\psi$. The same correspondence holds for the plateaus of $\phi$ and $\psi$. Motivated by Lemma \ref{lem:decreasing-u(x)} and the works studied in \cite{DeVries_Komornik_2008, Allaart-Baker-Kong-17} we ask the following question.

{\bf Question 2}: Can we describe the bifurcation sets of the set-valued map $x\mapsto\us(x)$? In other words, can we describe the following sets
\begin{align*}
&E_1=\set{x:\us(y)\ne\us(x)~\forall y>x}\quad\textrm{and}\quad
E_2=\set{x: \dim_H(\us(x)\setminus\us(y))>0~\forall y>x}?
\end{align*}

Although we can calculate the Hausdorff dimension of $\us(x)$ as in   Theorem \ref{main:critical-point},   we are not able to {determine} the Hausdorff dimension of $\ub(x)$ for $x\in(1, x_{KL})$.

{\bf Question 3}: What is the Hausdorff dimension of $\ub(x)$ for $x\in(1, x_{KL})$?

Finally, for the isolated points of $\ub(x)$ we have shown in Theorem \ref{main:iso-points} that $X_{iso}$ is dense in $(0,\f)$ for $M=1$. Our proof does not work for $M\ge 2$ in the interval $(1, x_G)$.

{\bf Question 4}: Is it true that $X_{iso}$ is dense in $(0,\f)$ for any $M\ge 2$? We   conjecture that
\[\ub(x) \textrm{ contains isolated points}\quad\Longleftrightarrow\quad   x\in(0,1)\cup(1,\f).\]

Up to now we know very little about the topological structure of $\ub(x)$. Clearly, for $x\ge x_G$ the set $\ub(x)=\set{q_x}$ is a singleton.

{\bf Question 5}: When is $\ub(x)$ a closed set for $x\in(0, x_G)$?

\section*{Acknowledgements}
{The authors thank the anonymous referee for many useful suggestions, especially for the simplification of the proof of Theorem \ref{main:local-dim}.}
D.~Kong thanks Pieter Allaart for providing the Maple codes of Figure \ref{Figure:1}. He was supported by   NSFC   No.~11971079 and the Fundamental
and Frontier Research Project of Chongqing No.~cstc2019jcyj-msxmX0338 {and No.~cx2019067}. W.~Li was
supported by  NSFC No.~11671147, 11571144  and Science and Technology Commission of Shanghai Municipality (STCSM)  No.~13dz2260400. F.~L\"u was supported by NSFC No.~11601358.

%

\end{document}